\documentclass[11pt,twoside]{article}
\usepackage{amssymb,amsmath,amsthm,amsfonts,mathrsfs,hyperref}
\usepackage{times}
\usepackage{enumerate}
\usepackage{cite,titletoc}
\usepackage{color}
\usepackage[toc,page,title,titletoc,header]{appendix}


\pagestyle{myheadings}

\textwidth=160mm
\textheight=220mm
\oddsidemargin=0mm
\evensidemargin=0mm
\headheight=10mm
\headsep=3mm
\footskip=4mm
\topmargin=0mm

\footskip=0pt
\footnotesep=2pt

\allowdisplaybreaks


\def\cC{\mathcal C}

\def\cF{\mathcal F}

\def\cI{\mathcal I}

\def\cK{\mathcal K}
\def\cL{\mathcal L}

\def\cN{\mathcal N}

\def\cQ{\mathcal Q}

\def\cS{\mathcal S}
\def\cT{\mathcal T}
\def\cU{\mathcal U}

\def\cX{\mathcal X}
\def\cY{\mathcal Y}

\def\sF{\mathscr F}
\def\sL{\mathscr L}
\def\sT{\mathscr T}
\def\N{\mathop{\mathbb N\kern 0pt}\nolimits}
\def\Z{\mathop{\mathbb Z\kern 0pt}\nolimits}
\def\Q{\mathop{\mathbb Q\kern 0pt}\nolimits}
\def\R{\mathop{\mathbb R\kern 0pt}\nolimits}
\def\T{\mathop{\mathbb T\kern 0pt}\nolimits}
\def\C{\mathop{\mathbb C\kern 0pt}\nolimits}

\def\ds{\displaystyle}

\def\supp{\mathop{\rm supp}\nolimits}

\def\p{\partial}

\def\ve{\varepsilon}

\def\ls{\lesssim}

\newcommand{\w}[1]{\langle {#1} \rangle}
\def\id{\textbf{1}}



\hypersetup{colorlinks=true,linkcolor=blue,citecolor=red,urlcolor=cyan}

\theoremstyle{plain}
\newtheorem{theorem}{Theorem}[section]
\newtheorem{proposition}[theorem]{Proposition}
\newtheorem{lemma}[theorem]{Lemma}

\theoremstyle{definition}
\newtheorem{definition}[theorem]{Definition}

\newtheorem{remark}{Remark}[section]

\numberwithin{equation}{section}


\title{Long time solutions of quasilinear Klein-Gordon equations with small weakly decaying initial data}

\author{Hou Fei$^{1,*}$ \qquad
  Yin Huicheng$^{2, } $\footnote{Hou Fei (\texttt{fhou$@$nju.edu.cn}) and
    Yin Huicheng (\texttt{huicheng$@$nju.edu.cn}, \texttt{05407$@$njnu.edu.cn}) are supported by the NSFC (No.11731007, No.12101304).
    Hou Fei is also supported by the NSF of Jiangsu Province (No. BK20210170) and Shuangchuang Program of Jiangsu Province (JSSCBS20210008).
    In addition, Yin Huicheng is supported by the National key research and development program of China (No.2020YFA0713803).}\\
    [12pt] {\small 1. Department of Mathematics, Nanjing University, Nanjing 210093, China}\\
  {\small 2. School of Mathematical Sciences and Mathematical Institute, }\\
  {\small Nanjing Normal University, Nanjing 210023, China}}


\begin{document}

\date{}
\maketitle
\thispagestyle{empty}

\begin{abstract}
It is well known that for the quasilinear Klein-Gordon equation with quadratic nonlinearity and sufficiently decaying small initial data,
there exists a global smooth solution if the space dimensions $d\ge2$.
When the initial data are of size $\varepsilon>0$ in the Sobolev space, for the semilinear Klein-Gordon equation
satisfying the null condition, the authors in the article (J.-M. Delort, Daoyuan Fang, Almost global existence for solutions of semilinear Klein-Gordon equations with small weakly decaying Cauchy data, Comm. Partial Differential Equations 25 (2000), no. 11-12, 2119--2169)
prove that the solution exists in time $[0,T_\varepsilon)$ with $T_\varepsilon\ge Ce^{C\varepsilon^{-\mu}}$ ($\mu=1$ if $d\ge3$,
$\mu=2/3$ if $d=2$).
In the present paper, we will focus on the general quasilinear Klein-Gordon equation without the null condition and
further show that the existence time of the solution can be improved to $T_\varepsilon=+\infty$ if $d\ge3$ and $T_\varepsilon\ge e^{C\varepsilon^{-2}}$ if $d=2$. In addition, for $d=2$ and any fixed number $\alpha>0$, if the weighted $L^2$ norm of the initial data
with the weight $(1+|x|)^\alpha$ is small, then the solution exists globally and scatters to a free solution.
The arguments are based on the introduction of a good unknown, the Strichartz estimate, the weighted $L^2$-norm
estimate and the resonance analysis.

\vskip 0.2 true cm

\noindent
\textbf{Keywords.} Quasilinear Klein-Gordon equation, global solution, good unknown, Strichartz estimate,

\qquad \quad weighted $L^2$-norm, resonance analysis

\vskip 0.2 true cm
\noindent
\textbf{2020 Mathematical Subject Classification.}  35L70, 35L72.
\end{abstract}

\vskip 0.5 true cm


\section{Introduction}
In the paper, we are concerned with the Cauchy problem of the quasilinear Klein-Gordon equation
\begin{equation}\label{KG}
\left\{
\begin{aligned}
&\Box u+u=F(u,\p u,\p^2u),\quad(t,x)\in[1,\infty)\times\R^d,\\
&(u,\p_tu)(1,x)=\ve(u_0,u_1)(x),
\end{aligned}
\right.
\end{equation}
where $\Box=\p_t^2-\Delta$, $\Delta=\sum_{j=1}^d\p_j^2$, $x=(x^1,\cdots,x^d)\in\R^d$, $d\ge2$, $\p_0=\p_t$, $\p_j=\p_{x^j}$ for $j=1,\cdots,d$, $\p_x=(\p_1,\cdots,\p_n)$, $\p=(\p_0,\p_x)$, and $\ve>0$ is sufficiently small.
The smooth nonlinearity $F(u,\p u,\p^2u)$ is quadratic and is linear in $\p^2u$.

Our main results can be stated as follows.

\begin{theorem}\label{thm1}
Let $d\ge2$ and $N\ge2d+[d/2]+6$.
There are two positive constants $\ve_0$ and $\kappa$ such that for any $\ve\in(0,\ve_0)$, if $(u_0,u_1)$ satisfies
\begin{equation}\label{initial:data}
\|u_0\|_{H^{N+1}(\R^d)}+\|u_1\|_{H^N(\R^d)}\le1,
\end{equation}
then \eqref{KG} admits a unique solution $u\in C([1,T_\ve),H^{N+1}(\R^d))\cap C^1([1,T_\ve),H^N(\R^d))$, where $T_\ve=\infty$ if $d\ge3$ and $T_\ve=e^{\kappa/\ve^2}$ if $d=2$.
\end{theorem}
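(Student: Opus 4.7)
The plan is to prove Theorem~\ref{thm1} via a bootstrap argument in which a high-order Sobolev energy, a Strichartz-type space-time norm, and an auxiliary weighted $L^2$ norm are propagated simultaneously, after first removing the worst quasilinear and quadratic interactions through the construction of a good unknown.

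First, I would reduce the quasilinear problem to an essentially semilinear one. Writing $F=\sum_{ij}a^{ij}(u,\p u)\p_i\p_j u+G(u,\p u)$, the top-order quasilinear part is absorbed into the d'Alembertian by a paralinearization, and one introduces a modified unknown $v=u+B(u,u)$, where $B$ is an explicit bilinear Fourier multiplier built from the Klein-Gordon phase $\phi(\xi)=\sqrt{1+|\xi|^2}$. The good unknown $v$ satisfies $\Box v+v=\tilde F(v,\p v)$, where the quadratic part of $\tilde F$ is supported on space-time resonances (the common zero set of $\phi(\xi)-\phi(\eta)-\phi(\xi-\eta)$ and its $\eta$-gradient), while all non-resonant quadratic interactions are converted into cubic and higher-order remainders. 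For $\ve$ small the change of variables is invertible and comparable to the identity on $H^{N+1}$.

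Second, set up and run the bootstrap. Let $E_N(t)$ denote the $H^{N+1}\times H^N$ energy of $v$ and $W_\alpha(t)$ an $|x|^\alpha$-weighted $L^2$ norm of the profile of $v$. Since the data is not weighted, $W_\alpha$ is allowed to grow, but only in a way consistent with the linear Klein-Gordon evolution. The ansatz is $E_N(t)\le C\ve\langle t\rangle^{C\ve^2}$ together with a compatible growth bound on $W_\alpha(t)$. Interpolating the Strichartz estimates for the Klein-Gordon flow against the Sobolev bound $E_N$ and the weighted bound $W_\alpha$ yields the dispersive control $\|\p u(t)\|_{L^\infty}\ls\ve\langle t\rangle^{-d/2}\log\langle t\rangle$. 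The energy identity for the good-unknown equation reads schematically
\[
\tfrac{d}{dt}E_N(t)^2\ls \|\p u\|_{L^\infty}E_N(t)^2+\text{(cubic and quartic remainders)}.
\]
When $d\ge3$ the factor $\langle t\rangle^{-d/2}$ is integrable in time, Gr\"onwall gives a uniform energy bound, and $T_\ve=\infty$ follows. When $d=2$ one obtains $E_N(t)\ls\ve\exp(C\ve^2\log t)$, which closes the ansatz as long as $C\ve^2\log t\ll1$, giving exactly $T_\ve\ge e^{\kappa/\ve^2}$.

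I expect the main obstacle to lie in controlling the resonant quadratic interactions in $d=2$, where the pointwise decay $\langle t\rangle^{-1}$ alone is not integrable and cannot, by itself, close the energy. Here the weighted $L^2$ norm $W_\alpha$ enters decisively: multiplication by $|x|^\alpha$ in physical space corresponds to a fractional derivative hitting the Fourier-transformed profile, and on the space-time resonant set this derivative provides the missing $\langle t\rangle^{-\delta}$ of decay through a bilinear $L^2$ resonance bound. A secondary difficulty is the derivative loss inherent to a quasilinear normal form; this is handled by constructing the bilinear operator $B$ paradifferentially, so that high--low interactions lose no derivatives and high--high interactions remain tame at the level of $H^{N+1}$ through symbol estimates for the denominator $\phi(\xi)-\phi(\eta)-\phi(\xi-\eta)$ away from its zero set.
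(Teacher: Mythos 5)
Your proposal has two genuine gaps, and they touch precisely the point the paper is built around.

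\textbf{The weighted norm $W_\alpha$ cannot be initialized.} In Theorem~\ref{thm1} the data is only in $H^{N+1}(\R^d)\times H^N(\R^d)$ with no spatial weight; consequently $\|\w{x}^\alpha V(1)\|_{L^2}$ is in general infinite, and the profile never acquires any weighted decay along the flow. Your claim that ``$W_\alpha$ is allowed to grow, but only in a way consistent with the linear evolution'' does not save this: for the free Klein-Gordon flow the profile's weighted norm is time-independent, so an initially infinite quantity stays infinite. The downstream consequence is that your interpolation step $\|\p u(t)\|_{L^\infty}\lesssim\ve\langle t\rangle^{-d/2}\log\langle t\rangle$ is exactly the estimate the paper explains \emph{cannot} be obtained for weakly decaying data (see the discussion around \eqref{intro:disp}), and with it your entire Gr\"onwall loop fails. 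The actual proof bypasses pointwise decay altogether by using the $L^2_tL^\infty_x$ Strichartz estimate \eqref{Strichartz}, which costs only $\|P_kU\|_{L^2_x}$ on the right and incurs the mild price $c_d(t)$ ($=1$ for $d\ge3$, $=\ln^{1/2}t$ for $d=2$). That substitution of $L^2_tL^\infty_x$ control for $L^\infty_tL^\infty_x$ decay is the central idea of the paper, and it is missing from your argument. The weighted norm only enters in Section 5, for the strictly stronger hypotheses of Theorem~\ref{thm2}.

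\textbf{The lifespan arithmetic for $d=2$ is off by one power of $\ve$.} Your schematic energy identity $\frac{d}{dt}E_N^2\lesssim\|\p u\|_{L^\infty}E_N^2$ is a \emph{cubic} differential inequality: with $\|\p u\|_{L^\infty}\lesssim\ve t^{-1}$ and $E_N\lesssim\ve$, Gr\"onwall gives $E_N(t)\lesssim\ve\,t^{C\ve}$, which stays of size $\ve$ only up to $T_\ve\sim e^{c/\ve}$, not $e^{\kappa/\ve^2}$. To reach $e^{\kappa/\ve^2}$ one needs the effective energy increment to be \emph{quartic}. In the paper this is achieved not by a Shatah-type change of unknown $v=u+B(u,u)$ (which, as you note, threatens derivative loss), but by applying the normal-form integration by parts in $s$ directly to the trilinear energy functionals $\int_1^tE_\cS\,ds$, $\int_1^tE_\cQ\,ds$ (Lemmas~\ref{lem:sem:energy} and~\ref{lem:qua:energy}); this produces the bound in Proposition~\ref{prop:high:energy}, whose integral term is quadratic in $\sum_k2^{k\cdot}\|P_kU\|_{L^\infty}$ and quadratic in $\|U\|_{H^N}$. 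Combined with the paralinearization good unknown $\cU=\p_tu-iT_{Q^{0j}\zeta_j}u+iT_{\sqrt{1+q}}\Lambda u$ (a symbol-level transformation, not a bilinear Fourier normal form) and two applications of Strichartz, the increment becomes $\ve_1^3c_d^2(t)$, which closes exactly on the stated time interval. Your plan would need both the $L^2_tL^\infty_x$ mechanism and the energy-functional normal form to recover the theorem as stated.
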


\begin{theorem}\label{thm2}
Assume $d=2$, $N\ge12$ and $\alpha\in(0,1/5)$. There is a positive constant $\ve_0$ such that for any $\ve\in(0,\ve_0)$, if $(u_0,u_1)$ satisfies
\begin{equation}\label{initial:d=2}
\|u_0\|_{H^{N+1}(\R^2)}+\|u_1\|_{H^{N}(\R^2)}
+\|\w{x}^\alpha\Lambda u_0\|_{L^2(\R^2)}+\|\w{x}^\alpha u_1\|_{L^2(\R^2)}\le1,
\end{equation}
where $\w{x}:=\sqrt{1+|x|^2}$, $\Lambda:=(1-\Delta)^{1/2}$, then \eqref{KG} has a unique global solution $u\in C([1,\infty),H^{N+1}(\R^2))$
$\cap C^1([1,\infty),H^{N}(\R^2))$.
In addition, the solution $u$ scatters to a free solution: there exists $(u_0^\infty,u_1^\infty)\in H^1(\R^2)\times L^2(\R^2)$,
and denote by $u^\infty$ the solution to the linear Klein-Gordon equation with initial data $(u_0^\infty,u_1^\infty)$ at time $t=1$,
then
\begin{equation}\label{scatter}
\lim_{t\rightarrow+\infty}\sum_{j=0}^1\|\p_t^j(u(t)-u^\infty(t))\|_{H^{1-j}}=0.
\end{equation}
\end{theorem}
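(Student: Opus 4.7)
The plan is to upgrade Theorem~\ref{thm1} from almost global to global existence in two dimensions by exploiting the weighted data $\w{x}^\alpha$, which by Plancherel amounts to a fractional Sobolev bound for the profile in frequency space. I would run a bootstrap on three quantities: the energy $\|\Lambda u\|_{H^N}+\|\p_tu\|_{H^N}\ls\ve$, the pointwise norm $\|\p u\|_{L^\infty}\ls\ve t^{-1}$ at the sharp Klein-Gordon rate, and the weighted $L^2$ control $\|\w{D_\xi}^\alpha \hat{\tilde f}\|_{L^2}\ls\ve\w{t}^\delta$ for a small $\delta=\delta(\alpha)\ll\alpha$, where $\tilde f$ is the profile of a good unknown constructed below.

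I would first reduce to a half-wave equation by setting $w:=\Lambda u-i\p_tu$, so that $(\p_t+i\Lambda)w=\cN(u,\p u,\p^2u)$ with $\cN$ quadratic, and pass to the profile $f:=e^{it\Lambda}w$. The quasilinear top-derivative contributions are absorbed by an Alinhac-type paralinearization and the pure quadratic non-resonant part removed by a Shatah normal form. This produces a good unknown $\tilde f=f+B(f,f)$ whose profile equation reads
\begin{equation*}
\p_t\hat{\tilde f}(t,\xi)=\int_{\R^2}e^{it\phi(\xi,\eta)}m(\xi,\eta)\hat f(\eta)\hat f(\xi-\eta)\,d\eta+\cC(t,\xi),
\end{equation*}
where $\phi(\xi,\eta)=\w{\xi}\pm\w{\eta}\pm\w{\xi-\eta}$ localizes $m$ near the Klein-Gordon space-time resonance set and $\cC$ is effectively cubic. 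In $d=2$ a cubic term yields integrable $L^2$-rates in time, which is what finally converts almost global into global existence.

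The energy estimate then closes by Gronwall using $\|\p u\|_{L^\infty}\ls\ve t^{-1}$ together with a Strichartz bound on the paradifferential commutator. The pointwise decay follows from stationary phase for $e^{it\Lambda}\tilde f$: the unique critical point of $x\cdot\xi+t\w{\xi}$ requires a modulus of continuity of order $\alpha$ for $\hat{\tilde f}$, which the weighted bound provides after interpolation with the high Sobolev norm. The crux of the argument is the weighted $L^2$ estimate: applying $\w{D_\xi}^\alpha$ to the profile equation one loses $t^\alpha$ from hitting the oscillatory factor, but away from the resonance set one recovers $(t|\nabla_\xi\phi|)^{-\infty}$ by integration by parts in $\eta$, while on the resonance set the Klein-Gordon resonances are quadratic and transverse, contributing only an integrable $t^{-1+\alpha}$ after isolating the stationary direction. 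Strict improvement in each bootstrap constant then yields the global bounds.

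For scattering, the estimates above give $\p_t\tilde f\in L^1([1,\infty);L^2(\R^2))$, so $\tilde f(t)\to f_\infty$ in $L^2$. Since $\|B(f,f)\|_{L^2}\ls\ve^2 t^{-1}\to 0$, the original profile $f(t)$ converges in $L^2$ as well, and defining $(u_0^\infty,u_1^\infty)\in H^1\times L^2$ as the linear Klein-Gordon data at $t=1$ corresponding to this limit yields \eqref{scatter}. The principal obstacle is the weighted estimate with such a weak weight: $\alpha$ is far below the vector-field threshold, so $\w{D_\xi}^\alpha$ must be distributed through a bilinear oscillatory integral with only partial smoothing, and the precise restriction $\alpha<1/5$ emerges from interpolating the small-$\alpha$ weighted bound against the high Sobolev bound so as to extract an integrable $L^\infty$ decay for the cubic remainder.
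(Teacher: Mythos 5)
The central gap is the bootstrap assumption $\|\p u\|_{L^\infty}\ls\ve t^{-1}$. With data whose only weighted control is $\w{x}^\alpha L^2$ for $\alpha<1/5$, the sharp $d=2$ Klein--Gordon rate $t^{-1}$ is not available: extracting $t^{-1}$ from a stationary--phase expansion of $e^{it\Lambda}\tilde f$ requires at minimum that $\hat{\tilde f}$ be \emph{bounded} near the critical point, i.e.\ $\tilde f\in L^1(\R^2)$, which needs roughly $\w{x}^{1+}$ weight (as the paper points out below \eqref{intro:disp}); having $\w{D_\xi}^\alpha\hat{\tilde f}\in L^2$ for small $\alpha$, equivalently $\w{x}^\alpha\tilde f\in L^2$, does not give this, and the quantitative decay the data actually supports is only $t^{-\alpha(1-n_2/N)}$ (see \eqref{local:disp}). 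With that weak rate the stationary--phase error analysis and the resonance-set integration by parts you sketch both lose their anchor, and the quadratic terms are not manifestly integrable in time even after one normal form. So the bootstrap you set up does not close.

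The paper sidesteps this entirely. Rather than trying to recover pointwise decay, it bootstraps only $\|U(t)\|_{H^N}+\|\w{x}^\alpha V(t)\|_{L^2}\le\ve_2$ and converts the weak weight directly into a \emph{uniform} Strichartz bound: combining the unweighted $L^pL^\infty$ bound \eqref{Strichartz'} with the dispersive estimate by interpolation (see \eqref{intro:Stric2}--\eqref{intro:Stric3} and \eqref{local:Stric}) yields $\|U\|_{L^2([1,t])L^\infty}\ls\ve_2$ with no $\ln^{1/2}t$ factor, which is exactly what upgrades Theorem~\ref{thm1} from $T_\ve=e^{\kappa/\ve^2}$ to global. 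The weighted $L^2$ estimate is then closed not via frequency-space integration by parts around the resonance set, but by a simultaneous dyadic decomposition in frequency ($P_k$) and in physical space ($Q_j$), exploiting the interpolated bound \eqref{BA:d=2'} $\|Q_jP_kV\|_{L^2}\ls2^{-j\alpha(1-n/N)-nk}\ve_2$ against the localized dispersive/Strichartz estimates. Your paralinearization + normal-form reduction to a good unknown and cubic profile equation is in the same spirit as the paper's $\cU$ and \eqref{profile:eqn3}, and your scattering argument from $\p_t\tilde f\in L^1_tL^2_x$ is sound once the bounds are available; but the mechanism you rely on to obtain integrable time decay is not one the weak weight can support, and replacing it with the weighted Strichartz machinery is precisely where the paper's proof diverges from yours and where the restriction $\alpha<1/5$ actually comes from.
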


\begin{remark}
We point out that Theorem \ref{thm1} extends the results in \cite{DF00} through the following three aspects:
First, the more general quasilinear case rather than only the semilinear case is studied.
Second, the requirement on the null condition of $F(u,\p u,\p^2u)$ is removed.
Third, the lifespan of the existence of the solution is improved.
\end{remark}

\begin{remark}
By the same method as in Theorem \ref{thm1}, we can get the result with the existence time $T_{\ve}=O(\frac{1}{\ve^4})$ for $d=1$, which improves the lifespan $T_{\ve}=O(\frac{1}{\ve^4 |\ln\ve|^6})$ in \cite{Delort97}, see Remarks \ref{Strichartz:d=1} and \ref{thm1pf:d=1} for details.
\end{remark}

\begin{remark}
The norm $\|\w{x}^\alpha\Lambda u_0\|_{L^2(\R^2)}$ in \eqref{initial:d=2} can be replaced by $\|\w{x}^\alpha u_0\|_{L^2(\R^2)}$.
In fact, due to the interpolation between $\|\w{x}^\alpha u_0\|_{L^2(\R^2)}\le1$ and $\|u_0\|_{H^{N+1}(\R^2)}\le1$, one has $\|\w{x}^{\frac{N\alpha}{N+1}}\Lambda u_0\|_{L^2(\R^2)}\le1$.
\end{remark}

\begin{remark}
If $\alpha\ge1/5$ in \eqref{initial:d=2}, then
$\|\w{x}^{1/6}\Lambda u_0\|_{L^2(\R^2)}+\|\w{x}^{1/6}u_1\|_{L^2(\R^2)}
\le\|\w{x}^\alpha\Lambda u_0\|_{L^2(\R^2)}+\|\w{x}^\alpha u_1\|_{L^2(\R^2)}$
holds and then the result of Theorem \ref{thm2} is true.
\end{remark}

\begin{remark}
Theorem \ref{thm1} and \ref{thm2} can be applied to extend the global perturbed
solutions of the  3D and 2D irrotational electron Euler-Poisson systems in \cite{GMP13,Guo98,IP13,LW14}
with the analogous small decaying data of \eqref{initial:data} and \eqref{initial:d=2}.
Note that the initial data in \cite{GMP13,Guo98} are required to have compact supports.
\end{remark}

\begin{remark}
We can also deal with the fully nonlinear quadratic case that $F(u,\p u,\p^2u)$ is not linear in $\p^2u$ and Theorems \ref{thm1}-\ref{thm2} still hold, see Remark \ref{fully:nonl} in Appendix B.
\end{remark}

We now recall some basic results on the nonlinear Klein-Gordon equation
\begin{equation}\label{NLKG}
\left\{
\begin{aligned}
&\Box u+m^2u=F(u,\p u,\p^2u),\quad(t,x)\in[1,\infty)\times\R^d,\\
&(u,\p_tu)(1,x)=\ve(u_0, u_1)(x),
\end{aligned}
\right.
\end{equation}
where $m\neq0$, $x\in\Bbb R^d$ ($d\ge 1$).

\vskip 0.1 true cm

{\bf $\bullet$} The cases of  $(u_{0},u_{1})\in C^{\infty}(\Bbb R^d)$
with suitably rapid decay at infinity or $(u_{0},u_{1})\in H^{s+1}(\R^d)\times H^s(\R^d)$

\vskip 0.1 true cm

When $d\ge2$, it is well known that problem \eqref{NLKG} with rapidly decaying $(u_0, u_1)$ has a global
smooth solution, see \cite{Klainerman85,OTT96,Shatah85,ST93}.
When $d=1$ and the nonlinearity $F$ satisfies the null condition, the author in \cite{Delort01}
establishes the global existence of \eqref{NLKG} for the rapidly decaying $(u_0, u_1)$.

If  $(u_0, u_1)\in H^{s+1}(\R^d)\times H^s(\R^d)$ with integer $s>(d+3)/2$ and the corresponding  semilinear $F=F(u,\p u)$ satisfies the null condition, then the lifespan $T_{\ve}$ of the solution $u$ to \eqref{NLKG} fulfills at least $T_\ve\ge Ce^{C/\ve}$ for $d\ge3$, $T_\ve\ge Ce^{C\ve^{-2/3}}$ for $d=2$ and $T_\ve\ge C\ve^{-4}|\ln\ve|^{-6}$ for $d=1$, respectively, where $C>0$ is a constant, see \cite{Delort97,DF00}. In addition, the author in \cite{Stingo18} proves the global existence of  \eqref{NLKG} with mildly decaying $(u_0, u_1)$ for $d=1$.

\vskip 0.1 true cm

{\bf $\bullet$} The cases of the periodic initial data $(u_{0},u_{1})$

\vskip 0.1 true cm

For $d=1$ and $F=F(x,u)$, the results in \cite{Bambusi03,BG06,Bourgain96} show that for any $M>0$ and the number $m$ except a subset
of zero measure in $\Bbb R$, when $(u_{0},u_{1})\in H^{s_1+1}(\T)\times H^{s_1}(\T)$ with $s_1$ depending on $M$, the solution
to \eqref{NLKG} exists for time $t\in[0,C_M\ve^{-M}]$ with  $C_M>0$ being some constant.

For $d\ge1$ and $F=F(u,\p u)$, the author in \cite{Delort98} has proved that if $F$ vanishes of order $r\ge2$ at $0$, then the lifespan $T_\ve$
of the periodic solution to problem \eqref{NLKG} satisfies at least $T_\ve\ge C\ve^{-2}$ for $r=2$ and
$T_\ve\ge C\ve^{-(r-1)}|\ln\ve|^{-(r-3)}$ for $r\ge3$.
For $d\ge2$ and $F=F(u)$ (even for $F(x,u)$), the result in \cite{Delort09} shows that when $F$ vanishes of order $r\ge2$ at $0$, for any $A>1$, there is $s_2>0$ such that problem \eqref{NLKG} has a unique solution $u\in C([0,T_\ve],H^{s_2+1}(\T^d))\cap C^1([0,T_\ve],H^{s_2}(\T^d))$ with $T_\ve\ge C\ve^{-(r-1)(1+2/d)}|\ln\ve|^{-A}$.
For more general nonlinearity $F=F(u,\p u,\p^2u)$, one can see \cite{DS04}.

\vskip 0.1 true cm

{\bf $\bullet$} The cases of the partial periodic initial data $(u_{0},u_{1})$ defined in $\Bbb R^{d_1}\times \T^{d_2}$ ($d_1+d_2=d$)

\vskip 0.1 true cm

For $1\le d_1\le4$ and $d_2=2$, the authors in \cite{HV18} study the small data scattering of the energy critical nonlinear Klein-Gordon equation $\Box u+u=\pm|u|^\frac{4}{d_1}u$ with initial data in $H^1(\R^{d_1}\times\T^{d_2})\times L^2(\R^{d_1}\times\T^{d_2})$.
The large data scattering of the defocusing nonlinear Klein-Gordon equation on $\R^{d_1}\times\T$ with $1\le d_1\le4$ in the subcritical case
has also been established in \cite{FH20}.

For the general nonlinearity $F=F(u,\p u,\p^2u)$ in \eqref{NLKG}, the authors in \cite{LTY22a,LTY22b,TY21} prove that problem \eqref{NLKG} with
$(u_0,u_1)$ defined on $\R^3\times\T$ or $\R^2\times\T$ admits a global solution, respectively.

\vskip 0.1 true cm

Next we give some comments on the proofs of Theorem \ref{thm1} and \ref{thm2}.
Note that for the weakly decaying initial data in \eqref{KG}, it is hard to get such a dispersive estimate of the solution $v$ to the linear Klein-Gordon equation $\Box v+v=0$ with $(v,\p_t v)(1,x)=(u_0,u_1)(x)$
\begin{equation}\label{intro:disp}
\|(\p_tv,\Lambda v)\|_{L^\infty(\R^d)}\le Ct^{-d/2}\|\Lambda^n(u_0,u_1)\|_{L^1(\R^d)},
\end{equation}
where $C$ and $n$ are some positive constants. The reason is that the $L^1$ norm of the right hand side
in \eqref{intro:disp} can become infinity since $\|u_j\|_{L^1(\R^d)}$ $(j=0,1)$ is controlled by $\|\w{x}^{d/2+}u_j\|_{L^2(\R^d)}$
and the latter is generally unbounded by $u_j\in H^{N+1-j}(\R^d)$. It is pointed out that the inequality \eqref{intro:disp} plays a key role
in \cite{LW14}, \cite{Shatah85,ST93} and so on.
Instead of \eqref{intro:disp}, we will employ the following Strichartz estimate
\begin{equation}\label{intro:Stric}
\|(\p_tv,\Lambda v)\|_{L^2([1,t])L^\infty(\R^d)}
\le\left\{
\begin{aligned}
&C\|\Lambda^n(u_0,u_1)\|_{L^2(\R^d)},&&d\ge3,\\
&C\ln^{1/2}t\|\Lambda^n(u_0,u_1)\|_{L^2(\R^2)},&&d=2.
\end{aligned}
\right.
\end{equation}
On the other hand, in order to apply the Strichartz estimate in the higher order energy estimates
of problem \eqref{KG},  the normal form method in \cite{Shatah85} which transforms the quadratic nonlinearity $F$ into
a cubic term can not be directly used due to the resulting loss of solution regularities in the transformation process.
To overcome this difficulty, we will carry out a careful resonance analysis similar to that in \cite{IP13,Zheng19} and
introduce a good unknown. At this time, the related cubic nonlinearity can be bounded by
$L_t^\infty H^N(\R^d)\times L_t^2W^{N',\infty}(\R^d)\times L_t^2W^{N',\infty}(\R^d)$ norms of the solution $u$
($N'$ is an integer) and further the energy estimates are derived.
Then Theorem \ref{thm1} is shown.
To prove Theorem \ref{thm2} for $d=2$, inspired by \cite{Zheng22}, we will establish
a kind of weighted Strichartz estimates instead of \eqref{intro:Stric}.
It is noticed that if $L^2([1,t])L^\infty(\R^2)$ is replaced by $L^p([1,t])L^\infty(\R^2)$ with any $p>2$ in \eqref{intro:Stric}, then \eqref{intro:Stric} holds without the factor $\ln^{1/2}t$.
In addition, integrating the dispersive estimate \eqref{intro:disp} in time yields
\begin{equation}\label{intro:Stric1}
\|s^{1/2}(\p_tv,\Lambda v)\|_{L^p([1,t])L^\infty(\R^2)}\le C\|\w{x}^{1+}\Lambda^n(u_0,u_1)\|_{L^2(\R^2)}
\end{equation}
provided that the right hand side of \eqref{intro:Stric1} is bounded.

It follows from the interpolation between \eqref{intro:Stric} and \eqref{intro:Stric1} that there is $\beta\in(0,\alpha)$ such that
\begin{equation}\label{intro:Stric2}
\|s^{\beta/2}(\p_tv,\Lambda v)\|_{L^p([1,t])L^\infty(\R^2)}\le C\|\w{x}^\alpha\Lambda^n(u_0,u_1)\|_{L^2(\R^2)}.
\end{equation}
Choosing $p>2$ in \eqref{intro:Stric2} such that $\|s^{-\beta/2}\|_{L^{2p/(p-2)}([1,t])}<\infty$ and then
\begin{equation}\label{intro:Stric3}
\begin{split}
\|(\p_tv,\Lambda v)\|_{L^2([1,t])L^\infty(\R^2)}&\le\|s^{-\beta/2}\|_{L^{2p/(p-2)}([1,t])}
\|s^{\beta/2}(\p_tv,\Lambda v)\|_{L^p([1,t])L^\infty(\R^2)}\\
&\le C\|\w{x}^\alpha\Lambda^n(u_0,u_1)\|_{L^2(\R^2)}.
\end{split}
\end{equation}
With this improved Strichartz estimate, the energy estimate as in the proof of Theorem \ref{thm1} can be established.
In addition, to complete the proof of Theorem \ref{thm2}, the remain task is to control the weighted $L^2$ norm in the right hand side of the resulting energy estimate due to the appearance of nonlinearity $F$ in \eqref{KG}.
For this purpose, both the dyadic decompositions in the frequency space
and in the Euclidean physical space $\R^2$ will be adopted. Together with the precise localized dispersive estimate and
Strichartz estimate, we can close the arguments on the weighted $L^2$ norm estimate of solution.

The paper is organized as follows.
In Section 2, some preliminaries such as the linear dispersive estimate, Strichartz estimates and paradifferential calculus are given.
By introducing a good unknown and utilizing resonance analysis, the higher order energy estimates for problem \eqref{KG}
will be established in Section 3.
In Section 4, the lower order energy estimates of \eqref{KG} are obtained and then Theorem \ref{thm1} is proved.
In Section 5, we will close the weighted $L^2$ norm estimate of solution and finish the proof of Theorem \ref{thm2}.
In addition, the estimates of some related multilinear Fourier multipliers are given in Appendix A.
Meanwhile, a basic reformulation of the good unknown is derived in Appendix B.

\section{Preliminaries}
\subsection{Linear dispersive estimate and Strichartz estimate}
For the function $f(x)$ on $\R^d$, define its Fourier transformation as
\begin{equation*}
\hat f(\xi):=\sF_xf(\xi)=\int_{\R^d}e^{-ix\cdot\xi}f(x)dx.
\end{equation*}
Choose a smooth cutoff function $\psi: \R\rightarrow[0,1]$, which equals 1 on $[-5/4,5/4]$ and vanishes outside $[-8/5,8/5]$, set
\begin{equation*}
\begin{split}
&\psi_k(x):=\psi(|x|/2^k)-\psi(|x|/2^{k-1}),\quad k\in\Z,k\ge0,\\
&\psi_{-1}(x):=1-\sum_{k\ge0}\psi_k(x)=\psi(2|x|),
\quad\psi_I:=\sum_{k\in I\cap\Z\cap[-1,\infty)}\psi_k,
\end{split}
\end{equation*}
where $I$ is any interval of $\R$.
Let $P_k$ be the Littlewood-Paley projection onto frequency $2^k$
\begin{equation*}
\sF(P_kf)(\xi):=\psi_k(\xi)\sF f(\xi),\quad k\in\Z,k\ge-1.
\end{equation*}
In addition, for any interval $I$, $P_I$ is defined by
\begin{equation*}
P_If:=\sum_{k\in I\cap\Z\cap[-1,\infty)}P_kf.
\end{equation*}

\begin{lemma}[Linear dispersive estimate]\label{lem:disp}
For any function $f$, integer $k\ge-1$ and $t\ge1$, it holds that
\begin{equation}\label{disp:estimate}
\|P_ke^{\pm it\Lambda}f\|_{L^\infty(\R^d)}\ls2^{k(d/2+1)}t^{-d/2}\|P_kf\|_{L^1(\R^d)},
\end{equation}
where and below for the non-negative quantities $f$ and $g$,
$f\ls g$ means $f\le Cg$ with $C$ being a generic positive constant.
\end{lemma}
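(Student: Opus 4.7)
The plan is to reduce the lemma to an $L^\infty$ bound on the convolution kernel of the frequency-localized propagator and then obtain that bound by stationary phase.

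First I introduce a slightly fattened cutoff $\tilde\psi_k$ with $\tilde\psi_k\equiv 1$ on $\supp\psi_k$, and let $\tilde P_k$ denote the associated projector. Since $\tilde\psi_k\psi_k=\psi_k$, one has the factorization $P_k e^{\pm it\Lambda}=\tilde P_k e^{\pm it\Lambda}P_k$, which gives
$$\bigl(P_k e^{\pm it\Lambda} f\bigr)(x)=\int_{\R^d}\tilde K_k(t,x-y)(P_k f)(y)\,dy,\qquad \tilde K_k(t,z):=\int_{\R^d}e^{iz\cdot\xi\pm it\sqrt{1+|\xi|^2}}\tilde\psi_k(\xi)\,d\xi.$$
Young's inequality then reduces the lemma to the pointwise bound $\|\tilde K_k(t,\cdot)\|_{L^\infty(\R^d)}\lesssim 2^{k(d/2+1)}t^{-d/2}$ for $t\ge 1$.

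For the Klein--Gordon phase $\phi(\xi):=\sqrt{1+|\xi|^2}$, a direct computation yields
$$\nabla\phi(\xi)=\frac{\xi}{\sqrt{1+|\xi|^2}},\qquad D^2\phi(\xi)=\frac{1}{\sqrt{1+|\xi|^2}}\Bigl(I-\frac{\xi\xi^\top}{1+|\xi|^2}\Bigr),\qquad \det D^2\phi(\xi)=(1+|\xi|^2)^{-(d+2)/2},$$
so the Hessian is everywhere non-degenerate. In the main case $k\ge 1$, I rescale $\xi=2^k\eta$ so that the amplitude $\tilde\psi_k(2^k\cdot)$ is supported in $|\eta|\sim 1$ with all derivatives bounded uniformly in $k$. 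The integral becomes
$$\tilde K_k(t,z)=2^{kd}\int_{\R^d}e^{i(2^k t)\Psi(\eta)}\tilde\psi_k(2^k\eta)\,d\eta,\qquad \Psi(\eta):=\frac{z}{t}\cdot\eta\pm\sqrt{2^{-2k}+|\eta|^2},$$
with effective large parameter $\lambda:=2^k t\ge 1$. By the same computation as above, the Hessian of $\pm\sqrt{2^{-2k}+|\eta|^2}$ has determinant $\sim 2^{-2k}$ on $|\eta|\sim 1$, so $|\det D^2\Psi|^{-1/2}\sim 2^k$.

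On the region of $z$ for which the critical point of $\Psi$ lies in the support of the amplitude (which forces $|z|\lesssim t$), the stationary phase lemma yields
$$|\tilde K_k(t,z)|\lesssim 2^{kd}\cdot\lambda^{-d/2}\cdot 2^k=2^{kd}\cdot(2^kt)^{-d/2}\cdot 2^k=2^{k(d/2+1)}t^{-d/2},$$
while on the complementary region $|\nabla\Psi|$ is bounded below on the support, and iterated integration by parts against $L:=|\nabla\Psi|^{-2}\nabla\Psi\cdot\nabla/i$ produces decay of order $\lambda^{-M}$ for any $M$, easily dominated by the same quantity. For $k\in\{-1,0\}$ no rescaling is needed and the same argument applied directly in $\xi$ gives $|\tilde K_k|\lesssim t^{-d/2}$, which matches $2^{k(d/2+1)}t^{-d/2}$ up to a dimension-dependent constant. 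The main technical point is the bookkeeping of the $2^k$ factors in the stationary phase step: the Jacobian contributes $2^{kd}$, the large-parameter asymptotics contribute $(2^k t)^{-d/2}$, and the non-degeneracy of the Hessian supplies the extra $2^k$ via $|\det|^{-1/2}\sim 2^k$; this last factor, traceable to $\det D^2\phi(\xi)=(1+|\xi|^2)^{-(d+2)/2}$ at high frequency, is what sharpens the exponent to $d/2+1$ rather than the naive $d/2$.
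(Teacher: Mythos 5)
Your reduction to a kernel bound is the same as the paper's: write $P_ke^{\pm it\Lambda}=\tilde P_ke^{\pm it\Lambda}P_k$ with a fattened dyadic projector (the paper uses $P_k=P_kP_{[k-1,k+1]}$, equation \eqref{proj:proj}), then Young's inequality leaves $\|\tilde K_k(t,\cdot)\|_{L^\infty}\lesssim 2^{k(d/2+1)}t^{-d/2}$. The difference is in how that kernel bound is obtained: the paper simply cites Corollaries 2.36 and 2.38 of \cite{NS11book}, whereas you prove it from scratch by rescaling $\xi=2^k\eta$ and stationary phase. Your bookkeeping is exactly right and, in particular, the factor $|\det D^2\Psi|^{-1/2}\sim 2^k$ coming from $\det D^2\sqrt{a^2+|\eta|^2}\sim a^2$ on $|\eta|\sim1$ with $a=2^{-k}$ is precisely what upgrades the naive $d/2$ to the exponent $d/2+1$. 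This self-contained route is more informative than the paper's citation and makes visible where the extra $2^k$ comes from.

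The one place you should tighten the argument is the invocation of ``the stationary phase lemma.'' After the rescaling, the Hessian $D^2\Psi$ has one eigenvalue $\sim2^{-2k}$ in the radial direction and $d-1$ eigenvalues $\sim1$ in the angular directions, so the Hessian \emph{degenerates} as $k\to\infty$. The textbook form of multidimensional stationary phase (e.g.\ H\"ormander's Theorem 7.7.5) produces the leading term $\lambda^{-d/2}|\det D^2\Psi|^{-1/2}$, but its error bounds are \emph{not} uniform when the smallest eigenvalue of the Hessian shrinks: the constant depends on $\sup|x-x_0|/|\nabla\Psi(x)|$, which here blows up like $2^{2k}$. A clean rigorous version separates variables: pass to polar coordinates in $\eta$, do the $(d-1)$-dimensional angular integral (uniformly non-degenerate Hessian $\sim1$, giving $\lambda^{-(d-1)/2}$), and then apply one-dimensional van der Corput in the radial variable, where $|\p_r^2\Psi|\gtrsim2^{-2k}$ gives $\lesssim(\lambda\,2^{-2k})^{-1/2}$. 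Because all higher $\eta$-derivatives of $\sqrt{a^2+|\eta|^2}$ are bounded uniformly in $a$ on $|\eta|\sim1$, these constants are uniform and the products reassemble to $\lambda^{-d/2}\cdot2^k$, exactly your answer. Relatedly, the passage from your ``stationary region'' to the ``non-stationary region'' should be treated as a smooth transition rather than a sharp dichotomy, since $|\nabla\Psi|$ is only small (not vanishing) near the boundary; the standard fix is to include an $O(\lambda^{-1/2})$-neighborhood of the critical set in the stationary-phase region. None of this changes your final exponents, but without it the step ``the stationary phase lemma yields'' is not quite self-justifying.
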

\begin{proof}
It is easy to check that
\begin{equation}\label{proj:proj}
P_k=P_kP_{[k-1,k+1]}.
\end{equation}
Then we have
\begin{equation}\label{disp:estimate1}
\begin{split}
P_ke^{it\Lambda}f(x)&=(2\pi)^{-d}\int_{\R^d}K_k(t,x-y)P_kf(y)dy,\\
K_k(t,x)&:=\int_{\R^d}e^{i(x\cdot\xi+t\w{\xi})}\psi_{[k-1,k+1]}(\xi)d\xi.
\end{split}
\end{equation}
According to Corollary 2.36 and 2.38 in \cite{NS11book}, for any $t\ge1$, it holds that
\begin{equation*}
\|K_k(t,x)\|_{L^\infty(\R^d)}\ls2^{k(d/2+1)}t^{-d/2}.
\end{equation*}
This, together with Young's inequality and \eqref{disp:estimate1}, leads to
\begin{equation*}
\|P_ke^{it\Lambda}f\|_{L^\infty(\R^d)}\ls\|K_k\|_{L^\infty(\R^d)}\|P_kf\|_{L^1(\R^d)}
\ls2^{k(d/2+1)}t^{-d/2}\|P_kf\|_{L^1(\R^d)}.
\end{equation*}
The estimate of $\|P_ke^{-it\Lambda}f\|_{L^\infty(\R^d)}$ is analogous, we omit the details.
\end{proof}
Through minor modifications for the proof of Lemma 3.2 in \cite{Zheng22}, we next derive the following result.
\begin{lemma}[Linear Strichartz estimate]\label{lem:Stricha}
For any function $f$, integer $k\ge-1$ and $t\ge1$, it holds that
\begin{equation}\label{Strichartz}
\|P_ke^{\pm is\Lambda}f\|_{L^2([1,t])L^\infty(\R^d)}
\ls2^{kd/2}c_d(t)\|P_kf\|_{L^2(\R^d)},
\end{equation}
where $c_d(t)=1$ if $d\ge3$ and $c_2(t)=\ln^{1/2}t$.
Moreover, for $d=2$ and $p\in(2,\infty)$, one has
\begin{equation}\label{Strichartz'}
\|P_ke^{\pm is\Lambda}f\|_{L^p([1,t])L^\infty(\R^2)}
\ls\frac{2^k}{(p-2)^{1/p}}\|P_kf\|_{L^2(\R^2)}.
\end{equation}
\end{lemma}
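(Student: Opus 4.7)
The plan is to run the $TT^*$ duality argument for the frequency-localized half-wave propagator $Tf(s,x) := P_k e^{\pm is\Lambda}f(x)$, viewed as a bounded operator from $L^2(\R^d)$ into $L^q([1,t],L^\infty(\R^d))$ for the target exponent $q\in\{2,p\}$. With the standard caveat that $(L^1)^*\ne L^\infty$, handled by testing $T^*$ on Schwartz-class functions dense in the predual $L^{q'}_sL^1_x$, the squared operator norm is controlled by $\|T\|_{L^2\to L^q L^\infty}^2 \ls \|TT^*\|_{L^{q'}_sL^1_x\to L^q_s L^\infty_x}$, with
\[
TT^*g(s) = \int_1^t P_{[k-1,k+1]}\, e^{\pm i(s-s')\Lambda}\, P_k g(s')\,ds'
\]
since $P_k = P_kP_{[k-1,k+1]}$ and $P_k$ commutes with the propagator.

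The central input is the pointwise-in-time kernel bound
\[
K(\tau) := \|P_{[k-1,k+1]} e^{\pm i\tau\Lambda} P_k\|_{L^1\to L^\infty} \ls \min\bigl(2^{kd},\, 2^{k(d/2+1)}|\tau|^{-d/2}\bigr),
\]
where the $2^{kd}$-bound comes from the trivial $L^\infty$-estimate on the Schwartz kernel (an oscillatory integral of unit amplitude over a region of volume $\ls 2^{kd}$) and dominates for $|\tau|\le 1$, while for $|\tau|\ge 1$ the second bound is exactly Lemma \ref{lem:disp}. After this pointwise step, $TT^*$ reduces to a convolution with $K$ in time.

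For \eqref{Strichartz} I apply Young's inequality $L^1_s * L^2_s \subset L^2_s$ (equivalently Schur's test for the nonnegative kernel $K$), reducing matters to $\|K\|_{L^1([0,t])}$. For $d\ge 3$ the long-time tail $|\tau|^{-d/2}$ is integrable at infinity, giving $\|K\|_{L^1}\ls 2^{kd}$ uniformly in $t$; for $d=2$ the tail $|\tau|^{-1}$ integrates to $\ln t$ on $[1,t]$, producing $\|K\|_{L^1([0,t])}\ls 2^{2k}\ln t$. Taking a square root yields \eqref{Strichartz} with the asserted $c_d(t)$.

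For \eqref{Strichartz'} with $p>2$ and $d=2$, I instead invoke Young $L^{p'}_s * L^{p/2}_s \subset L^p_s$ (verified by $1/p'+2/p = 1+1/p$), and estimate
\[
\|K\|_{L^{p/2}([0,t])} \ls 2^{2k} + \Bigl(\int_1^t(2^{2k}\tau^{-1})^{p/2}\,d\tau\Bigr)^{2/p} \ls \frac{2^{2k}}{(p-2)^{2/p}},
\]
using $\int_1^\infty \tau^{-p/2}\,d\tau = 2/(p-2)$. Taking a square root produces the factor $2^k/(p-2)^{1/p}$ in \eqref{Strichartz'}. The one nonroutine point of the argument is tracking the sharp constant $(p-2)^{-1/p}$ through the Young step; everything else follows the familiar $TT^*$ framework employed in the proof of \cite[Lemma 3.2]{Zheng22}.
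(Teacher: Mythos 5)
Your proposal is correct and follows essentially the same route as the paper: the $TT^*$ duality reduction, the kernel bound obtained from Bernstein (short times) and the dispersive estimate (long times), and Young's inequality in the time variable with $L^1 * L^2 \subset L^2$ for \eqref{Strichartz} and $L^{p'}*L^{p/2}\subset L^p$ for \eqref{Strichartz'}. The only cosmetic difference is that you keep the kernel bound in split form $\min(2^{kd},\,2^{k(d/2+1)}|\tau|^{-d/2})$ while the paper merges it into the single expression $2^{kd}(1+|\tau|)^{-d/2}$ before integrating; both give $\|K\|_{L^1}\ls 2^{kd}c_d^2(t)$ and $\|K\|_{L^{p/2}}\ls 2^{2k}(p-2)^{-2/p}$ (the last step in your $p$-case implicitly uses that $(p-2)^{2/p}$ is bounded above on $(2,\infty)$, so the $O(2^{2k})$ contribution from $|\tau|\le 1$ can be absorbed — worth a parenthetical remark, but not a gap).
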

\begin{remark}\label{Strichartz:d=1}
If $d=1$, set $c_1(t)=t^{1/4}$, then \eqref{Strichartz} still holds with $2^{kd/2}$ replaced by $2^{3k/4}$.
\end{remark}
\begin{proof}
For any $q\ge2$, write the operator
\begin{equation*}
T: f\mapsto P_{[k-1,k+1]}e^{\pm is\Lambda}f,
\quad L^2(\R^d)\rightarrow L^q([1,t])L^\infty(\R^d).
\end{equation*}
Then the adjoint operator of $T$ is
\begin{equation*}
T^*: g\mapsto \int_1^tP_{[k-1,k+1]}e^{\mp is\Lambda}g(s)ds, \quad L^{q'}([1,t])L^1(\R^d)\rightarrow L^2(\R^d),
\end{equation*}
where $q'=\frac{q}{q-1}$. Moreover,
\begin{equation}\label{Strichartz1}
\|T\|=\|T^*\|=\|TT^*\|^\frac12.
\end{equation}
In addition, one has
\begin{equation}\label{Strichartz2}
TT^*: g\mapsto \int_1^tP_{[k-1,k+1]}^2e^{\mp i(s'-s)\Lambda}g(s')ds',
\quad L^{q'}([1,t])L^1(\R^d)\rightarrow L^q([1,t])L^\infty(\R^d).
\end{equation}
It follows from \eqref{disp:estimate} and the Bernstein inequality that
\begin{equation}\label{Strichartz3}
\|P_{[k-1,k+1]}^2e^{\mp i(s'-s)\Lambda}g(s')\|_{L^\infty(\R^d)}
\ls2^{kd}(1+|s'-s|)^{-d/2}\|g(s')\|_{L^1(\R^d)}.
\end{equation}
Given an interval $I\subset\R$, denote the characteristic function
\begin{equation}\label{charact:def}
\id_{I}(t)=\left\{
\begin{aligned}
&1,\qquad t\in I,\\
&0,\qquad t\not\in I.
\end{aligned}
\right.
\end{equation}
Applying Young's inequality with \eqref{Strichartz2}--\eqref{Strichartz3} yields for $q=2$
\begin{equation}\label{Strichartz4}
\begin{split}
\|TT^*g\|_{L^2([1,t])L^\infty(\R^d)}&\ls2^{kd}
\Big\|\Big(\id_{[-t,t]}(\cdot)(1+|\cdot|)^{-d/2}\Big)
\ast\Big(\id_{[1,t]}(\cdot)\|g(\cdot)\|_{L^1(\R^d)}\Big)(s)\Big\|_{L^2([1,t])}\\
&\ls2^{kd}\|(1+|\cdot|)^{-d/2}\|_{L^1([-t,t])}\|g\|_{L^2([1,t])L^1(\R^d)}\\
&\ls2^{kd}c_d^2(t)\|g\|_{L^2([1,t])L^1(\R^d)}.
\end{split}
\end{equation}
Let $g=P_kf$ in \eqref{Strichartz4}. Then \eqref{Strichartz} is achieved from \eqref{Strichartz1} and \eqref{Strichartz4}.
Next, we turn to the proof of \eqref{Strichartz'}.
By using Young's inequality with $q=p>2$ for $TT^*g$, we can obtain
\begin{equation*}
\begin{split}
\|TT^*g\|_{L^p([1,t])L^\infty(\R^2)}&\ls2^{2k}
\|(1+|\cdot|)^{-1}\|_{L^{p/2}([-t,t])}\|g\|_{L^{p'}([1,t])L^1(\R^2)},\\
&\ls\frac{2^{2k}}{(p-2)^{2/p}}\|g\|_{L^{p'}([1,t])L^1(\R^2)},
\end{split}
\end{equation*}
which yields \eqref{Strichartz'}.
\end{proof}

\subsection{Paradifferential calculus}

As in Section 3 of \cite{Zheng22} or Section 3.2 of \cite{DIP17}, we collecting the following definitions.
\begin{definition}
Given a symbol $a=a(x,\zeta):\R^d\times(\R^d\setminus\{0\})\rightarrow\C$, define the Weyl quantization operator $T_a$ as
\begin{equation}\label{para:def}
\sF(T_af)(\xi):=C\int_{\R^d}\psi_{\le-10}\Big(\frac{|\xi-\eta|}{|\xi+\eta|}\Big)
(\sF_xa)(\xi-\eta,\frac{\xi+\eta}{2})\hat f(\eta)d\eta,
\end{equation}
where $\psi_{\le-10}(x)=\psi(2^{10}|x|)$ and $C$ is a normalization constant such that $T_1={\rm Id}$.
\end{definition}
\begin{remark}
When $\xi=\eta=0$, $T_a$ has no definition.
In fact, we will always deal with $P_{\ge0}T_af$ or $T_aP_{\ge0}f$, which means that
the situation of $\xi=\eta=0$ in \eqref{para:def} does not happen.
\end{remark}

\begin{lemma}\label{lem:para}
\begin{description}
  \item[(i)] If $a$ is real valued, then $T_a$ is self adjoint.
  \item[(ii)] If $a=a(\zeta)$, then $T_af=a(\frac{\p_x}{i})f$ is a Fourier multiplier.
\end{description}
\end{lemma}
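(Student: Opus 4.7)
The plan is to verify both parts by direct computation from the definition \eqref{para:def}, working on the Fourier side where the formula for $T_a$ is manifest.

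For part (ii), I would substitute $a=a(\zeta)$ into \eqref{para:def}. Since $a$ has no $x$-dependence, $\sF_x a(\xi-\eta,(\xi+\eta)/2) = (2\pi)^d \delta(\xi-\eta)\, a((\xi+\eta)/2)$, and the cutoff factor $\psi_{\le-10}(|\xi-\eta|/|\xi+\eta|)$ evaluates to $\psi_{\le-10}(0)=1$ on the support of the delta. Integration in $\eta$ collapses to $\sF(T_a f)(\xi) = C(2\pi)^d a(\xi)\hat f(\xi)$. The normalization $T_1=\mathrm{Id}$ forces $C(2\pi)^d=1$, so $\sF(T_a f)(\xi)=a(\xi)\hat f(\xi)$, which is exactly the statement that $T_a=a(\p_x/i)$.

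For part (i), I would compute $\langle T_a f, g\rangle$ via Plancherel:
\begin{equation*}
\langle T_a f, g\rangle = (2\pi)^{-d}\iint C\,\psi_{\le-10}\!\Bigl(\tfrac{|\xi-\eta|}{|\xi+\eta|}\Bigr)(\sF_x a)\bigl(\xi-\eta,\tfrac{\xi+\eta}{2}\bigr)\hat f(\eta)\overline{\hat g(\xi)}\,d\eta\,d\xi,
\end{equation*}
and compare with the analogous expression for $\langle f, T_a g\rangle$, in which the roles of $\xi$ and $\eta$ are swapped and $\sF_x a$ is conjugated. Reality of $a$ gives $\overline{(\sF_x a)(\zeta,y)}=(\sF_x a)(-\zeta,y)$, so after the substitution $\zeta=\eta-\xi$ we recover $(\sF_x a)(\xi-\eta,(\xi+\eta)/2)$. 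The cutoff $\psi_{\le-10}(|\xi-\eta|/|\xi+\eta|)$ is already symmetric in $(\xi,\eta)$, and $C$ is real by (ii). Relabeling the dummy variables then yields $\langle T_a f,g\rangle = \langle f, T_a g\rangle$.

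I do not anticipate a serious obstacle; both statements reduce to bookkeeping on the Fourier side. The only mildly subtle point is keeping track of the conjugation conventions in the reality argument for (i) and confirming the normalization constant $C$ in (ii), but both are elementary once the symbols of $\sF_x$ on real-valued functions and on $x$-independent symbols are handled carefully.
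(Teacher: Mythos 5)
Your proposal is correct and is exactly the direct computation the paper has in mind; the paper itself simply asserts ``it follows directly from the definition'' and omits the details, which you have supplied accurately (delta collapse and $\psi_{\le-10}(0)=1$ for (ii) giving $C=(2\pi)^{-d}$; Plancherel, the conjugation identity $\overline{\sF_xa(\zeta',\cdot)}=\sF_xa(-\zeta',\cdot)$ for real $a$, symmetry of the cutoff, and reality of $C$ for (i)).
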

\begin{proof}
It follows directly from the definition, we omit the proof here.
\end{proof}

\begin{definition}[Symbol norm]
For $p\in[1,\infty]$ and $m\in\R$, define
\begin{equation*}
\|a\|_{\sL_m^p}:=\sup_{\zeta\in\R^d}(1+|\zeta|)^{-m}\||a|(x,\zeta)\|_{L_x^p(\R^d)},
\quad |a|(x,\zeta):=\sum_{|\alpha|\le\mathfrak{c}_d}|\zeta|^{|\alpha|}
|D_\zeta^\alpha a(x,\zeta)|,
\end{equation*}
where $\mathfrak{c}_d$ is some integer depending on the space dimensions $d$.
\end{definition}

\begin{lemma}\label{lem:paraL2}
\begin{description}
  \item[(i)] For fixed $m,s\in\R$, we have $\|T_af\|_{H^s(\R^d)}\ls\|a\|_{\sL_m^\infty}\|f\|_{H^{s+m}(\R^d)}$.
  \item[(ii)] For fixed $0\le m<s$ with $m,s\in\R$, $\|H(f,g)\|_{H^s(\R^d)}\ls\|f\|_{W^{m,\infty}(\R^d)}\|g\|_{H^{s-m}(\R^d)}$ holds,
      where the remainder term
\begin{equation}\label{remainder:def}
H(f,g):=fg-T_fg-T_gf.
\end{equation}
\end{description}
\end{lemma}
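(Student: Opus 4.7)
The plan is to reduce both bounds to standard Littlewood-Paley arguments, exploiting the fact that the cutoff $\psi_{\le-10}(|\xi-\eta|/|\xi+\eta|)$ in \eqref{para:def} forces $|\xi|\sim|\eta|$ on the support of the integrand, so $T_a$ has a paraproduct structure.

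For (i), I would insert Littlewood-Paley projectors and exploit this spectral localization to show $P_\ell T_a f = P_\ell T_a P_{[\ell-C,\ell+C]}f$ up to harmless low-frequency contributions. Then I would view $P_\ell T_a g$, for $g=P_{\sim\ell}f$, as a pseudodifferential operator with symbol essentially frozen at scale $|\zeta|\sim 2^\ell$. The derivative weights $|\zeta|^{|\alpha|}|D_\zeta^\alpha a|$ in the symbol norm $\|a\|_{\sL_m^\infty}$ give pointwise control on the associated kernel and, via Schur's test or a standard Calder\'on-Vaillancourt type argument, yield the operator bound $\|P_\ell T_a g\|_{L^2}\ls 2^{\ell m}\|a\|_{\sL_m^\infty}\|P_{\sim\ell}f\|_{L^2}$. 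Squaring, multiplying by $2^{2\ell s}$, and summing in $\ell$ completes (i).

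For (ii), the key observation is that, after a Littlewood-Paley decomposition $f=\sum_j P_jf$ and $g=\sum_k P_kg$, the paraproducts $T_fg$ and $T_gf$ capture precisely the interactions with $j\ll k$ and $k\ll j$ respectively (up to finite overlap dictated by the $\psi_{\le-10}$ cutoff), leaving $H(f,g)$ supported on the high-high to low diagonal $|j-k|\le C$. Thus
\begin{equation*}
P_\ell H(f,g)=\sum_{\substack{|j-k|\le C\\ \max(j,k)\ge\ell-C}}P_\ell(P_jf\cdot P_kg),
\end{equation*}
and H\"older gives $\|P_\ell(P_jf\cdot P_kg)\|_{L^2}\ls\|P_jf\|_{L^\infty}\|P_kg\|_{L^2}$. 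Using the Bernstein-type bound $\|P_jf\|_{L^\infty}\ls 2^{-jm}\|f\|_{W^{m,\infty}}$ for $j\ge 0$ together with the $\ell^2$ sequence $c_k:=2^{k(s-m)}\|P_kg\|_{L^2}$ with $\|c\|_{\ell^2}\ls\|g\|_{H^{s-m}}$, the norm $\|H(f,g)\|_{H^s}^2$ is bounded by
\begin{equation*}
\|f\|_{W^{m,\infty}}^2\sum_\ell\Bigl(\sum_{j\ge\ell-C}2^{(\ell-j)s}c_j\Bigr)^2,
\end{equation*}
which is $\ls\|c\|_{\ell^2}^2\|f\|_{W^{m,\infty}}^2$ by Young's convolution inequality in $\ell^2(\Z)$, thanks to the summability of $\sum_{j\ge 0}2^{-js}$ guaranteed by $s>0$ (since $s>m\ge 0$).

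The main obstacle is the bookkeeping in (ii): one must carefully verify on the Fourier side that $fg-T_fg-T_gf$ really localizes to $|j-k|\le C$. This amounts to expanding all three products with the specific Weyl cutoff $\psi_{\le-10}(|\xi-\eta|/|\xi+\eta|)$, checking that the diagonal contributions cancel and that the low-frequency residue is harmless. The argument is routine paradifferential calculus but requires some care; once it is in place, the hypothesis $s>m$ is used only at the final summation step.
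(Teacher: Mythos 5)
The paper itself contains no proof of Lemma \ref{lem:paraL2}: it simply cites \cite[Lemmas 3.11 and 3.13]{Zheng22}. Your sketch reconstructs the standard Littlewood--Paley / paradifferential argument, which is surely what that reference uses, and it is correct in outline. For (i), the only point left implicit is why the $\zeta$-derivatives up to order $\mathfrak c_d$ in the symbol norm suffice: integrating by parts in $\zeta$ converts them into decay of the Weyl kernel in $|x-y|$, and one needs $\mathfrak c_d>d$ for the resulting kernel to lie in $L^1$ uniformly on each dyadic shell --- this is exactly why the symbol norm is built with a dimension-dependent number of $\zeta$-derivatives. Also, because the cutoff in \eqref{para:def} forces $|\xi|\sim|\eta|$, the identity $P_\ell T_a=P_\ell T_aP_{[\ell-C,\ell+C]}$ holds for \emph{every} $\ell\ge-1$ with no low-frequency remainder; the ``harmless low-frequency contributions'' you invoke never arise (the degenerate point $\xi=\eta=0$ is handled in the paper by always applying the lemma through $P_{\ge0}$).

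For (ii), the displayed identity
\begin{equation*}
P_\ell H(f,g)=\sum_{\substack{|j-k|\le C\\\max(j,k)\ge\ell-C}}P_\ell(P_jf\cdot P_kg)
\end{equation*}
is not literally correct: the relevant multiplier $1-\psi_{\le-10}(|\xi-\eta|/|\xi+\eta|)-\psi_{\le-10}(|\eta|/|2\xi-\eta|)$ vanishes \emph{identically} once $|j-k|$ exceeds a fixed constant (so the sum does localize to a diagonal band of bounded width), but inside that band it is a bounded smooth symbol rather than the constant $1$, so $P_\ell H(f,g)$ is a sum of bilinear pseudoproducts $P_\ell B_m(P_jf,P_kg)$ rather than plain products. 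Since $m$ has a uniformly $L^1$ kernel on each dyadic block, your H\"older estimate $\|\cdot\|_{L^2}\ls\|P_jf\|_{L^\infty}\|P_kg\|_{L^2}$ still holds, and the rest of the argument --- Bernstein, $\ell^2$ convolution with the summable weight $2^{(\ell-j)s}$ under $s>m\ge0$ --- goes through unchanged. So the ``bookkeeping obstacle'' you flag in your own closing paragraph is indeed routine, and there is no missing idea.
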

\begin{proof}
The proofs see \cite[Lemma 3.11, 3.13]{Zheng22}.
\end{proof}

\begin{definition}
Given symbols $a_1,\cdots,a_n$, define the error operator
\begin{equation}\label{error:def}
E(a_1,\cdots,a_n):=T_{a_1}\cdots T_{a_n}-T_{a_1\cdots a_n}.
\end{equation}
\end{definition}

\begin{lemma}\label{lem:error}
For fixed $s,m_j\in\R$, we have
\begin{equation*}
\|E(a_1,\cdots,a_n)f\|_{H^s(\R^d)}\ls\prod_{j=1}^n(\|a_j\|_{\sL_{m_j}^\infty}
+\|\nabla_xa_j\|_{\sL_{m_j}^\infty})\|f\|_{H^{s+\sum_{j=1}^nm_j-1}(\R^d)}.
\end{equation*}
\end{lemma}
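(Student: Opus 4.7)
The plan is to argue by induction on $n$, reducing everything to the fundamental bilinear case $E(a_1,a_2)=T_{a_1}T_{a_2}-T_{a_1a_2}$, which encodes the one-derivative gain of the paradifferential composition over naive operator composition. The inductive step uses the telescoping identity
\begin{equation*}
E(a_1,\ldots,a_n)=T_{a_1}\,E(a_2,\ldots,a_n)+E(a_1,\; a_2\cdots a_n),
\end{equation*}
which splits the $n$-fold error into an already-controlled $(n{-}1)$-fold error hit by a single paradifferential operator, plus a bilinear error involving $a_1$ and the product symbol $a_2\cdots a_n$.

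For the base case $n=2$, I would plug two copies of \eqref{para:def} into the Fourier-side expression for $T_{a_1}T_{a_2}f$, obtaining a double integral with two cutoffs $\psi_{\le-10}$ and symbols evaluated at the midpoints $(\xi+\eta)/2$ and $(\eta+\sigma)/2$. Then I would Taylor-expand $a_1,a_2$ in the $x$-variable around a common frequency-midpoint and use that, on the support of the two cutoffs, $|\xi-\eta|$ and $|\eta-\sigma|$ are much smaller than $|\xi|\sim|\eta|\sim|\sigma|$. The zeroth-order term in the Taylor expansion reproduces $T_{a_1a_2}$, while each first-order correction brings in either $\nabla_x a_1$ or $\nabla_x a_2$ together with a compensating factor $(|\xi|+|\eta|)^{-1}$ from integration by parts. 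This yields the bilinear estimate
\begin{equation*}
\|E(a_1,a_2)f\|_{H^s}\ls\prod_{j=1}^{2}\bigl(\|a_j\|_{\sL^\infty_{m_j}}+\|\nabla_x a_j\|_{\sL^\infty_{m_j}}\bigr)\,\|f\|_{H^{s+m_1+m_2-1}},
\end{equation*}
after applying Lemma~\ref{lem:paraL2}(i) to the remainder symbol (which lies in $\sL^\infty_{m_1+m_2-1}$).

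For the inductive step, the first term in the telescoping identity is estimated by Lemma~\ref{lem:paraL2}(i) applied to $T_{a_1}$, costing $m_1$ derivatives, followed by the inductive hypothesis for $E(a_2,\ldots,a_n)$, costing $\sum_{j\ge2}m_j-1$ derivatives; the total loss is $\sum_{j=1}^n m_j-1$, as required. The second term is the bilinear base case applied to $a_1$ and the product $a_2\cdots a_n$, using the submultiplicative bound
\begin{equation*}
\|a_2\cdots a_n\|_{\sL^\infty_{m_2+\cdots+m_n}}+\|\nabla_x(a_2\cdots a_n)\|_{\sL^\infty_{m_2+\cdots+m_n}}\ls\prod_{j=2}^{n}\bigl(\|a_j\|_{\sL^\infty_{m_j}}+\|\nabla_x a_j\|_{\sL^\infty_{m_j}}\bigr),
\end{equation*}
which follows from the Leibniz rule and the definition of the symbol norm.

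The main obstacle is the bookkeeping in the bilinear case: one must genuinely extract the factor $(|\xi|+|\eta|)^{-1}$ (the source of the one-derivative gain) from the paradifferential cutoff $\psi_{\le-10}(|\xi-\eta|/|\xi+\eta|)$ in \eqref{para:def}, and verify that the $x$-derivatives produced by the Taylor expansion are absorbed by the $\|\nabla_x a_j\|_{\sL^\infty_{m_j}}$ piece of the norm while the symbol-class index is lowered by exactly one. This is the standard Weyl paradifferential composition calculation and can be carried out as in \cite[Lemma~3.15]{Zheng22}; once it is in place, the induction itself is routine.
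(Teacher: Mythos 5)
Your proposed argument is correct, and it coincides in substance with what the paper does: the paper supplies no proof at all for this lemma and simply cites \cite[Lemma~3.15]{Zheng22}, which is exactly where the bilinear Weyl paradifferential composition estimate (your base case) and its $n$-fold version are established. Your telescoping identity
$E(a_1,\ldots,a_n)=T_{a_1}E(a_2,\ldots,a_n)+E(a_1,a_2\cdots a_n)$
is algebraically exact, the submultiplicative bound on symbol norms follows from Leibniz's rule in both $x$ and $\zeta$ together with $(1+|\zeta|)^{-\sum_{j\ge2}m_j}=\prod_{j\ge2}(1+|\zeta|)^{-m_j}$, and the derivative count is consistent: the first term loses $m_1$ from $T_{a_1}$ (Lemma~\ref{lem:paraL2}(i)) plus $\sum_{j\ge2}m_j-1$ from the inductive hypothesis, and the second term loses $m_1+\bigl(\sum_{j\ge2}m_j\bigr)-1$ from the bilinear case. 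Since both your base case and the cited source rest on the same Fourier-side calculation for $E(a_1,a_2)$, the only genuine addition you make over the paper is making the inductive reduction explicit, which is a harmless bookkeeping layer rather than a different route. One small imprecision in your sketch: calling the bilinear remainder a ``symbol in $\sL^\infty_{m_1+m_2-1}$'' is not quite the right structure (the error is a bilinear pseudoproduct whose kernel, not a single Weyl symbol, carries the $(1+|\zeta|)^{-1}$ gain through the Poisson-bracket-type terms involving $\nabla_\zeta a_j\cdot\nabla_x a_k$), but this does not affect the validity of the resulting $H^s$ bound.
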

\begin{proof}
The proof sees \cite[Lemma 3.15]{Zheng22}.
\end{proof}

\section{Higher order energy estimate}

\subsection{Good unknown}

Without loss of generality, we assume that $F(u,\p u,\p^2u)$ in \eqref{KG} is independent of $\p_t^2u$
and is linear in $\p\p_xu$, which has the following form
\begin{equation}\label{nonlinear}
F(u,\p u,\p\p_xu)=2\sum_{j=1}^dQ^{0j}(u,\p u)\p^2_{tj}u
+\sum_{j,l=1}^dQ^{jl}(u,\p u)\p^2_{jl}u+S(u, \p u),
\end{equation}
where $Q^{0j}(0,0)=Q^{jl}(0,0)=0$, $S(u, \p u)$ is quadratic in $(u, \p u)$.

Let $u$ be the real-valued solution to \eqref{KG}. As in \cite{IP14}, set
\begin{equation}\label{U:def}
U_\pm:=(\p_t\pm i\Lambda)u,\quad U:=U_+.
\end{equation}
In addition, we introduce the good unknown
\begin{equation}\label{good:unknown}
\cU:=\p_tu-iT_{Q^{0j}\zeta_j}u+iT_{\sqrt{1+q}}\Lambda u,
\end{equation}
where $q(x,\zeta):=(Q^{jl}+Q^{0j}Q^{0l})\zeta_j\zeta_l\Lambda^{-2}(\zeta)$, the
summations $\ds\sum_{j=1}^d$ in  $T_{Q^{0j}\zeta_j}u$ of \eqref{good:unknown} and $\ds\sum_{j,l=1}^d$ in $q(x,\zeta)$ are ignored.

In this section, we are devoted to establishing the following higher order energy estimate.
\begin{proposition}\label{prop:high:energy}
Let $N$ be given in Theorem \ref{thm1} and $\|U\|_{H^N}$ be sufficiently small. Then it holds that
\begin{equation*}
\begin{split}
\|P_{\ge1}\Lambda^N\cU(t)\|^2_{L^2(\R^d)}&\ls\int_1^t\Big(\sum_{k\ge-1}
2^{k(2d+5+1/8)}\|P_kU(s)\|_{L^\infty(\R^d)}\Big)^2\|U(s)\|^2_{H^N(\R^d)}ds\\
&\quad+\|U(1)\|^2_{H^N(\R^d)}+\|U(t)\|^3_{H^N(\R^d)}.
\end{split}
\end{equation*}
\end{proposition}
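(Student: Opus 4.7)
The plan has four conceptual stages: derive the equation for the good unknown $\cU$, set up the energy identity and kill the self-adjoint part, apply a normal form to the bilinear remainder, and finally estimate the trilinear expressions using the multiplier bounds from Appendix A.

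First, I would paralinearize $F$ via Bony's decomposition $fg=T_fg+T_gf+H(f,g)$ from Lemma \ref{lem:paraL2}, applying it to each coefficient $Q^{0j}(u,\p u)$, $Q^{jl}(u,\p u)$ against the corresponding $\p^2_{tj}u$, $\p^2_{jl}u$. After substituting $\p_t^2u=-\Lambda^2u+F$ so that all second derivatives are reorganized inside paradifferential symbols, the algebra of Appendix B should produce
\begin{equation*}
(\p_t-iT_{Q^{0j}\zeta_j}+iT_{\sqrt{1+q}}\Lambda)\cU=\cQ[U,\bar U]+\cR,
\end{equation*}
where $\cQ$ is a sum of bilinear terms $B^{\pm\pm}(U^\pm,U^\pm)$ with smooth symbols satisfying the estimates of Appendix A, and $\cR$ is a cubic remainder controlled directly in $H^N$. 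The decisive gain is that $\cR$ and $\cQ$ no longer contain second derivatives of $u$, so we can close energy at level $N$.

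Next, apply $P_{\ge1}\Lambda^N$ and pair with $P_{\ge1}\Lambda^N\cU$ in $L^2$. Since $Q^{0j}\zeta_j$ and $\sqrt{1+q}\,|\zeta|$ are real-valued, Lemma \ref{lem:para}(i) gives that $T_{Q^{0j}\zeta_j}$ and $T_{\sqrt{1+q}}\Lambda$ are self-adjoint, and their contributions (premultiplied by $\pm i$) vanish from the real part of the inner product. The commutators $[P_{\ge1}\Lambda^N,T_{\sqrt{1+q}}\Lambda]$ and $[P_{\ge1}\Lambda^N,T_{Q^{0j}\zeta_j}]$ are controlled by Lemma \ref{lem:error} with a one-derivative gain, producing errors of size $\|\nabla Q^\ast\|_{L^\infty}\|\cU\|_{H^N}^2$ that are bilinear in $U$ and of the desired shape. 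The cubic remainder $\cR$ contributes directly to $\|U\|_{H^N}^3$ after time integration.

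The main work is on the bilinear term $\cQ$. Writing it on the Fourier side, each contribution is a trilinear time integral with phase $\Phi^{\pm\pm\pm}(\xi,\eta)=\mp\w{\xi}\pm\w{\xi-\eta}\pm\w{\eta}$. The Klein-Gordon mass ensures $|\Phi^{\pm\pm\pm}|\gtrsim1$ uniformly in all sign choices, so we divide by $\Phi$ and integrate by parts in $s$:
\begin{equation*}
\int_1^te^{is\Phi}(\cdots)\,ds=\Big[\frac{e^{is\Phi}}{i\Phi}(\cdots)\Big]_1^t-\int_1^t\frac{e^{is\Phi}}{i\Phi}\p_s(\cdots)\,ds.
\end{equation*}
The boundary terms are bilinear in $U$ at $s=1$ and $s=t$; paired with $P_{\ge1}\Lambda^N\cU$ they contribute $\|U(1)\|_{H^N}^2+\|U(t)\|_{H^N}^3$. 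In the interior term, $\p_sU^\pm=\pm i\Lambda U^\pm+F$ converts the expression into a trilinear form in $U$.

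Finally, decompose each $U$-factor dyadically: place the two lowest-frequency blocks in $L^\infty$ and pair the highest-frequency block with $P_{\ge1}\Lambda^N\cU$ in $L^2$ via $\|U\|_{H^N}$. Tracking the multiplier norms through Appendix A together with the Bernstein inequality, the paradifferential loss from Lemma \ref{lem:paraL2}, and the division by $\Phi$, each $L^\infty$-block at frequency $2^k$ costs at most $2^{k(2d+5+1/8)}$: the $2d+5$ collects derivative losses (two from the quasilinear $\p^2u$, the rest from kernel bounds and Weyl quantization), and the extra $1/8$ is a fractional gain from interpolation in the near-resonant region where $|\Phi|$ is small compared to the natural frequency scale. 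Summing in the dyadic indices, Cauchy-Schwarz in time, and the paradifferential bound $\|\cU-U\|_{H^N}\ls\|U\|_{H^N}^2$ (used to swap $\cU$ for $U$ on the right) yield the stated inequality. The main obstacle is this last step: keeping the multiplier weight exactly at $2^{k(2d+5+1/8)}$ rather than something larger, since it is precisely this exponent that will match against the Strichartz norm of $P_kU$ in the next sections; any slack here would break the logarithmic-in-$t$ growth needed for the $d=2$ lifespan $e^{\kappa/\ve^2}$.
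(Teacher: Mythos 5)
Your overall architecture matches the paper's: introduce the good unknown $\cU$, reduce to $(\p_t-iT_{Q^{0j}\zeta_j+\sqrt{1+q}\Lambda(\zeta)})\cU=\cS+\cQ+\cC$ via Bony paralinearization and the substitution $\p_t^2u=-\Lambda^2u+F$, kill the self-adjoint part in the $L^2$ pairing with $P_{\ge1}\Lambda^N\cU$, normal-form the quadratic remainder using the Klein--Gordon phase $\Phi_{\mu\nu}$, and estimate the resulting trilinear integrals dyadically with the lowest-frequency factor(s) in $L^\infty$. That is the paper's route, so you have the right plan. But two places in your argument have genuine gaps.

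First, the claim that the Klein--Gordon mass gives $|\Phi_{\mu\nu}|\gtrsim1$ uniformly is false. Take $\mu=\nu=+$ and $\xi_1=\xi_2=Re_1$: then $\Phi_{++}=-\sqrt{1+4R^2}+2\sqrt{1+R^2}\approx 3/(4R)\to0$. What the mass gives you is that $\Phi_{\mu\nu}$ never vanishes, but only the weaker lower bound recorded in Lemma \ref{lem:phase}, $|\Phi_{\mu\nu}^{-1}|\ls 1+\min\{|\xi|,|\eta|,|\xi-\eta|\}$. This distinction is not cosmetic: the growth of $|\Phi^{-1}|$ with the smallest frequency is the dominant contribution to the factor $2^{k(2d+3)}$ in the bilinear kernel bounds \eqref{bilinear:a}--\eqref{bilinear:e}, and hence to the exponent $2d+5+1/8$ you are trying to produce. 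If you carried out the kernel estimates with a uniform $|\Phi|\gtrsim1$ you would get a different and incorrect exponent, and the Strichartz argument in the next section would not close with the stated $N$. (Likewise, the $1/9\to1/8$ cushion in the paper is just summability slack in the $\cX_k$ sums, not an interpolation gain in a near-resonant region.)

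Second, your treatment of the commutator $[P_{\ge1}\Lambda^N,iT_{Q^{0j}\zeta_j+(\sqrt{1+q}-1)\Lambda(\zeta)}]\cU$ does not close. Applying Lemma \ref{lem:error} and pairing with $P_{\ge1}\Lambda^N\cU$ gives at best $\|\nabla Q\|_{L^\infty}\|\cU\|_{H^N}^2$, which is cubic in $U$; after time integration this is $\int_1^t\|U\|_{W^{1,\infty}}\|U\|^2_{H^N}\,ds$, and under the Strichartz bootstrap $\|U\|_{L^2_tW^{n,\infty}}\ls c_d(t)\ve$ this only controls it by $t^{1/2}c_d(t)\ve^3$, which grows and is \emph{not} dominated by the quartic integrand in the proposition. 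The paper's $E_\cQ$ in \eqref{nonlin:energey} deliberately lumps the commutator together with $\cQ$ and feeds both through the same normal form in Lemma \ref{lem:qua:energy} (integration by parts in $s$, then the additional care in \eqref{qua:energy8}--\eqref{qua:energy11} to symmetrize the trilinear operator against $T_{q_1}$ so that it gains one factor of low frequency). You need that step; a direct commutator bound is not ``of the desired shape.'' Relatedly, the sentence ``the cubic remainder contributes directly to $\|U\|_{H^N}^3$ after time integration'' misattributes the source of the $\|U(t)\|^3_{H^N}$ term: that comes from the normal-form boundary term at $s=t$, while the cubic remainder $\cC$, paired with $\cU$, is quartic and goes into the time integral.
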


In order to prove Proposition \ref{prop:high:energy}, we now give some auxiliary results.

\begin{lemma}\label{lem:goodunknown}
Under the assumptions of Proposition \ref{prop:high:energy}, we have
\begin{equation}\label{good:unknown1}
\begin{split}
\|P_{\ge0}(\cU-U)\|_{H^N(\R^d)}&\ls\|U\|_{W^{3,\infty}(\R^d)}\|U\|_{H^N(\R^d)}
\ls\|U\|^2_{H^N(\R^d)},\\
|q(x,\zeta)|&\le1/2.
\end{split}
\end{equation}
\end{lemma}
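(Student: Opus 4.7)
The plan is to unfold the definitions of $U$ in \eqref{U:def} and $\cU$ in \eqref{good:unknown}, express $\cU-U$ as a sum of paradifferential operators acting on $u$, and then apply Lemma~\ref{lem:paraL2}(i). Using the identity $T_1 = \mathrm{Id}$ from Lemma~\ref{lem:para}(ii) together with the linearity of the quantization map, one obtains
\begin{equation*}
\cU - U = -iT_{Q^{0j}\zeta_j}\,u + iT_{\sqrt{1+q}-1}\,\Lambda u,
\end{equation*}
which reduces the task to two paradifferential estimates.

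For the bound $|q(x,\zeta)|\le 1/2$, I will observe that $|\zeta_j\zeta_l\Lambda^{-2}(\zeta)|\le 1$ since $\Lambda^2(\zeta)=1+|\zeta|^2$, so $|q|\ls |Q^{jl}|+|Q^{0j}|^2$; since $Q^{jl}(0,0)=Q^{0j}(0,0)=0$ and both are smooth, $|Q^{jl}|,|Q^{0j}|\ls \|(u,\p u)\|_{L^\infty}$, and this is small once the hypothesis $\|U\|_{H^N}\ll 1$ is combined with the Sobolev embedding $H^N\hookrightarrow W^{3,\infty}$ (valid since $N\ge 2d+[d/2]+6 > d/2+3$). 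The same smallness justifies a smooth Taylor expansion of $\sqrt{1+q}-1$ in $q$.

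Next, I will estimate the two symbol norms. Because the symbol norm $\|\cdot\|_{\sL_m^\infty}$ only differentiates in $\zeta$ (leaving the $x$-dependent coefficients $Q^{0j},Q^{jl}$ untouched), a direct computation gives $\|Q^{0j}\zeta_j\|_{\sL_1^\infty}\ls\|Q^{0j}\|_{L^\infty}$; a chain-rule expansion of $s\mapsto\sqrt{1+s}-1$ on $|s|\le 1/2$, together with the homogeneity bound $|D_\zeta^\alpha(\zeta_j\zeta_l/\Lambda^2(\zeta))|\ls\langle\zeta\rangle^{-|\alpha|}$, yields $\|\sqrt{1+q}-1\|_{\sL_0^\infty}\ls \|q\|_{\sL_0^\infty}\ls\|(u,\p u)\|_{L^\infty}$. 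Inserting these bounds into Lemma~\ref{lem:paraL2}(i) and using $\Lambda u=\mathrm{Im}\,U$, so that both $\|u\|_{H^{N+1}}$ and $\|\Lambda u\|_{H^N}$ are $\ls\|U\|_{H^N}$, produces
\begin{equation*}
\|P_{\ge 0}(\cU-U)\|_{H^N}\ls \|(u,\p u)\|_{L^\infty}\|U\|_{H^N}.
\end{equation*}

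The remaining task, and the main obstacle, is to convert $\|(u,\p u)\|_{L^\infty}$ into $\|U\|_{W^{3,\infty}}$. Since $\p_t u=\mathrm{Re}\,U$ and $u=\Lambda^{-1}\mathrm{Im}\,U$ is a convolution with the $L^1$ Bessel kernel, the bounds $\|\p_tu\|_{L^\infty}+\|u\|_{L^\infty}\ls\|U\|_{L^\infty}$ are immediate; however, $\p_j u=\p_j\Lambda^{-1}\mathrm{Im}\,U$ involves a Riesz-type multiplier that is \emph{not} bounded on $L^\infty$. I will resolve this via a Littlewood--Paley decomposition: for each $k\ge 0$, the symbol of $\p_j\Lambda^{-1}$ is uniformly bounded on the support of $\psi_k$, so $\|P_k\p_j u\|_{L^\infty}\ls\|P_kU\|_{L^\infty}$, and the Bernstein bound $\|P_kU\|_{L^\infty}\ls 2^{-k}\|\nabla U\|_{L^\infty}$ produces a geometric series summing to $\|\p u\|_{L^\infty}\ls\|U\|_{W^{1,\infty}}\le\|U\|_{W^{3,\infty}}$. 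Substituting back yields the first inequality in \eqref{good:unknown1}; the second follows from the same Sobolev embedding $\|U\|_{W^{3,\infty}}\ls\|U\|_{H^N}$.
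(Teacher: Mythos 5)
Your proposal is correct and its outcome matches the paper's, but the intermediate route is a little different and, if anything, slightly cleaner. The paper expands $\cU-U=-iT_{Q^{0j}\zeta_j\Lambda^{-1}(\zeta)}\Lambda u-iE(Q^{0j}\zeta_j,\Lambda^{-1}(\zeta))\Lambda u+iT_{\sqrt{1+q}-1}\Lambda u$, i.e.\ it inserts $u=T_{\Lambda^{-1}(\zeta)}\Lambda u$ and pays with the error operator $E(Q^{0j}\zeta_j,\Lambda^{-1}(\zeta))$; invoking Lemma~\ref{lem:error} for that term then forces the appearance of $\|\nabla_x(Q^{0j}\zeta_j)\|_{\sL^\infty_1}$, hence the $W^{1,\infty}$ bound on the coefficients. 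You instead keep $\cU-U=-iT_{Q^{0j}\zeta_j}u+iT_{\sqrt{1+q}-1}\Lambda u$ and apply Lemma~\ref{lem:paraL2}(i) directly with order $m=1$, paying the extra derivative on the function side via $\|u\|_{H^{N+1}}=\|\Lambda u\|_{H^N}\le\|U\|_{H^N}$; this avoids Lemma~\ref{lem:error} altogether and only requires an $L^\infty$ control on $Q^{0j}$ rather than $W^{1,\infty}$. Both are legitimate, and both land on $\ls\|U\|_{W^{3,\infty}}\|U\|_{H^N}$ after the Sobolev embedding. One genuine merit of your write-up is that you make explicit the step $\|\p_x u\|_{L^\infty}\ls\|U\|_{W^{1,\infty}}$ (the multiplier $\p_j\Lambda^{-1}$ is not bounded on $L^\infty$, and your dyadic/inverse-Bernstein summation is the right way to recover the bound), a point the paper implicitly assumes. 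The treatment of $|q|\le1/2$ coincides with the paper's. Minor remarks only: you should keep the truncation $P_{\ge0}$ throughout (the quantization is only defined away from $\xi=\eta=0$), and you should note that the $k=-1$ piece of the dyadic sum is handled by the compactly supported multiplier alone; neither affects correctness.
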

\begin{remark}
Thanks to $|q(x,\zeta)|\le1/2$ in \eqref{good:unknown1}, then $T_{\sqrt{1+q}}\Lambda u$ in \eqref{good:unknown}
is well defined.
\end{remark}

\begin{proof}
It follows from Lemma \ref{lem:para} (ii) and the definitions \eqref{U:def}-\eqref{good:unknown} that
\begin{equation*}
\cU-U=-iT_{Q^{0j}\zeta_j\Lambda^{-1}(\zeta)}\Lambda u
-iE(Q^{0j}\zeta_j,\Lambda^{-1}(\zeta))\Lambda u+iT_{\sqrt{1+q}-1}\Lambda u.
\end{equation*}
Applying Lemmas \ref{lem:paraL2} and \ref{lem:error} yields
\begin{equation*}
\begin{split}
\|P_{\ge0}(\cU-U)\|_{H^N(\R^d)}&\ls(\|Q^{0j}\|_{W^{1,\infty}(\R^d)}
+\|\sqrt{1+q}-1\|_{\sL_0^\infty})\|U\|_{H^N(\R^d)}\\
&\ls(\|Q^{j\alpha}\|_{W^{1,\infty}(\R^d)}+\|Q^{0j}\|^2_{L^\infty(\R^d)})\|U\|_{H^N(\R^d)}\\
&\ls\|U\|_{W^{3,\infty}(\R^d)}\|U\|_{H^N(\R^d)},
\end{split}
\end{equation*}
which derives the first line in \eqref{good:unknown1}. On the other hand,
the second line in \eqref{good:unknown1} is obvious by $Q^{0j}(0,0)=Q^{jl}(0,0)=0$
and the smallness of $(u,\p u)$.
\end{proof}

By definition \eqref{good:unknown}, the equation in \eqref{KG} is reduced to
\begin{equation}\label{cU:eqn}
(\p_t-iT_{Q^{0j}\zeta_j+\sqrt{1+q}\Lambda(\zeta)})\cU=\cS+\cQ+\cC,
\end{equation}
where
\begin{equation}\label{cU:eqn1}
\begin{split}
\cS&:=S(u,\p u)+2H(Q^{0j},\p^2_{tj}u)+H(Q^{jl},\p^2_{jl}u),\\
\cQ&:=2T_{\p^2_{tj}u}Q^{0j}+T_{\p^2_{jl}u}Q^{jl}
-iT_{\cF_1\zeta_j\Lambda^{-1}(\zeta)}\Lambda u+iT_{\cF_1}\Lambda u
+2iE(Q^{0j},\zeta_j)\p_tu\\
&\quad-E(Q^{jl},\zeta_j\zeta_l\Lambda^{-1}(\zeta))\Lambda u
-iE(\cF_1\zeta_j,\Lambda^{-1}(\zeta))\Lambda u+iE(q/2,\Lambda(\zeta))\p_tu\\
&\quad-E(\Lambda(\zeta),Q^{0j}\zeta_j,\Lambda^{-1}(\zeta))\Lambda u
+E(\Lambda(\zeta),q/2)\Lambda u,\\
\cC&:=-E(Q^{0j}\zeta_j,Q^{0l}\zeta_l,\Lambda^{-1}(\zeta))\Lambda u
+iE(\sqrt{1+q}-1-q/2,\Lambda(\zeta))\p_tu\\
&\quad+E(Q^{0j}\zeta_j,\sqrt{1+q}-1)\Lambda u
-E((\sqrt{1+q}-1)\Lambda(\zeta),Q^{0j}\zeta_j,\Lambda^{-1}(\zeta))\Lambda u\\
&\quad+E((\sqrt{1+q}-1)\Lambda(\zeta),\sqrt{1+q}-1)\Lambda u
+E(\Lambda(\zeta),\sqrt{1+q}-1-q/2)\Lambda u\\
&\quad-iT_{\cF_2\zeta_j\Lambda^{-1}(\zeta)}\Lambda u
-iE(\cF_2\zeta_j,\Lambda^{-1}(\zeta))\Lambda u
+iT_{((1+q)^{-1/2}-1)\cF_1+(1+q)^{-1/2}\cF_2}\Lambda u,
\end{split}
\end{equation}
and $\cF_1(0,0,0)=0$, $\cF_1=\cF_1(u,\p u,\p\p_xu)$ is linear in $(u,\p u,\p\p_xu)$, $\cF_2=\cF_2(u,\p u,\p\p_xu)$ is quadratic in $(u,\p u,\p\p_xu)$.
The proof of \eqref{cU:eqn} is put in Appendix \ref{section:B}.

By Lemma \ref{lem:para} (i) and \eqref{cU:eqn}, it is easy to get that $\w{T_{Q^{0j}\zeta_j+\sqrt{1+q}\Lambda(\zeta)}f,f}$ is real and
\begin{equation}\label{high:energy}
\begin{split}
\frac{d}{dt}\|P_{\ge1}\Lambda^N\cU\|^2_{L^2(\R^d)}
&=2\Re\w{(\p_t-iT_{Q^{0j}\zeta_j+\sqrt{1+q}\Lambda(\zeta)})
P_{\ge1}\Lambda^N\cU,P_{\ge1}\Lambda^N\cU}\\
&=2(E_\cS+E_\cQ+E_\cC),\\
\end{split}
\end{equation}
where $\w{f,g}:=\int_{\R^d}f(x)\overline{g(x)}dx$ and
\begin{equation}\label{nonlin:energey}
\begin{split}
E_\cS&=\Re\w{P_{\ge1}\Lambda^N\cS,P_{\ge1}\Lambda^NU},\\
E_\cQ&=\Re\w{[P_{\ge1}\Lambda^N,iT_{Q^{0j}\zeta_j+(\sqrt{1+q}-1)\Lambda(\zeta)}]\cU
+P_{\ge1}\Lambda^N\cQ,P_{\ge1}\Lambda^N\cU},\\
E_\cC&=\Re\w{P_{\ge1}\Lambda^N\cS,P_{\ge1}\Lambda^N(\cU-U)}
+\Re\w{P_{\ge1}\Lambda^N\cC,P_{\ge1}\Lambda^N\cU}.
\end{split}
\end{equation}

\begin{lemma}\label{lem:quart:energy}
Under the assumptions of Proposition \ref{prop:high:energy}, we then have
\begin{equation}\label{quart:energy}
|E_\cC|\ls\|U\|^2_{W^{3,\infty}(\R^d)}\|U\|^2_{H^N(\R^d)}.
\end{equation}
\end{lemma}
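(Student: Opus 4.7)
The plan is to decompose $E_\cC$ into its two pieces and bound each by Cauchy--Schwarz, so that the task reduces to two $H^N$ bounds: one on $\cU-U$ paired against the quadratic nonlinearity $\cS$, and one on $\cC$ paired against $\cU$. For the pairing $\w{P_{\ge1}\Lambda^N \cS, P_{\ge1}\Lambda^N(\cU-U)}$, Lemma~\ref{lem:goodunknown} already gives $\|P_{\ge0}(\cU-U)\|_{H^N}\ls \|U\|_{W^{3,\infty}}\|U\|_{H^N}$, so the high-frequency projection of $\cU-U$ carries two factors of $U$. It therefore suffices to bound $\|\cS\|_{H^N}$ by $\|U\|_{W^{3,\infty}}\|U\|_{H^N}$: the $S(u,\p u)$ piece is an ordinary quadratic nonlinearity, controlled by the product rule $\|fg\|_{H^N}\ls \|f\|_{L^\infty}\|g\|_{H^N}+\|f\|_{H^N}\|g\|_{L^\infty}$, and the two remainder terms $H(Q^{0j},\p^2_{tj}u)$ and $H(Q^{jl},\p^2_{jl}u)$ are handled directly by Lemma~\ref{lem:paraL2}(ii) after recalling that $Q^{0j}, Q^{jl}$ vanish at the origin and are therefore $O(\|U\|_{W^{m,\infty}})$. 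Multiplying the two bounds produces the desired quartic expression.

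For the pairing $\w{P_{\ge1}\Lambda^N\cC, P_{\ge1}\Lambda^N\cU}$, I would first note that $\|P_{\ge1}\Lambda^N\cU\|_{L^2}\ls \|U\|_{H^N}$, which follows from $\cU=U+(\cU-U)$ together with the $H^N$ bound on $\cU-U$ from Lemma~\ref{lem:goodunknown} and the smallness assumption on $\|U\|_{H^N}$. It then remains to show $\|\cC\|_{H^N}\ls \|U\|^2_{W^{3,\infty}}\|U\|_{H^N}$, which will be verified term by term using Lemmas~\ref{lem:paraL2} and \ref{lem:error}, combined with the key observation that every symbol appearing in $\cC$ is genuinely quadratic (or higher) in the small quantities built out of $U$.

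The systematic accounting is as follows: each factor of $Q^{0j}$, $Q^{jl}$ or $q$ contributes one factor of $\|U\|_{W^{k,\infty}}$ (choosing $k\le 3$ to accommodate the derivative loss in the symbol norms $\sL^\infty_m$), because $Q^{0j}(0,0)=Q^{jl}(0,0)=0$ and $q\sim (Q+QQ)\zeta^2\Lambda^{-2}$. The terms involving $\sqrt{1+q}-1-q/2$ are at least quadratic in $q$, hence contribute two such factors; the terms $E(Q^{0j}\zeta_j,Q^{0l}\zeta_l,\cdot)$ and $E(Q^{0j}\zeta_j,\sqrt{1+q}-1)$ also carry two factors of $Q$; and the $\cF_2$-based terms inherit their smallness from the fact that $\cF_2$ is quadratic in $(u,\p u,\p\p_x u)$, while the $((1+q)^{-1/2}-1)\cF_1$ term pairs one $q$ with the linear $\cF_1$. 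Lemma~\ref{lem:error} provides exactly one unit of smoothing for each error operator, which is what allows the remaining factor to be measured in $H^N$ (applied to $\Lambda u$ or $\p_t u$, both of which are bounded by $\|U\|_{H^N}$ up to a harmless constant).

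The main potential obstacle is purely bookkeeping: one must verify, for each of the roughly ten summands in \eqref{cU:eqn1}, that the orders $m_j$ of the symbols sum up so that the $-1$ from Lemma~\ref{lem:error} precisely compensates the order of $\Lambda u$ or $\p_t u$, keeping the $H^N$ regularity balanced. In particular, at most three derivatives fall on any single factor of $U$ (hence the $W^{3,\infty}$ norm), and the remaining regularity lands on the $H^N$ factor. Once this counting is carried out uniformly across all terms of $\cC$, the product of the symbol norm bounds with $\|\Lambda u\|_{H^N}\ls \|U\|_{H^N}$ yields $\|\cC\|_{H^N}\ls \|U\|^2_{W^{3,\infty}}\|U\|_{H^N}$, and combining this with the $\|\cU\|_{H^N}$ bound concludes the proof of \eqref{quart:energy}.
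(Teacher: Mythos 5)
Your proposal is correct and follows the same route as the paper: decompose $E_\cC$ according to \eqref{nonlin:energey}, apply Cauchy--Schwarz in each pairing, invoke Lemma~\ref{lem:goodunknown} for $\|P_{\ge0}(\cU-U)\|_{H^N}$ and (together with smallness) for $\|P_{\ge1}\Lambda^N\cU\|_{L^2}\ls\|U\|_{H^N}$, and bound $\|\cS\|_{H^N}\ls\|U\|_{W^{3,\infty}}\|U\|_{H^N}$ and $\|\cC\|_{H^N}\ls\|U\|^2_{W^{3,\infty}}\|U\|_{H^N}$ term by term via Lemmas~\ref{lem:paraL2} and~\ref{lem:error}. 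The paper states the $H^N$ bounds on $\cS$ and $\cC$ somewhat tersely; your version spells out the symbol-by-symbol accounting, but the argument is the same.
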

\begin{proof}
It follows from Lemmas \ref{lem:paraL2} and \ref{lem:error} that
\begin{equation*}
\begin{split}
\|P_{\ge1}\Lambda^N\cS\|_{L^2(\R^d)}&\ls\|\cS\|_{H^N(\R^d)}\ls\|\p u\|_{W^{1,\infty}(\R^d)}
\|U\|_{H^N(\R^d)}\ls\|U\|_{W^{3,\infty}(\R^d)}\|U\|_{H^N(\R^d)},\\
\|P_{\ge1}\Lambda^N\cC\|_{L^2(\R^d)}&\ls\|U\|^2_{W^{3,\infty}(\R^d)}\|U\|_{H^N(\R^d)}.
\end{split}
\end{equation*}
This, together with Lemma \ref{lem:goodunknown}, yields \eqref{quart:energy}.
\end{proof}

\subsection{Energy estimate I}
For $\mu,\nu=\pm$, define the phase function
\begin{equation}\label{phase:def}
\Phi_{\mu\nu}(\xi_1,\xi_2)
:=-\Lambda(\xi_1+\xi_2)+\mu\Lambda(\xi_1)+\nu\Lambda(\xi_2).
\end{equation}
The following lemma shows that the phase function $|\Phi_{\mu\nu}|$ has a lower bound.
\begin{lemma}\label{lem:phase}
For $l\ge1$, we have
\begin{equation}\label{phi:lowerbound}
|\Phi^{-1}_{\mu\nu}(\xi-\eta,\eta)|\ls1+\min\{|\xi|,|\eta|,|\xi-\eta|\},\quad
|\nabla_{\xi_1,\xi_2}^l\Phi_{\mu\nu}|\ls\min\{1,|\Phi_{\mu\nu}|\}
\end{equation}
and
\begin{equation}\label{phi:deri:bound}
|\nabla_{\xi_1,\xi_2}^l\Phi^{-1}_{\mu\nu}|\ls|\Phi^{-1}_{\mu\nu}|.
\end{equation}
\end{lemma}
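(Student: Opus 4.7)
The plan is to first reduce the problem to the single sign case $(\mu,\nu)=(+,+)$. The case $(\mu,\nu)=(-,-)$ is essentially immediate: $|\Phi_{--}|=\Lambda(\xi_1+\xi_2)+\Lambda(\xi_1)+\Lambda(\xi_2)\ge 3$, and since $|\nabla^l\Lambda(\xi)|\ls 1$ for every $l\ge1$, both $|\nabla^l\Phi_{--}|\ls 1\sim\min\{1,|\Phi_{--}|\}$ and \eqref{phi:deri:bound} follow from direct differentiation. For the mixed cases I would introduce the change of variables $\zeta_1=\xi_1+\xi_2$, $\zeta_2=-\xi_2$ (so that $\zeta_1+\zeta_2=\xi_1$); a direct check gives $\Phi_{+-}(\xi_1,\xi_2)=-\Phi_{++}(\zeta_1,\zeta_2)$ and preserves the multiset of magnitudes $\{|\xi_1|,|\xi_2|,|\xi_1+\xi_2|\}$. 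The case $(-,+)$ is analogous, so it suffices to treat $\Phi_{++}$.

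For the lower bound on $|\Phi_{++}|$ I would use the rationalization
\[
\Phi_{++}=\frac{1+2\bigl(\Lambda(\xi_1)\Lambda(\xi_2)-\xi_1\cdot\xi_2\bigr)}{\Lambda(\xi_1)+\Lambda(\xi_2)+\Lambda(\xi_1+\xi_2)}.
\]
The elementary identity $\bigl(\Lambda(\xi_1)\Lambda(\xi_2)\bigr)^2-\bigl(|\xi_1||\xi_2|\bigr)^2=1+|\xi_1|^2+|\xi_2|^2$ gives $\Lambda(\xi_1)\Lambda(\xi_2)-\xi_1\cdot\xi_2\ge 1$, so the numerator is $\ge 3$. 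Assume without loss of generality $|\xi_1|\le|\xi_2|$. If additionally $|\xi_1|\le|\xi_1+\xi_2|$ (so the minimum equals $|\xi_1|$), dividing the identity above by $\Lambda(\xi_1)\Lambda(\xi_2)+|\xi_1||\xi_2|$ yields $\Lambda(\xi_1)\Lambda(\xi_2)-|\xi_1||\xi_2|\gtrsim(1+|\xi_2|)/(1+|\xi_1|)$; combined with the denominator bound $\ls 1+|\xi_2|$ this delivers $\Phi_{++}\gtrsim 1/(1+|\xi_1|)$. In the remaining sub-case $|\xi_1+\xi_2|<|\xi_1|\le|\xi_2|$ one has $\Lambda(\xi_2)\ge\Lambda(\xi_1+\xi_2)$, hence $\Phi_{++}\ge\Lambda(\xi_1)\ge 1\ge 1/(1+\min)$.

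The bound $|\nabla^l\Phi_{++}|\ls 1$ is immediate since $\nabla\Lambda(\xi)=\xi/\Lambda(\xi)$ is bounded by $1$ and higher derivatives decay in $|\xi|$. For the sharper $|\nabla^l\Phi_{++}|\ls|\Phi_{++}|$ I would use the quotient structure $\Phi_{++}=N/D$: writing $\nabla\Phi_{++}=(\nabla N)/D-\Phi_{++}(\nabla D)/D$, the second piece is $\ls\Phi_{++}$ because $|\nabla D|/D\ls 1$. For the first piece the key identity $\nabla_{\xi_1}N=2\Lambda(\xi_2)(w_1-w_2)$ with $w_i=\xi_i/\Lambda(\xi_i)$ and $|w_1-w_2|^2=(N-1)/(\Lambda(\xi_1)\Lambda(\xi_2))$ relates $|\nabla N|$ directly to $\sqrt{N-1}$, and together with the symmetric estimate on $\nabla_{\xi_2}N$ and the lower bound $N\gtrsim\Lambda_{\max}/\Lambda_{\min}$ obtained along the way, one concludes $|\nabla N|\ls N$, hence $|\nabla N|/D\ls\Phi_{++}$. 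Higher-order derivatives are handled by Leibniz expansion of $N/D$ together with the bounds $|\nabla^j(1/D)|\ls 1/D$ (since $D\ge 3$), reducing everything to $|\nabla^k N|\ls N$, proved by similar algebraic manipulations.

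Finally, \eqref{phi:deri:bound} follows via Fa\`a di Bruno's formula: every term in $\nabla^l(\Phi_{\mu\nu}^{-1})$ has the form $\Phi_{\mu\nu}^{-(k+1)}\prod_j\nabla^{l_j}\Phi_{\mu\nu}$ with $l_j\ge 1$ and $\sum l_j=l$, and each such product is $\ls|\Phi_{\mu\nu}|^k\cdot|\Phi_{\mu\nu}|^{-(k+1)}=|\Phi_{\mu\nu}^{-1}|$ by the previous step. The main technical obstacle I anticipate is establishing $|\nabla^k N|\ls N$ uniformly across all configurations: when the $\xi_i$ are nearly parallel with large magnitudes both $N$ and $|\nabla^k N|$ exhibit substantial cancellations, and one must verify that the rates of decay match in every regime dictated by the case analysis performed for the lower bound.
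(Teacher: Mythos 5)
Your argument is essentially correct and self-contained, whereas the paper simply cites Lemma~5.1 of \cite{IP13} for \eqref{phi:lowerbound} and invokes Leibniz's rule (equivalently, Fa\`a di Bruno) for \eqref{phi:deri:bound}; the content that you reconstruct from scratch is presumably what lives inside that reference. Your reduction to $\Phi_{++}$ via the linear substitution $\zeta_1=\xi_1+\xi_2$, $\zeta_2=-\xi_2$ is clean and preserves both the multiset of magnitudes and, being linear, the derivative counts, so it transfers all three estimates at once. The rationalization $\Phi_{++}=N/D$ with $N=1+2(\Lambda(\xi_1)\Lambda(\xi_2)-\xi_1\cdot\xi_2)$, $D=\Lambda(\xi_1)+\Lambda(\xi_2)+\Lambda(\xi_1+\xi_2)$, the case split on $\min$, and the factorization $\nabla\Phi_{++}=(\nabla N)/D-\Phi_{++}(\nabla D)/D$ leading to $|\nabla^k N|\ls N$, $|\nabla^m D^{-1}|\ls D^{-1}$ all check out. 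What you gain over the paper is that a reader need not open \cite{IP13}; what you pay is about a page of computation for a lemma the paper dispatches in two lines.

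One small correction: the displayed relation $|w_1-w_2|^2=(N-1)/(\Lambda(\xi_1)\Lambda(\xi_2))$ with $w_i=\xi_i/\Lambda(\xi_i)$ is not an identity. Since $(N-1)/(\Lambda(\xi_1)\Lambda(\xi_2))=2-2w_1\cdot w_2$ while $|w_1-w_2|^2=|w_1|^2+|w_2|^2-2w_1\cdot w_2$ and $|w_i|<1$, you only have the one-sided bound $|w_1-w_2|^2\le (N-1)/(\Lambda(\xi_1)\Lambda(\xi_2))$, with strict inequality unless both $|\xi_i|\to\infty$. Fortunately the inequality is in the direction you need, so $|\nabla_{\xi_1}N|=2\Lambda(\xi_2)|w_1-w_2|\le 2\sqrt{(N-1)\Lambda(\xi_2)/\Lambda(\xi_1)}$ still holds and, combined with $N\gtrsim\Lambda(\xi_2)/\Lambda(\xi_1)$ (valid whenever $|\xi_1|\le|\xi_2|$, in both of your sub-cases, since the derivation only uses $|\xi_1|\le|\xi_2|$), gives $|\nabla N|\ls N$ as desired. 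So the conclusion stands; just replace the word \emph{identity} by \emph{inequality}.
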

\begin{proof}
\eqref{phi:lowerbound} comes from Lemma 5.1 of \cite{IP13}.
\eqref{phi:deri:bound} can be obtained by \eqref{phi:lowerbound} and Leibniz's rules.
\end{proof}
For a function $m(\xi_1,\xi_2):\R^d\times\R^d\rightarrow\C$, define the bilinear pseudoproduct operator
\begin{equation}\label{bilinear:def}
B_m(f,g):=\sF_\xi^{-1}\int_{\R^d}m(\xi-\eta,\eta)\hat f(\xi-\eta)\hat g(\eta)d\eta.
\end{equation}
Then \eqref{KG} can be reformulated to
\begin{equation}\label{U:eqn}
(\p_t-i\Lambda)U=\cN:=\sum_{\mu,\nu=\pm}B_{a_{\mu\nu}}(U_\mu,U_\nu)+\cC_1,
\end{equation}
where $\cN$ is real, $\cC_1$ is at least cubic in $U$ and $a_{\mu\nu}=a_{\mu\nu}(\xi-\eta,\eta)$ is a linear combination of the products of
\begin{equation}\label{symbol:a}
1,\eta_j,\frac{1}{\Lambda(\eta)},\frac{1}{\Lambda(\xi-\eta)},\frac{\eta_j\eta_l}{\Lambda(\eta)},
\frac{\xi_l-\eta_l}{\Lambda(\xi-\eta)},\quad j,l=1,\cdots,d.
\end{equation}
On the other hand, \eqref{U:eqn} can be rewritten as
\begin{equation}\label{U:eqn1}
\p_tU_\mu=i\mu\Lambda U_\mu+\cN_\mu,\quad\mu=\pm,\quad\cN_\pm:=\cN.
\end{equation}

\begin{lemma}\label{lem:sem:energy}
Under the assumptions of Proposition \ref{prop:high:energy}, we then have
\begin{equation}\label{sem:energy}
\begin{split}
\Big|\int_1^tE_\cS(s)ds\Big|&\ls\sum_{k\ge-1}2^{k(2d+5+1/8)}\int_1^t
\|P_kU(s)\|_{L^\infty(\R^d)}\|U(s)\|_{W^{1,\infty}(\R^d)}\|U(s)\|^2_{H^N(\R^d)}ds\\
&\quad+\|U(1)\|^2_{H^N(\R^d)}+\|U(t)\|^3_{H^N(\R^d)}.
\end{split}
\end{equation}
\end{lemma}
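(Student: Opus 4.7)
The plan is to apply Shatah's normal form transformation to $E_\cS$: rewrite $\cS$ as a finite sum of bilinear pseudoproducts in $U_\pm$, use the non-vanishing of the phase $\Phi_{\mu\nu}$ from Lemma~\ref{lem:phase} to integrate by parts in time, and thereby trade the quadratic nonlinearity $\cS$ for boundary terms controlled by $\|U\|^3_{H^N}$ and interior quartic remainders amenable to Littlewood--Paley estimates. Concretely, using $u=(2i\Lambda)^{-1}(U_+-U_-)$ and $\p_tu=(U_++U_-)/2$ together with the remainder bound of Lemma~\ref{lem:paraL2}(ii), I would rewrite each of $S(u,\p u)$, $H(Q^{0j},\p^2_{tj}u)$ and $H(Q^{jl},\p^2_{jl}u)$ as a finite combination of bilinear operators $B_{a_{\mu\nu}}(U_\mu,U_\nu)$ whose symbols $a_{\mu\nu}$ are products of factors drawn from \eqref{symbol:a}. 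This reduces $\int_1^tE_\cS(s)\,ds$ to a finite sum of time-integrated trilinear pairings of the form $\int_1^t\w{P_{\ge1}\Lambda^NB_{a_{\mu\nu}}(U_\mu,U_\nu),P_{\ge1}\Lambda^NU}(s)\,ds$.

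Next, I would execute the normal form. Passing to profiles $\tilde U_\mu:=e^{-i\mu s\Lambda}U_\mu$, which satisfy $\p_s\tilde U_\mu=e^{-i\mu s\Lambda}\cN_\mu$, each trilinear form becomes an oscillatory Fourier integral with phase $e^{is\Phi_{\mu\nu}(\xi-\eta,\eta)}$. Writing $e^{is\Phi_{\mu\nu}}=(i\Phi_{\mu\nu})^{-1}\p_se^{is\Phi_{\mu\nu}}$---legitimate by the lower bounds \eqref{phi:lowerbound}--\eqref{phi:deri:bound}---and integrating by parts in $s$ over $[1,t]$ produces (i) boundary pairings at $s=1,t$ with modified symbol $a_{\mu\nu}/(i\Phi_{\mu\nu})$, and (ii) interior integrals in which $\p_s$ falls on some $\tilde U_\sigma$ and replaces the corresponding $U_\sigma$ by $\cN_\sigma=\sum B_{a'}(U_\mu,U_\nu)+\cC_1$, yielding genuine quartic expressions in $U$.

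For the estimates, the boundary term at $s=t$ is $\ls\|U(t)\|^3_{H^N}$ and the one at $s=1$ is $\ls\|U(1)\|^2_{H^N}$, both via the multilinear multiplier estimates of Appendix~A applied to $B_{a_{\mu\nu}/\Phi_{\mu\nu}}$. For the interior quartic term, I would apply a dyadic Littlewood--Paley decomposition to each $U$-factor, place the two highest-frequency factors in $H^N$ to absorb $\Lambda^N$, and place the remaining two in $L^\infty$ and $W^{1,\infty}$; one low-frequency factor is retained in the outer dyadic sum, producing the expression $\sum_{k\ge-1}2^{k(2d+5+1/8)}\|P_kU\|_{L^\infty}$. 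The exponent $2d+5+1/8$ is a bookkeeping total: two derivatives from the symbols in \eqref{symbol:a} and from the quadratic structure of $\cN_\sigma$, one derivative from $\Phi_{\mu\nu}^{-1}$, Bernstein losses to convert an $L^2$ factor to $L^\infty$ at a low scale, plus a small excess $2^{k/8}$ ensuring dyadic summability.

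The main obstacle is the careful bookkeeping of the modified symbols $a_{\mu\nu}/\Phi_{\mu\nu}$ and of the trilinear symbols arising once $\cN_\sigma$ is substituted: one must verify, frequency shell by frequency shell, that the $\min$-type lower bound on $|\Phi_{\mu\nu}|$ in \eqref{phi:lowerbound} is strong enough to offset the polynomial factors coming from \eqref{symbol:a} and from the quadratic part of $\cN$, producing multipliers of bounded Coifman--Meyer-type norm on each Littlewood--Paley block. Handling the contribution from $\cC_1$ in $\cN_\sigma$ is easier since it is already cubic in $U$. The actual Fourier-side bounds should come directly from the multilinear multiplier estimates in Appendix~A, whose precise statements I would invoke rather than reprove here.
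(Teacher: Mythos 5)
Your proposal follows the same route as the paper: both express $E_\cS$ as trilinear pairings of $U_\pm$ via bilinear pseudoproduct operators, integrate by parts in time using the non-resonance bound \eqref{phi:lowerbound} on $\Phi_{\mu\nu}$, and split into boundary terms ($\ls\|U(t)\|^3_{H^N}+\|U(1)\|^2_{H^N}$) plus interior quartic terms bounded by the multiplier estimates of Appendix~A; the paper realizes the profile/phase manipulation by directly introducing the time-dependent trilinear functional $I_\cS^{\mu\nu}$ with symbol $m_\cS=-iC\Phi^{-1}_{\mu\nu}[1-\psi_{\le-10}(\cdot)-\psi_{\le-10}(\cdot)]$ in \eqref{sem:energy1} and computing $\tfrac{d}{dt}I_\cS^{\mu\nu}$, which is the same integration by parts you describe in profile language. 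One small imprecision worth noting: the bilinear symbols for the $H$-contributions are not merely products of factors from \eqref{symbol:a} but carry the essential ``high-high'' cutoff factor $1-\psi_{\le-10}(\cdot)-\psi_{\le-10}(\cdot)$ recording the paraproduct remainder structure of $H(\cdot,\cdot)$; this cutoff is what keeps the output frequency below the inputs and lets $\Lambda^{N+1}$ land on the $H^N$ factor, and the paper handles the remaining non-remainder piece $S(u,\p u)$ separately as $E_{\cS_1}^{\mu\nu}$ with the plain symbol $m_{\cS_1}=-i\Phi_{\mu\nu}^{-1}$.
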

\begin{proof}
By \eqref{cU:eqn1} and \eqref{nonlin:energey}, it is easy to find that $E_\cS$ is a linear combination of such terms
\begin{equation*}
E_\cS^{\mu\nu}=\Re\w{P_{\ge1}\Lambda^{N+1}\cT_1H(\cT_2U_\mu,\cT_3U_\nu),
P_{\ge1}\Lambda^NU}
\end{equation*}
and
\begin{equation*}
E_{\cS_1}^{\mu\nu}=\Re\w{P_{\ge1}\Lambda^N\cT_1(\cT_2U_\mu\cT_3U_\nu),
P_{\ge1}\Lambda^NU},
\end{equation*}
where $\cT_1,\cT_2,\cT_3$ are the standard Calderon-Zygmund operators and $\mu,\nu=\pm$.

At first, we deal with $E_\cS^{\mu\nu}$. Set
\begin{equation}\label{sem:energy1}
\begin{split}
I_\cS^{\mu\nu}[f,g,h]&:=\Re\w{B_{m_\cS}(f,g),h}=\Re\iint_{(\R^d)^2}m_\cS(\xi_1,\xi_2)
\hat f(\xi_1)\hat g(\xi_2)\overline{\hat h(\xi_1+\xi_2)}d\xi_1d\xi_2,\\
m_\cS(\xi_1,\xi_2)&:=-iC\Phi^{-1}_{\mu\nu}(\xi_1,\xi_2)
\Big[1-\psi_{\le-10}\Big(\frac{|\xi_1|}{|\xi_1+2\xi_2|}\Big)
-\psi_{\le-10}\Big(\frac{|\xi_2|}{|2\xi_1+\xi_2|}\Big)\Big],\\
I_\cS^{\mu\nu}&:=I_\cS^{\mu\nu}[\cT_2U_\mu,\cT_3U_\nu,
P^2_{\ge1}\Lambda^{2N+1}\cT_1^*U].
\end{split}
\end{equation}
It follows from direct computation and \eqref{phase:def}, \eqref{U:eqn1} that
\begin{equation*}
\begin{split}
\frac{dI_\cS^{\mu\nu}}{dt}
&=I_\cS^{\mu\nu}[\cT_2\p_tU_\mu,\cT_3U_\nu,P^2_{\ge1}\Lambda^{2N+1}\cT_1^*U]
+I_\cS^{\mu\nu}[\cT_2U_\mu,\cT_3\p_tU_\nu,P^2_{\ge1}\Lambda^{2N+1}\cT_1^*U]\\
&\quad+I_\cS^{\mu\nu}[\cT_2U_\mu,\cT_3U_\nu,P^2_{\ge1}\Lambda^{2N+1}\cT_1^*\p_tU]\\
&=E_\cS^{\mu\nu}+I_\cS^{\mu\nu}[\cT_2\cN_\mu,\cT_3U_\nu,P^2_{\ge1}\Lambda^{2N+1}\cT_1^*U]
+I_\cS^{\mu\nu}[\cT_2U_\mu,\cT_3\cN_\nu,P^2_{\ge1}\Lambda^{2N+1}\cT_1^*U]\\
&\quad+I_\cS^{\mu\nu}[\cT_2U_\mu,\cT_3U_\nu,P^2_{\ge1}\Lambda^{2N+1}\cT_1^*\cN],
\end{split}
\end{equation*}
which yields
\begin{equation}\label{sem:energy2}
\begin{split}
\int_1^tE_\cS^{\mu\nu}(s)ds&=I_\cS^{\mu\nu}(t)-I_\cS^{\mu\nu}(1)
-\int_1^tI_\cS^{\mu\nu}[\cT_2\cN_\mu,\cT_3U_\nu,P^2_{\ge1}\Lambda^{2N+1}\cT_1^*U]ds\\
&\quad-\int_1^tI_\cS^{\mu\nu}[\cT_2U_\mu,\cT_3\cN_\nu,P^2_{\ge1}\Lambda^{2N+1}\cT_1^*U]ds\\
&\quad-\int_1^tI_\cS^{\mu\nu}[\cT_2U_\mu,\cT_3U_\nu,P^2_{\ge1}\Lambda^{2N+1}\cT_1^*\cN]ds.
\end{split}
\end{equation}
Due to the support property of $\psi_{\le-10}$ in \eqref{sem:energy1}, one can see that
\begin{equation*}
I_\cS^{\mu\nu}[\cT_2f,\cT_3g,P^2_{\ge1}\Lambda^{2N+1}\cT_1^*h]
=\sum_{\substack{k\ge-1,\\k_1,k_2>k-20}}
I_\cS^{\mu\nu}[\cT_2P_{k_1}f,\cT_3P_{k_2}g,P^2_{\ge1}\Lambda^{2N+1}\cT_1^*P_kh].
\end{equation*}
By \eqref{bilinear:a}, we obtain that
\begin{equation*}
|I_\cS^{\mu\nu}[\cT_2P_{k_1}f,\cT_3P_{k_2}g,P^2_{\ge1}\Lambda^{2N+1}\cT_1^*P_kh]|
\ls 2^{k_1(2d+3)+k(2N+1)}\|P_{k_1}f\|_{L_x^\infty}\|P_{k_2}g\|_{L_x^2}\|P_kh\|_{L_x^2},
\end{equation*}
which implies for $l=0,1$
\begin{equation}\label{sem:energy3}
|I_\cS^{\mu\nu}[\cT_2f,\cT_3g,P^2_{\ge1}\Lambda^{2N+1}\cT_1^*h]|\ls
\sum_{k_1\ge-1}2^{k_1(2d+4+1/9+l)}\|P_{k_1}f\|_{L_x^\infty}\|g\|_{H_x^N}
\|h\|_{H_x^{N-l}}.
\end{equation}
Analogously, we arrive at
\begin{equation*}
|I_\cS^{\mu\nu}[\cT_2f,\cT_3g,P^2_{\ge1}\Lambda^{2N+1}\cT_1^*h]|\ls
\sum_{k_2\ge-1}2^{k_2(2d+4+1/9+l)}\|P_{k_2}g\|_{L_x^\infty}\|f\|_{H_x^N}
\|h\|_{H_x^{N-l}}.
\end{equation*}
Choosing $l=0$ in \eqref{sem:energy3} yields
\begin{equation}\label{sem:energy4}
|I_\cS^{\mu\nu}(t)|\ls\sum_{k\ge-1}2^{2d+4+1/9}\|P_kU(t)\|_{L^\infty(\R^d)}
\|U(t)\|^2_{H^N(\R^d)}\ls\|U(t)\|^3_{H^N(\R^d)},
\end{equation}
where  $N>2d+4+1/9+d/2$ is used.

Denote
\begin{equation}\label{prod:frequency}
\begin{split}
\cX_k&=\cX_k^1\cup\cX_k^2,\\
\cX_k^1&=\{(k_1,k_2)\in\Z^2: |\max\{k_1,k_2\}-k|\le8,k_1,k_2\ge-1\},\\
\cX_k^2&=\{(k_1,k_2)\in\Z^2: \max\{k_1,k_2\}\ge k+8,|k_1-k_2|\le8,k_1,k_2\ge-1\}.
\end{split}
\end{equation}
As in \cite[page 784]{IP13}, if $P_k(P_{k_1}fP_{k_2}g)\neq0$, then $(k_1,k_2)\in\cX_k$.
Applying \eqref{bilinear:b} to $\cN$ in \eqref{U:eqn} derives
\begin{equation}\label{sem:energy5}
\begin{split}
&\quad\;|I_\cS^{\mu\nu}[\cT_2\cN_\mu,\cT_3U_\nu,P^2_{\ge1}\Lambda^{2N+1}\cT_1^*U]|\\
&\ls\sum_{k\ge-1}2^{k(2d+4+1/9)}\|P_k\cN\|_{L^\infty(\R^d)}\|U\|^2_{H^N(\R^d)}\\
&\ls\sum_{k\ge-1}\sum_{(k_1,k_2)\in\cX_k}2^{\max\{k_1,k_2\}(2d+5+1/9)}
\|P_{k_1}U\|_{L^\infty(\R^d)}\|P_{k_2}U\|_{L^\infty(\R^d)}\|U\|^2_{H^N(\R^d)}\\
&\ls\sum_{k\ge-1}2^{k(2d+5+1/8)}\|P_kU\|_{L^\infty(\R^d)}\|U\|_{W^{1,\infty}(\R^d)}
\|U\|^2_{H^N(\R^d)},
\end{split}
\end{equation}
where we have ignored the cubic term $\cC_1$ since it can be treated more easily.

Analogously,
\begin{equation}\label{sem:energy6}
\begin{split}
|I_\cS^{\mu\nu}[\cT_2U_\mu,\cT_3\cN_\nu,P^2_{\ge1}\Lambda^{2N+1}\cT_1^*U]|\ls
\sum_{k\ge-1}2^{k(2d+5+1/8)}\|P_kU\|_{L_x^\infty}\|U\|_{W_x^{1,\infty}}
\|U\|^2_{H_x^N}.
\end{split}
\end{equation}
Before taking the estimate on the last line in \eqref{sem:energy2}, we firstly treat $\|\cN\|_{H^{N-1}}$.
By using \eqref{bilinear:b} again, one has
\begin{equation*}
\|\cN\|_{H_x^{N-1}}\ls\Big\|\sum_{(k_1,k_2)\in\cX_k}2^{k(N-1)+k_2}
\|P_{k_1}U\|_{L_x^\infty}\|P_{k_2}U\|_{L_x^2}\Big\|_{\ell^2_k}
\ls\|U\|_{W_x^{1,\infty}}\|U\|_{H_x^N},
\end{equation*}
where $\ds\|A_k\|_{\ell^p_k}=(\sum_{k\ge-1}A_k^p)^{1/p},p\ge1$.
In addition, choosing $l=1$ in \eqref{sem:energy3} yields
\begin{equation}\label{sem:energy7}
\begin{split}
|I_\cS^{\mu\nu}[\cT_2U_\mu,\cT_3U_\nu,P^2_{\ge1}\Lambda^{2N+1}\cT_1^*\cN]|
&\ls\sum_{k\ge-1}2^{k(2d+5+1/8)}\|P_kU\|_{L_x^\infty}\|U\|_{H_x^N}\|\cN\|_{H_x^{N-1}}\\
&\ls\sum_{k\ge-1}2^{k(2d+5+1/8)}\|P_kU\|_{L_x^\infty}\|U\|_{W_x^{1,\infty}}
\|U\|^2_{H_x^N}.
\end{split}
\end{equation}
Next, we turn to the estimate of $E_{\cS_1}^{\mu\nu}$.
Similarly to $E_\cS^{\mu\nu}$, let
\begin{equation}\label{sem:energy8}
\begin{split}
I_{\cS_1}^{\mu\nu}[f,g,h]&:=\Re\iint_{(\R^d)^2}m_{\cS_1}(\xi-\eta,\eta)
\hat f(\xi-\eta)\hat g(\eta)\overline{\hat h(\xi)}d\xi d\eta,\\
m_{\cS_1}(\xi_1,\xi_2)&:=-i\Phi^{-1}_{\mu\nu}(\xi_1,\xi_2),\\
I_{\cS_1}^{\mu\nu}&:=I_{\cS_1}^{\mu\nu}[\cT_2U_\mu,\cT_3U_\nu,
P^2_{\ge1}\Lambda^{2N}\cT_1^*U].
\end{split}
\end{equation}
Then we arrive at
\begin{equation*}
\begin{split}
\int_1^tE_{\cS_1}^{\mu\nu}(s)ds&=I_{\cS_1}^{\mu\nu}(t)-I_{\cS_1}^{\mu\nu}(1)
-\int_1^tI_{\cS_1}^{\mu\nu}[\cT_2\cN_\mu,\cT_3U_\nu,P^2_{\ge1}\Lambda^{2N}\cT_1^*U]ds\\
&\quad-\int_1^tI_{\cS_1}^{\mu\nu}[\cT_2U_\mu,\cT_3\cN_\nu,P^2_{\ge1}\Lambda^{2N}\cT_1^*U]ds\\
&\quad-\int_1^tI_{\cS_1}^{\mu\nu}[\cT_2U_\mu,\cT_3U_\nu,P^2_{\ge1}\Lambda^{2N}\cT_1^*\cN]ds,
\end{split}
\end{equation*}
where
\begin{equation*}
I_{\cS_1}^{\mu\nu}[\cT_2f,\cT_3g,P^2_{\ge1}\Lambda^{2N}\cT_1^*h]
=\sum_{k\ge-1}\sum_{(k_1,k_2)\in\cX_k}
I_{\cS_1}^{\mu\nu}[\cT_2P_{k_1}f,\cT_3P_{k_2}g,P^2_{\ge1}\Lambda^{2N}\cT_1^*P_kh].
\end{equation*}
Note that the estimate of $I_{\cS_1}^{\mu\nu}$ is much easier  to be treated than $I_\cS^{\mu\nu}$, we omit it here.
Substituting \eqref{sem:energy4}--\eqref{sem:energy7} into \eqref{sem:energy2} derives \eqref{sem:energy}.
\end{proof}

\subsection{Energy estimate II}
\begin{lemma}\label{lem:qua:energy}
Under the assumptions of Proposition \ref{prop:high:energy}, we then have
\begin{equation}\label{qua:energy}
\begin{split}
\Big|\int_1^tE_\cQ(s)ds\Big|&\ls\int_1^t\Big(\sum_{k\ge-1}
2^{k(2d+5+1/8)}\|P_kU(s)\|_{L^\infty(\R^d)}\Big)^2\|U(s)\|^2_{H^N(\R^d)}ds\\
&\quad+\|U(1)\|^2_{H^N(\R^d)}+\|U(t)\|^3_{H^N(\R^d)}.
\end{split}
\end{equation}
\end{lemma}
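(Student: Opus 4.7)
The plan is to mirror the normal-form strategy used for $E_\cS$ in Lemma \ref{lem:sem:energy}, exploiting the lower bound $|\Phi_{\mu\nu}^{-1}(\xi-\eta,\eta)| \lesssim 1+\min\{|\xi|,|\eta|,|\xi-\eta|\}$ from Lemma \ref{lem:phase}. Inspecting \eqref{cU:eqn1}, every term contributing to $\cQ$ (and likewise the commutator $[P_{\ge1}\Lambda^N, iT_{Q^{0j}\zeta_j+(\sqrt{1+q}-1)\Lambda(\zeta)}]\cU$ in \eqref{nonlin:energey}) is, modulo harmless cubic remainders absorbed into $\cC$, a sum of paradifferential expressions of the form $T_a u$ or error pieces $E(a_1,a_2)u$, $E(a_1,a_2,a_3)u$, where each $a_j$ is linear in $U_\mu$. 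Each such term can thus be rewritten as $B_{m}(U_\mu,U_\nu)$ (modulo $\cC_1$), where $m$ carries either the paradifferential cutoff $\psi_{\le-10}(|\xi_1|/|\xi_1+2\xi_2|)$ or a cutoff gaining a frequency localization through the error-operator estimate of Lemma \ref{lem:error}.

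First I would split each contribution to $E_\cQ$ into expressions $E_\cQ^{\mu\nu}$ analogous to $E_\cS^{\mu\nu}$ and introduce the normal form
\begin{equation*}
I_\cQ^{\mu\nu}[f,g,h] := \Re\iint_{(\R^d)^2} \Phi_{\mu\nu}^{-1}(\xi_1,\xi_2)\, \widetilde{m}(\xi_1,\xi_2)\,\hat f(\xi_1)\hat g(\xi_2)\overline{\hat h(\xi_1+\xi_2)}\,d\xi_1 d\xi_2,
\end{equation*}
where $\widetilde{m}$ is the original symbol of the paradifferential/commutator/error term (with the appropriate cutoff) and by Lemma \ref{lem:phase} the new symbol $\Phi_{\mu\nu}^{-1}\widetilde{m}$ inherits the required multiplier bounds (using \eqref{phi:deri:bound} so derivatives are controlled). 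Differentiating $I_\cQ^{\mu\nu}[\cT_2 U_\mu, \cT_3 U_\nu, P_{\ge1}^2\Lambda^{2N+1}\cT_1^* U]$ in $t$ and using \eqref{U:eqn1} reproduces $E_\cQ^{\mu\nu}$ plus three terms in which one copy of $U$ is replaced by $\cN$. Then $\int_1^t E_\cQ^{\mu\nu}\,ds$ equals a boundary contribution $I_\cQ^{\mu\nu}|_1^t$ plus these three substituted integrals.

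For the boundary terms, the paradifferential cutoff (or the error-operator frequency gain) forces $\min\{|\xi|,|\eta|,|\xi-\eta|\}$ to be comparable to the smallest frequency, which together with \eqref{phi:lowerbound} yields a symbol bounded by a constant times the frequency of the low-frequency slot. The estimate \eqref{bilinear:a}, together with a frequency decomposition as in \eqref{prod:frequency}, then gives the cubic bound $|I_\cQ^{\mu\nu}(t)|\lesssim \|U(t)\|_{H^N}^3$ and likewise at $t=1$. For the three substituted integrals, when $U$ is replaced by $\cN$ in the low-frequency slot (which is the only case that matters, since in the high-frequency slot we can just use \eqref{bilinear:b} and get $\|\cN\|_{H^{N-1}}\lesssim \|U\|_{W^{1,\infty}}\|U\|_{H^N}$ as in \eqref{sem:energy7}), I use \eqref{bilinear:b} to bound $\|P_k\cN\|_{L^\infty}\lesssim \sum_{(k_1,k_2)\in\cX_k} 2^{\max\{k_1,k_2\}}\|P_{k_1}U\|_{L^\infty}\|P_{k_2}U\|_{L^\infty}$, so that the resulting quartic in $U$ carries a factor $\sum_{k\ge-1}2^{k(2d+5+1/8)}\|P_kU\|_{L^\infty}$ from the $\cN$-slot and a second such factor from the remaining low-frequency (paradifferential) slot, times $\|U\|_{H^N}^2$. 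Integrating in $s$ gives the first term on the right-hand side of \eqref{qua:energy}, and the boundary pieces give the last two terms.

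The main obstacle is bookkeeping. One must verify that for every term in \eqref{cU:eqn1} the symbol $\widetilde{m}$ multiplied by $\Phi_{\mu\nu}^{-1}$ still lies in the multiplier class where \eqref{bilinear:a} and \eqref{bilinear:b} apply (with exponent $2d+5+1/8$). For the pure paradifferential pieces this is automatic from the cutoff $\psi_{\le-10}$; for the error pieces $E(\cdots)$ one uses that Lemma \ref{lem:error} provides the gain of one derivative on the lowest-frequency factor, which exactly compensates the growth of $\Phi_{\mu\nu}^{-1}$ at zero frequency per \eqref{phi:lowerbound}. The commutator $[P_{\ge1}\Lambda^N, iT_{\cdot}]\cU$ is handled by the standard paradifferential commutator expansion, producing symbols of the same form. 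Finally, the genuinely cubic contributions from the $\cC_1$ piece of \eqref{U:eqn} and from differences $\cU-U$ are estimated directly using Lemma \ref{lem:goodunknown} and absorbed into the $\|U\|_{H^N}^3$ terms, completing the proof.
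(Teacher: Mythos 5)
Your proposal has a genuine gap in the treatment of the substituted integrals, and it obscures the key cancellation that makes the paper's argument close.

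You set up the normal form $I_\cQ^{\mu\nu}[\cT_2 U_\mu, \cT_3 U_\nu, P_{\ge1}^2\Lambda^{2N+1}\cT_1^* U]$ with $U$ in all three slots and then propose to substitute the dynamics via \eqref{U:eqn1}, so that one copy of $U$ is replaced by $\cN$. This mirrors the treatment of $E_\cS$. But $E_\cS$ arises from the \emph{remainder} pieces $H(\cdot,\cdot)$ in \eqref{cU:eqn1}, whose symbol $m_\cS$ is supported in the high--high region $|\xi_1|\approx|\xi_2|\approx|\xi_1+\xi_2|$; there, \eqref{sem:energy3} lets you trade a derivative between the output slot and a low-frequency slot (that is what the flexibility parameter $l$ encodes), so $\|\cN\|_{H^{N-1}}\lesssim\|U\|_{W^{1,\infty}}\|U\|_{H^N}$ suffices. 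The symbol $m_\cQ$ in $E_\cQ$, however, carries the cutoff $\psi_{\le-10}(|\xi_1|/|\xi_1+2\xi_2|)$, which forces the strictly low--high regime $|\xi_1|\ll|\xi_2|\approx|\xi_1+\xi_2|$. In that regime there is no derivative to borrow from the low slot: per \eqref{bilinear:c}, the high-frequency slots already carry $2^{2Nk_2}$, and $\Phi_{\mu\nu}^{-1}$ only gains on the small variable $\xi_1$. Placing $\cN\sim B_{\eta_j}(U,U)\in H^{N-1}$ into the high-frequency $\Lambda^{2N}$ slot then loses exactly one derivative that cannot be recovered, and the asserted quartic bound does not follow. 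Your parenthetical ``we can just use \eqref{bilinear:b} and get $\|\cN\|_{H^{N-1}}\ldots$ as in \eqref{sem:energy7}'' is precisely the step that fails here.

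The paper avoids this by keeping the good unknowns $\cU_\nu$, $\cU$ (not $U_\nu$, $U$) in the second and third slots of $I_\cQ^{\mu\nu}$ and substituting the $\cU$-equation \eqref{cU:eqn} rather than \eqref{U:eqn1}. After substitution, the benign sources $\cS+\cQ+\cC$ land in $H^N$ with no loss, while the genuinely dangerous pieces are $I_\cQ^{\mu\nu}[U_\mu, i\nu T_{q_1}\cU_\nu, \cU]$ and $I_\cQ^{\mu\nu}[U_\mu, \cU_\nu, iT_{q_1}\cU]$, each involving one extra derivative through the order-one symbol $q_1 = Q^{0j}\zeta_j+(\sqrt{1+q}-1)\Lambda(\zeta)$. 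For $\nu=-$ these are handled by \eqref{bilinear:d}, but for $\nu=+$ the key idea is that $q_1$ is real, so $T_{q_1}$ is self-adjoint (Lemma \ref{lem:para}(i)); the two offending terms then combine into a \emph{commutator} $\Re\langle B_{\Phi^{-1}_{\mu+}m_\cQ}(U_\mu,T_{q_1}\cU)-T_{q_1}B_{\Phi^{-1}_{\mu+}m_\cQ}(U_\mu,\cU),\,\cU\rangle$, and the commutator structure \eqref{qua:energy8}--\eqref{qua:energy11} recovers the missing derivative via the difference of symbol values and a Taylor expansion, precisely because the paradifferential cutoff makes $q_1$ low frequency. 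Your proposal contains no analogue of this cancellation. Without it, the energy estimate for the quasilinear (order-one) paradifferential terms cannot close, and the lemma is not established.
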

\begin{proof}
Note that
\begin{equation*}
\begin{split}
[P_{\ge1}\Lambda^N,iT_{Q^{0j}\zeta_j+(\sqrt{1+q}-1)\Lambda(\zeta)}]
&=iE(\psi_{\ge1}(\zeta)\Lambda^N(\zeta),Q^{0j}\zeta_j+(\sqrt{1+q}-1)\Lambda(\zeta))\\
&\quad-iE(Q^{0j}\zeta_j+(\sqrt{1+q}-1)\Lambda(\zeta),\psi_{\ge1}(\zeta)\Lambda^N(\zeta)).
\end{split}
\end{equation*}
Analogously to $E_\cC$ in \eqref{nonlin:energey}, $E_\cQ$ is a linear combination of
\begin{equation}\label{qua:energy1}
\begin{split}
E_\cQ^{\mu\nu}&:=\Re\w{B_{m_\cQ}(U_\mu,\cU_\nu),\cU}
=\Re\iint_{(\R^d)^2}m_\cQ(\xi_1,\xi_2)
\hat U_\mu(\xi_1)\hat\cU_\nu(\xi_2)\overline{\hat\cU(\xi_1+\xi_2)}d\xi_1d\xi_2\\
\end{split}
\end{equation}
with
\begin{equation*}
\begin{split}
m_\cQ(\xi_1,\xi_2)&:=C\psi_{\le-10}\Big(\frac{|\xi_1|}{|\xi_1+2\xi_2|}\Big)
n_1(\xi_1)n_2(\xi_2)n_3(\xi_1+\xi_2)\\
&\qquad\times\Big[n_4(\xi_1+\xi_2)n_5(\xi_2)
-n_4(\frac{\xi_1+2\xi_2}{2})n_5(\frac{\xi_1+2\xi_2}{2})\Big],
\end{split}
\end{equation*}
where $n_l\in S^{m_l}_{1,0}$ (H\"ormander class), $l=1,\cdots,5$, $\sum_{l=1}^5m_l=2N+1$, and $n_2,n_3=0$ on $\supp\psi_{-1}$.
Denote
\begin{equation*}
I_\cQ^{\mu\nu}[f,g,h]:=\Re\w{B_{-i\Phi^{-1}_{\mu\nu}m_\cQ}(f,g),h},
\qquad I_\cQ^{\mu\nu}:=I_\cQ^{\mu\nu}[U_\mu,\cU_\nu,\cU].
\end{equation*}
As in Lemma \ref{lem:sem:energy}, we can obtain
\begin{equation}\label{qua:energy2}
\begin{split}
\int_1^tE_\cQ(s)ds&=I_\cQ^{\mu\nu}(t)-I_\cQ^{\mu\nu}(1)
-\int_1^tI_\cQ^{\mu\nu}[\cN_\mu,\cU_\nu,\cU]ds\\
&\quad-\int_1^tI_\cQ^{\mu\nu}[U_\mu,(\p_t-i\nu\Lambda)\cU_\nu,\cU]ds
-\int_1^tI_\cQ^{\mu\nu}[U_\mu,U_\nu,(\p_t-i\Lambda)\cU]ds.
\end{split}
\end{equation}
For the term $I_\cQ^{\mu\nu}(t)$ in \eqref{qua:energy2}, it can be deduced from \eqref{prod:frequency} and \eqref{bilinear:c} that
\begin{equation*}
\begin{split}
|I_\cQ^{\mu\nu}(t)|&\ls\sum_{\substack{k\ge-1,\\(k_1,k_2)\in\cX_k}}
|I_\cQ^{\mu\nu}[P_{k_1}U_\mu,P_{k_2}\cU_\nu,P_k\cU]|\\
&\ls\sum_{\substack{k\ge-1,(k_1,k_2)\in\cX_k^1,\\|k-k_2|\le O(1)}}2^{k_1(2d+4)+2Nk}
\|P_{k_1}U\|_{L^\infty(\R^d)}\|P_{\ge0}P_{k_2}\cU\|_{L^2(\R^d)}\|P_{\ge0}P_k\cU\|_{L^2(\R^d)}\\
&\quad+\sum_{k\ge-1}2^{k(2d+4+1/9)}\|P_kU\|_{L^\infty(\R^d)}\|P_{\ge0}\cU\|^2_{H^N(\R^d)},
\end{split}
\end{equation*}
where the last line above for the case $(k_1,k_2)\in\cX_k^2$ can be treated as in \eqref{sem:energy4}.
Together with Lemma \ref{lem:goodunknown}, one achieves
\begin{equation}\label{qua:energy3}
|I_\cQ^{\mu\nu}(t)|\ls\|U(t)\|^3_{H^N(\R^d)}.
\end{equation}
Analogously,
\begin{equation}\label{qua:energy4}
\begin{split}
|I_\cQ^{\mu\nu}[\cN_\mu,\cU_\nu,\cU]|
&\ls\sum_{k\ge-1}2^{k(2d+4+1/9)}\|P_k\cN\|_{L^\infty(\R^d)}\|P_{\ge0}\cU\|^2_{H^N(\R^d)}\\
&\ls\sum_{k\ge-1}2^{k(2d+5+1/8)}\|P_kU\|_{L^\infty(\R^d)}\|U\|_{W^{3,\infty}(\R^d)}
\|U\|^2_{H^N(\R^d)}.
\end{split}
\end{equation}
For the second line in \eqref{qua:energy2}, it follows from \eqref{cU:eqn} that
\begin{equation}\label{qua:energy5}
\begin{split}
&\quad\; I_\cQ^{\mu\nu}[U_\mu,(\p_t-i\nu\Lambda)\cU_\nu,\cU]
+I_\cQ^{\mu\nu}[U_\mu,U_\nu,(\p_t-i\Lambda)\cU]\\
&=I_\cQ^{\mu\nu}[U_\mu,i\nu T_{Q^{0j}\zeta_j+(\sqrt{1+q}-1)\Lambda(\zeta)}\cU_\nu,
\cU]+I_\cQ^{\mu\nu}[U_\mu,\cU_\nu,iT_{Q^{0j}\zeta_j+(\sqrt{1+q}-1)\Lambda(\zeta)}\cU]\\
&\quad+I_\cQ^{\mu\nu}[U_\mu,(\cS+\cQ+\cC)_\nu,\cU]
+I_\cQ^{\mu\nu}[U_\mu,\cU_\nu,\cS+\cQ+\cC],
\end{split}
\end{equation}
where $(\cS+\cQ+\cC)_+=\cS+\cQ+\cC$ and $(\cS+\cQ+\cC)_-=\overline{\cS+\cQ+\cC}$.
By using \eqref{bilinear:c} again, we have
\begin{equation}\label{qua:energy6}
\begin{split}
&|I_\cQ^{\mu\nu}[U_\mu,(\cS+\cQ+\cC)_\nu,\cU]|
+|I_\cQ^{\mu\nu}[U_\mu,\cU_\nu,\cS+\cQ+\cC]|\\
\ls&\sum_{k\ge-1}2^{k(2d+5+1/8)}\|P_kU\|_{L^\infty(\R^d)}\|U\|_{W^{3,\infty}(\R^d)}
\|U\|^2_{H^N(\R^d)}.
\end{split}
\end{equation}
At last, we turn to the estimate of the second line in \eqref{qua:energy5}. Denote $q_1(x,\zeta):=Q^{0j}\zeta_j+(\sqrt{1+q}-1)\Lambda(\zeta)$.
For $\nu=-$, it can be derived from \eqref{bilinear:d} that
\begin{equation}\label{qua:energy7}
|I_\cQ^{\mu-}[U_\mu,-iT_{q_1}\cU_-,\cU]+I_\cQ^{\mu-}[U_\mu,\cU_-,iT_{q_1}\cU]|
\ls\|U\|^2_{W^{3,\infty}(\R^d)}\|U\|^2_{H^N(\R^d)}.
\end{equation}
Next we deal with the case of $\nu=+$. Since $q_1(x,\zeta)$ is real, Lemma \ref{lem:para} (i) ensures that $T_{q_1}$ is self adjoint.
According to the definitions \eqref{para:def} and \eqref{bilinear:def}, we arrive at
\begin{equation}\label{qua:energy8}
\begin{split}
&\quad\; I_\cQ^{\mu+}[U_\mu,iT_{q_1}\cU,\cU]+I_\cQ^{\mu+}[U_\mu,\cU,iT_{q_1}\cU]\\
&=\Re\w{B_{\Phi^{-1}_{\mu+}m_\cQ}(U_\mu,T_{q_1}\cU),\cU}
-\Re\w{B_{\Phi^{-1}_{\mu+}m_\cQ}(U_\mu,\cU),T_{q_1}\cU}\\
&=\Re\w{B_{\Phi^{-1}_{\mu+}m_\cQ}(U_\mu,T_{q_1}\cU)
-T_{q_1}B_{\Phi^{-1}_{\mu+}m_\cQ}(U_\mu,\cU),\cU}\\
&=C\Re\iiint_{(\R^d)^3}\hat U_\mu(\xi_1)\hat\cU(\eta)\overline{\hat\cU(\xi_1+\xi_2)}
\Big[(\Phi^{-1}_{\mu+}m_\cQ)(\xi_1,\xi_2)\hat{q_1}(\xi_2-\eta,\frac{\xi_2+\eta}{2})
\psi_{\le-10}\Big(\frac{|\xi_2-\eta|}{|\xi_2+\eta|}\Big)\\
&\quad-(\Phi^{-1}_{\mu+}m_\cQ)(\xi_1,\eta)
\hat{q_1}(\xi_2-\eta,\frac{2\xi_1+\xi_2+\eta}{2})
\psi_{\le-10}\Big(\frac{|\xi_2-\eta|}{|2\xi_1+\xi_2+\eta|}\Big)\Big]d\xi_1 d\xi_2d\eta.
\end{split}
\end{equation}
Set
\begin{equation}\label{qua:energy9}
\begin{split}
&\quad\;|I_\cQ^{\mu+}[U_\mu,iT_{q_1}\cU,\cU]+I_\cQ^{\mu+}[U_\mu,\cU,iT_{q_1}\cU]|\\
&=\sum_{k,k_1,k_2,k_3\ge-1}|I_\cQ^{\mu+}[P_{k_1}U_\mu,iT_{P_{k_2}q_1}P_{k_3}\cU,P_k\cU]
+I_\cQ^{\mu+}[P_{k_1}U_\mu,P_{k_3}\cU,iT_{P_{k_2}q_1}P_k\cU]|\\
&:=\sum_{k,k_1,k_2,k_3\ge-1}|\w{\sT_{m_{\cQ_1}}(P_{k_1}U_\mu,P_{k_2}q_1,P_{k_3}\cU),P_k\cU}|,
\end{split}
\end{equation}
where $\sT_{m_{\cQ_1}}(f,g,h)$ is the trilinear pseudoproduct operator similarly defined in \eqref{bilinear:def}
\begin{equation}\label{trilinear:def}
\sT_m(f,g,h):=\sF_\xi^{-1}\iint_{(\R^d)^2}m(\xi-\eta,\eta-\zeta,\zeta)
\hat f(\xi-\eta)\hat g(\eta-\zeta)\hat h(\zeta)d\eta d\zeta.
\end{equation}
Denote the Schwarz kernel of $m_{\cQ_1}$ by $\cK(x,y,z)$.  Similarly to \eqref{bilinear1}, we can get
\begin{equation}\label{qua:energy10}
\|\cK(x,y,z)\|_{L^1((\R^d)^3)}\ls\sum_{l=0}^{[3d/2]+1}\sum_{n=1}^3
2^{lk_n}\|\psi_{k_n}(\xi_n)\p_{\xi_n}^lm_{\cQ_1}(\xi_1,\xi_2,\xi_3)\|_{L^\infty}.
\end{equation}
From \eqref{qua:energy8}, one can see that $|\xi_1|\approx2^{k_1}$, $|\xi_2-\eta|\approx2^{k_2}$, $|\eta|\approx2^{k_3}$, $|\xi_1+\xi_2|\approx2^k$, $k_1,k_2\le k_3-6$ and $|k_3-k|\le O(1)$.
In view of \eqref{phi:deri:bound}, it is required to control $\p_{\xi_3}^lm_{\cQ_1}$ in \eqref{qua:energy10} which is defined in
\eqref{qua:energy8} and \eqref{qua:energy9}.
Note that for $\xi=\xi_2$ or $\xi=\eta$
\begin{equation*}
\p_{\xi_2}\Phi^{-1}_{\mu+}(\xi_1,\xi)
=\Phi^{-2}_{\mu+}(\xi_1,\xi)(\nabla\Lambda(\xi_1+\xi)-\nabla\Lambda(\xi))
=\Phi^{-2}_{\mu+}(\xi_1,\xi)\int_0^1\xi_1\nabla^2\Lambda(\theta\xi_1+\xi)d\theta.
\end{equation*}
This, together with \eqref{bilinear2}-\eqref{bilinear3} yields
\begin{equation*}
\|\cK(x,y,z)\|_{L^1((\R^d)^3)}\ls2^{2Nk_3+([3d/2]+3)(k_1+k_2)}.
\end{equation*}
Therefore,
\begin{equation}\label{qua:energy11}
|I_\cQ^{\mu+}[U_\mu,iT_{q_1}\cU,\cU]+I_\cQ^{\mu+}[U_\mu,\cU,iT_{q_1}\cU]|\ls
\Big(\sum_{k\ge-1}2^{k([3d/2]+3)}\|P_kU\|_{L^\infty(\R^d)}\Big)^2\|U\|^2_{H^N(\R^d)}.
\end{equation}
Collecting \eqref{qua:energy2}--\eqref{qua:energy11} derives \eqref{qua:energy}.
\end{proof}

\begin{proof}[Proof of Proposition \ref{prop:high:energy}]
Substituting Lemma \ref{lem:quart:energy}, \ref{lem:sem:energy}, \ref{lem:qua:energy} into \eqref{high:energy}
implies Proposition \ref{prop:high:energy}.
\end{proof}

\section{Lower order energy estimate and proof of Theorem \ref{thm1}}
\subsection{Lower order energy estimate}
Define
\begin{equation}\label{profile:def}
\quad V:=V_+=e^{-it\Lambda}U,\quad V_-:=\overline{V}.
\end{equation}
Then \eqref{U:eqn} is reformulated into
\begin{equation}\label{profile:eqn}
\begin{split}
\hat V(t,\xi)&=\sum_{\mu,\nu=\pm}\int_1^t\int_{\R^d}e^{is\Phi_{\mu\nu}(\xi-\eta,\eta)}
a_{\mu\nu}(\xi-\eta,\eta)\hat V_\mu(s,\xi-\eta)\hat V_\nu(s,\eta)d\eta ds\\
&\quad+\hat V(1,\xi)+\int_1^te^{-is\Lambda(\xi)}\hat\cC_1(t,\xi)ds,
\end{split}
\end{equation}
where $\Phi_{\mu\nu}$ and $a_{\mu\nu}$ are defined by \eqref{phase:def} and \eqref{symbol:a}, respectively.
Thanks to \eqref{phi:lowerbound}, we can integrate the terms of \eqref{profile:eqn} by parts in $s$ to get
\begin{equation}\label{profile:eqn1}
\begin{split}
\hat V(t,\xi)&=\hat V(1,\xi)+\int_1^te^{-is\Lambda(\xi)}\hat\cC_1(t,\xi)ds
-i\sum_{\mu,\nu=\pm}\sF(e^{-is\Lambda}B_{\Phi^{-1}_{\mu\nu}a_{\mu\nu}}
(U_\mu,U_\nu))(s,\xi)\Big|_{s=1}^t\\
&\quad+i\sum_{\mu,\nu=\pm}\int_1^t\int_{\R^d}e^{is\Phi_{\mu\nu}(\xi-\eta,\eta)}
(\Phi^{-1}_{\mu\nu}a_{\mu\nu})(\xi-\eta,\eta)\p_t(\hat V_\mu(s,\xi-\eta)
\hat V_\nu(s,\eta))d\eta ds.
\end{split}
\end{equation}
Returning to the physical space $(t,x)$, one has from \eqref{profile:eqn1} that
\begin{equation}\label{profile:eqn2}
\begin{split}
V(t,x)&=V(1,x)+\int_1^te^{-is\Lambda}\cC_1ds-i\sum_{\mu,\nu=\pm}e^{-is\Lambda}
B_{\Phi^{-1}_{\mu\nu}a_{\mu\nu}}(U_\mu,U_\nu)(s,x)\Big|_{s=1}^t\\
&+i\sum_{\mu,\nu=\pm}\int_1^te^{-is\Lambda}\{
B_{\Phi^{-1}_{\mu\nu}a_{\mu\nu}}(\cN_\mu,U_\nu)
+B_{\Phi^{-1}_{\mu\nu}a_{\mu\nu}}(U_\mu,\cN_\nu)\}ds,
\end{split}
\end{equation}
where \eqref{U:eqn1} is  used.
\begin{lemma}\label{lem:low:energy}
Let $N$ be given in Theorem \ref{thm1} and $\|U\|_{H^N}$ be sufficiently small, then we have
\begin{equation}\label{low:energy}
\begin{split}
\|U(t)\|_{L^2(\R^d)}&\ls\|U(1)\|_{L^2(\R^d)}+\|U(1)\|^2_{H^N(\R^d)}+\|U(t)\|^2_{H^N(\R^d)}\\
&\quad+\int_1^t\|U(s)\|^2_{W^{2d+4,\infty}(\R^d)}\|U(s)\|_{H^N(\R^d)}ds.
\end{split}
\end{equation}
\end{lemma}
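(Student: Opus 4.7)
The plan is to apply the normal form identity \eqref{profile:eqn2} obtained by integrating by parts once in $s$, which exchanges the quadratic nonlinearity for cubic contributions plus two boundary bilinear terms. Since $V=e^{-it\Lambda}U$ and $e^{-it\Lambda}$ is unitary on $L^2(\R^d)$, one has $\|U(t)\|_{L^2}=\|V(t)\|_{L^2}$, so it suffices to estimate the right-hand side of \eqref{profile:eqn2} in $L^2(\R^d)$. The initial piece $V(1,x)$ directly contributes $\|U(1)\|_{L^2}$.

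For the two boundary bilinear terms $e^{-is\Lambda}B_{\Phi^{-1}_{\mu\nu}a_{\mu\nu}}(U_\mu,U_\nu)(s,x)\big|_{s=1}^{t}$, I would perform a Littlewood-Paley decomposition $U=\sum_{k\ge-1}P_kU$ and use the kernel bounds from Appendix A (analogous to \eqref{bilinear:a}) together with Lemma \ref{lem:phase}: $|\Phi^{-1}_{\mu\nu}|\ls1+\min\{|\xi|,|\eta|,|\xi-\eta|\}$ and $|\nabla^l_{\xi_1,\xi_2}\Phi^{-1}_{\mu\nu}|\ls|\Phi^{-1}_{\mu\nu}|$. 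Since the factors $a_{\mu\nu}$ from \eqref{symbol:a} are of order at most one and the multiplier $\Phi^{-1}_{\mu\nu}$ is bounded at low frequencies, one obtains
\begin{equation*}
\|B_{\Phi^{-1}_{\mu\nu}a_{\mu\nu}}(U_\mu,U_\nu)\|_{L^2(\R^d)}\ls\|U\|_{H^N(\R^d)}^2,
\end{equation*}
which accounts for the $\|U(1)\|_{H^N}^2+\|U(t)\|_{H^N}^2$ contributions in \eqref{low:energy}.

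For the integrated cubic piece $\int_1^te^{-is\Lambda}\cC_1\,ds$, Minkowski's inequality in time and a Littlewood-Paley decomposition with $L^\infty\cdot L^\infty\cdot L^2$ distributions produce an integrand bounded by $\|U\|^2_{W^{2d+4,\infty}}\|U\|_{H^N}$. For the main interior term $\int_1^te^{-is\Lambda}B_{\Phi^{-1}_{\mu\nu}a_{\mu\nu}}(\cN_\mu,U_\nu)\,ds$ (and its symmetric counterpart), I would substitute \eqref{U:eqn} so that the integrand becomes a sum of trilinear pseudoproducts $\sT_{m}(U_{\mu'},U_{\nu'},U_\nu)$ as in \eqref{trilinear:def}, with symbol $m$ built from $\Phi^{-1}_{\mu\nu}a_{\mu\nu}$ and $a_{\mu'\nu'}$. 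Decomposing each factor dyadically, placing the highest-frequency factor in $L^2$ (costing $\|U\|_{H^N}$) and the other two in $L^\infty$, and summing via the trilinear kernel $L^1$-bound of the form \eqref{qua:energy10} yields
\begin{equation*}
\Big\|\int_1^te^{-is\Lambda}B_{\Phi^{-1}_{\mu\nu}a_{\mu\nu}}(\cN_\mu,U_\nu)\,ds\Big\|_{L^2(\R^d)}\ls\int_1^t\|U(s)\|^2_{W^{2d+4,\infty}(\R^d)}\|U(s)\|_{H^N(\R^d)}\,ds,
\end{equation*}
together with an even better cubic-in-$\cC_1$ contribution.

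The main technical obstacle will be the bookkeeping of low-frequency losses from $\Phi^{-1}_{\mu\nu}\ls1+\min\{|\xi|,|\eta|,|\xi-\eta|\}$ combined with the order-one symbols in \eqref{symbol:a} and the derivative count $\mathfrak{c}_d\sim[3d/2]+1$ required by Appendix A. Verifying that the resulting geometric sum over Littlewood-Paley triples $(k_1,k_2,k_3)$ converges forces the exponent $2d+4$ in the $W^{2d+4,\infty}$ norm, but unlike Lemma \ref{lem:qua:energy} no weight $\Lambda^{2N}$ is involved here, so once $N$ is large enough relative to $d$ the summation over the two $L^\infty$-placed factors converges without issue.
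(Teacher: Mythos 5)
Your proof is correct and follows essentially the same route as the paper: start from the once-integrated-by-parts identity \eqref{profile:eqn2}, use unitarity of $e^{-it\Lambda}$ on $L^2(\R^d)$, estimate the boundary bilinear terms and the interior time integral via Littlewood--Paley decomposition and kernel bounds from Appendix A. The single genuine difference is in how the cubic interior term is handled: the paper \emph{keeps the nested bilinear structure} $B_{\Phi^{-1}_{\mu\nu}a_{\mu\nu}}(P_{k_1}\cN_\mu,P_{k_2}U_\nu)$, placing the lower-frequency $P_{k_1}\cN$ in $L^\infty$ and the higher-frequency $P_{k_2}U$ in $L^2$, and then applies the bilinear kernel estimates \eqref{bilinear:b} and \eqref{bilinear:e} twice (once for the outer bilinear, once for $\cN$ itself expanded as $\sum B_{a_{\mu\nu}}(U_\mu,U_\nu)$). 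You instead form an explicit trilinear pseudoproduct $\sT_m$ with a combined symbol built from $\Phi^{-1}_{\mu\nu}a_{\mu\nu}$ and $a_{\mu'\nu'}$ and invoke a trilinear kernel $L^1$-bound of the form \eqref{qua:energy10}. Both are valid. The iterated-bilinear route has the small advantage that it reuses Lemma \ref{lem:bilinear} (which is already stated for general $d$) and avoids deriving a fresh trilinear kernel bound for a new symbol in general dimension; your trilinear route mirrors what the paper does later in \eqref{profile:eqn3}--\eqref{symbol:b} and Lemma \ref{lem:trilinear}, so it is more uniform with Section 5, but strictly speaking \eqref{qua:energy10} and Lemma \ref{lem:trilinear} are proved in the paper for different symbols (and Lemma \ref{lem:trilinear} only for $d=2$), so you would need to note that the same stationary-phase argument applies to your symbol for general $d$.
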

\begin{proof}
It follows from \eqref{profile:eqn2} that
\begin{equation}\label{low:energy1}
\begin{split}
\|P_k(V(t)-V(1))\|_{L^2(\R^d)}&\ls\sum_{(k_1,k_2)\in\cX_k}\Big(J_{kk_1k_2}^0(1)
+J_{kk_1k_2}^0(t)+\int_1^t(J_{kk_1k_2}^1(s)+J_{kk_1k_2}^2(s))ds\Big)\\
&\qquad+\int_1^t\|P_ke^{-is\Lambda}\cC_1\|_{L^2(\R^d)}ds,
\end{split}
\end{equation}
where
\begin{equation}\label{low:energy2}
\begin{split}
J_{kk_1k_2}^0(t)&:=\sum_{\mu,\nu=\pm}\|P_kB_{\Phi^{-1}_{\mu\nu}a_{\mu\nu}}
(P_{k_1}U_\mu,P_{k_2}U_\nu))(t)\|_{L^2(\R^d)},\\
J_{kk_1k_2}^1(s)&:=\sum_{\mu,\nu=\pm}\|P_kB_{\Phi^{-1}_{\mu\nu}a_{\mu\nu}}
(P_{k_1}\cN_\mu,P_{k_2}U_\nu)(s)\|_{L^2(\R^d)},\\
J_{kk_1k_2}^2(s)&:=\sum_{\mu,\nu=\pm}\|P_kB_{\Phi^{-1}_{\mu\nu}a_{\mu\nu}}
(P_{k_1}U_\mu,P_{k_2}\cN_\nu)(s)\|_{L^2(\R^d)}.
\end{split}
\end{equation}
We only deal with the case of $k_1\le k_2$ in $\cX_k$ of \eqref{low:energy2} since
the case of $k_1\ge k_2$ can be analogously treated.

\vskip 0.2 true cm

\noindent\textbf{Estimate of $J_{kk_1k_2}^0(t)$:}
Applying \eqref{bilinear:e} and the Bernstein inequality to obtain
\begin{equation}\label{scatter:pf}
\begin{split}
J_{kk_1k_2}^0(t)
&\ls2^{k_1(2d+3)+k_2}\|P_{k_1}U(t)\|_{L^\infty(\R^d)}\|P_{k_2}U(t)\|_{L^2(\R^d)}\\
&\ls2^{k_1(5d/2+3)+k_2}\|P_{k_1}U(t)\|_{L^2(\R^d)}\|P_{k_2}U(t)\|_{L^2(\R^d)}.
\end{split}
\end{equation}
Thus,
\begin{equation}\label{low:energy3}
\begin{split}
\Big\|\sum_{(k_1,k_2)\in\cX_k}J_{kk_1k_2}^0(t)\Big\|_{\ell^2_k}
\ls\|U(t)\|_{H^{[5d/2]+4}(\R^d)}\|U(t)\|_{H^2(\R^d)}\ls\|U(t)\|^2_{H^N(\R^d)},
\end{split}
\end{equation}
where $N>[5d/2]+4$ is used.

\vskip 0.2 true cm

\noindent\textbf{Estimate of $J_{kk_1k_2}^1(s)$:}
By \eqref{bilinear:b} and \eqref{bilinear:e}, one can arrive at
\begin{equation*}
\begin{split}
J_{kk_1k_2}^1(s)
&\ls2^{k_1(2d+3)+k_2}\|P_{k_1}\cN(s)\|_{L^\infty(\R^d)}\|P_{k_2}U(s)\|_{L^2(\R^d)}\\
&\ls2^{k_1(2d+3)-k_2}\sum_{(k_3,k_4)\in\cX_{k_1}}\sum_{\mu,\nu=\pm}
\|P_{k_1}B_{a_{\mu\nu}}(P_{k_3}U_\mu,P_{k_4}U_\nu)(s)\|_{L^\infty(\R^d)}\|U(s)\|_{H^3(\R^d)}\\
&\ls2^{-k_2}\sum_{k_3}2^{k_3(2d+3)}\|P_{k_3}U(s)\|_{L^\infty(\R^d)}
\|U(s)\|_{W^{2,\infty}(\R^d)}\|U(s)\|_{H^3(\R^d)}.
\end{split}
\end{equation*}
Similarly to \eqref{low:energy3}, we achieve
\begin{equation}\label{low:energy4}
\Big\|\sum_{(k_1,k_2)\in\cX_k}J_{kk_1k_2}^1(s)\Big\|_{\ell^2_k}
\ls\|U(s)\|_{W^{2d+4,\infty}(\R^d)}\|U(s)\|_{W^{2,\infty}(\R^d)}\|U(s)\|_{H^N(\R^d)}.
\end{equation}

\vskip 0.2 true cm

\noindent\textbf{Estimate of $J_{kk_1k_2}^2(s)$:}
Applying \eqref{bilinear:b} and \eqref{bilinear:e} again yields
\begin{equation*}
\begin{split}
J_{kk_1k_2}^2(s)
&\ls2^{k_1(2d+3)+k_2}\|P_{k_1}U(s)\|_{L^\infty(\R^d)}\|P_{k_2}\cN(s)\|_{L^2(\R^d)}\\
&\ls2^{k_2}\|U(s)\|_{W^{2d+4,\infty}(\R^d)}\sum_{(k_3,k_4)\in\cX_{k_2}}\sum_{\mu,\nu=\pm}
\|P_{k_2}B_{a_{\mu\nu}}(P_{k_3}U_\mu,P_{k_4}U_\nu)(s)\|_{L^2(\R^d)}\\
&\ls\|U(s)\|_{W^{2d+4,\infty}(\R^d)}\sum_{(k_3,k_4)\in\cX_{k_2}}
2^{k_2+k_4}\|P_{k_3}U(s)\|_{L^\infty(\R^d)}\|P_{k_4}U(s)\|_{L^2(\R^d)}\\
&\ls2^{-k_2}\|U(s)\|_{W^{2d+4,\infty}(\R^d)}\|U(s)\|_{W^{4,\infty}(\R^d)}\|U(s)\|_{H^4(\R^d)}.
\end{split}
\end{equation*}
Then
\begin{equation}\label{low:energy5}
\Big\|\sum_{(k_1,k_2)\in\cX_k}J_{kk_1k_2}^2(s)\Big\|_{\ell^2_k}
\ls\|U(s)\|^2_{W^{2d+4,\infty}(\R^d)}\|U(s)\|_{H^N(\R^d)}.
\end{equation}
In addition, the estimate on the second line in \eqref{low:energy1} is analogously.
Therefore, collecting \eqref{low:energy1}--\eqref{low:energy5} leads to
\begin{equation*}
\begin{split}
\|V(t)-V(1)\|_{L^2(\R^d)}&\ls\big\|\|P_k(V(t)-V(1))\|_{L^2}\big\|_{\ell^2_k}\\
&\ls\|U(1)\|^2_{H^N(\R^d)}+\|U(t)\|^2_{H^N(\R^d)}
+\int_1^t\|U(s)\|^2_{W^{2d+4,\infty}(\R^d)}\|U(s)\|_{H^N(\R^d)}ds.
\end{split}
\end{equation*}
On the other hand, one has
\begin{equation*}
\|U(t)\|_{L^2(\R^d)}\ls\|V(t)\|_{L^2(\R^d)}\ls\|V(1)\|_{L^2(\R^d)}
+\|V(t)-V(1)\|_{L^2(\R^d)}.
\end{equation*}
Consequently, \eqref{low:energy} is proved.
\end{proof}

\subsection{Proof of Theorem \ref{thm1}}
\begin{proof}[Proof of Theorem \ref{thm1}]
Suppose that for any $t\in[1,T_\ve)$,
\begin{equation*}
\|U(t)\|_{H^N(\R^d)}\le\ve_1.
\end{equation*}
By \eqref{initial:data}, the Strichartz estimate \eqref{Strichartz}, energy estimate Proposition \ref{prop:high:energy},
Lemmas \ref{lem:goodunknown} and \ref{lem:low:energy}, there is a constant $C_1\ge1$ such that
\begin{equation}\label{thm1:pf}
\begin{split}
&\quad\;\|U(t)\|_{H^N(\R^d)}\\
&\ls\ve+\ve_1^2+\ve_1\sum_{k_1,k_2}2^{(k_1+k_2)(2d+5+1/8)}
\|P_{k_1}U(s)\|_{L^2([1,t])L^\infty(\R^d)}\|P_{k_2}U(s)\|_{L^2([1,t])L^\infty(\R^d)}\\
&\ls\ve+\ve_1^2+\ve_1c_d^2(t)\sum_{k_1,k_2}2^{(k_1+k_2)(2d+5+1/8+d/2)}
\|P_{k_1}U\|_{L^2(\R^d)}\|P_{k_2}U\|_{L^2(\R^d)}\\
&\le C_1(\ve+\ve_1^2+\ve_1^3c_d^2(t)),
\end{split}
\end{equation}
where $N>2d+5+1/8+d/2$ is used.
Note that for $t\in[1,T_\ve)$, $c_d^2(t)=1$ when $d\ge3$, $c_2^2(t)\le\ln t\le\frac{\kappa}{\ve^2}$.
Choosing $\ve_0=\frac{1}{16C_1^2}$, $\kappa=\frac{1}{64C_1^3}$ and $\ve_1=4C_1$, then it follows from \eqref{thm1:pf} that
\begin{equation*}
\|U(t)\|_{H^N(\R^d)}\le3\ve_1/4.
\end{equation*}
This, together with the local existence of classical solution to \eqref{KG}, ensures that \eqref{KG} admits a unique solution $u\in C([0,T_\ve),H^{N+1}(\R^d))\cap C^1([0,T_\ve),H^N(\R^d))$.
\end{proof}
\begin{remark}\label{thm1pf:d=1}
For $d=1$, set $T_\ve=\kappa^2/\ve^4$. Note that for $t\in[1,T_\ve)$ one has $c_1^2(t)\le t^{1/2}\le\frac{\kappa}{\ve^2}$.
Then for $N\ge8$, Theorem \ref{thm1} holds for $d=1$ with $T_\ve=\kappa^2/\ve^4$.
\end{remark}

\section{Weighted $L^2$ norm estimate and proof of Theorem \ref{thm2}}

In this section, we restrict $d=2$ in problem \eqref{KG}.
Suppose that for $N\ge12$,  $\alpha\in(0,1/5)$ and $t\ge1$,
\begin{equation}\label{BA:d=2}
\|U(t)\|_{H^{N}(\R^2)}+\|\w{x}^\alpha V(t)\|_{L^2(\R^2)}\le\ve_2,
\end{equation}
where $U,V$ are defined by \eqref{U:def} and \eqref{profile:def}.
Define the dyadic decomposition in the Euclidean physical space $\R^2$
\begin{equation}
(Q_jf)(x):=\psi_j(x)f(x),\quad j\in\Z,j\ge-1.
\end{equation}

\begin{lemma}
Suppose that $V$ is defined by \eqref{profile:def}, for any $\alpha\in(0,1/5)$, we have
\begin{equation}\label{Znorm:local}
2^{j\alpha}\|Q_jP_kV\|_{L^2(\R^2)}\ls\|\w{x}^\alpha V\|_{L^2(\R^2)}
\ls\big\|2^{j\alpha}\|Q_jP_kV\|_{L^2(\R^2)}\big\|_{\ell^1_k\ell^2_j}.
\end{equation}
\end{lemma}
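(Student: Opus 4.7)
The plan is to establish the two directions of the equivalence separately. Both arguments rely on matching the physical-space localization given by $Q_j$ with the weight $\w{x}^\alpha$; the key analytic ingredient is the uniform $L^2$-boundedness of the conjugated Littlewood-Paley projection $\w{x}^\alpha P_k\w{x}^{-\alpha}$ in $k\ge-1$.

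For the first inequality, observe that $\psi_j$ is supported in $\{\w{x}\ls 2^j+1\}$, so pointwise $2^{j\alpha}|Q_jg(x)|\ls\w{x}^\alpha|g(x)|$ for every function $g$. Taking $g=P_kV$ gives $2^{j\alpha}\|Q_jP_kV\|_{L^2}\ls\|\w{x}^\alpha P_kV\|_{L^2}$, and it remains to prove
\begin{equation*}
\|\w{x}^\alpha P_k\w{x}^{-\alpha}\|_{L^2\to L^2}\ls 1\quad\text{uniformly in }k\ge-1.
\end{equation*}
I would establish this by Schur's test applied to the integral kernel $\w{x}^\alpha K_k(x-y)\w{y}^{-\alpha}$, combining Peetre's inequality $\w{x}^\alpha\w{y}^{-\alpha}\ls\w{x-y}^\alpha$ with the fast decay $|K_k(z)|\ls_N 2^{2k}(1+2^k|z|)^{-N}$ for $k\ge 0$ (and the fixed Schwartz decay for $k=-1$). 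After the substitution $w=2^kz$ the row integral becomes $\int(1+|w|)^{-N}(1+2^{-k}|w|)^\alpha dw$, which is finite once $N>\alpha+2$, and the column integral is symmetric.

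For the second inequality, decompose $V=\sum_{k\ge-1}P_kV$ and apply the triangle inequality to obtain $\|\w{x}^\alpha V\|_{L^2}\le\sum_k\|\w{x}^\alpha P_kV\|_{L^2}$. For each fixed $k$, since $\sum_jQ_j\equiv{\rm Id}$ with uniformly bounded overlap of supports, one has $\|f\|_{L^2}^2\ls\sum_j\|Q_jf\|_{L^2}^2$; applying this to $f=\w{x}^\alpha P_kV$ and using $\w{x}\sim 2^j$ on the support of $\psi_j$ yields
\begin{equation*}
\|\w{x}^\alpha P_kV\|_{L^2}^2\ls\sum_j\|\w{x}^\alpha Q_jP_kV\|_{L^2}^2\ls\sum_j 2^{2j\alpha}\|Q_jP_kV\|_{L^2}^2.
\end{equation*}
Taking the square root and summing in $k$ produces exactly the $\ell^1_k\ell^2_j$ expression on the right-hand side.

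The main technical obstacle is the uniformity in $k$ of the Schur-test bound for $\w{x}^\alpha P_k\w{x}^{-\alpha}$: for a single $k$ this is routine, but uniformity forces one to exploit the fact that $K_k$ concentrates on the scale $2^{-k}$ while the weight varies only on the much larger scale $\w{x}$, which is precisely what Peetre's inequality captures. The hypothesis $\alpha\in(0,1/5)$ is used here only insofar as $\alpha<d=2$ is needed for the Schur integrals to converge; the finer constraint $\alpha<1/5$ comes into play elsewhere in the proof of Theorem \ref{thm2}.
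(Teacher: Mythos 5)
Your proof is correct, and the overall skeleton matches the paper: both directions are reduced, exactly as you do, to the uniform weighted boundedness $\|\w{x}^\alpha P_kV\|_{L^2}\ls\|\w{x}^\alpha V\|_{L^2}$ for the first inequality, and to triangle/Minkowski over $k$ combined with $\w{x}\sim 2^j$ on $\supp\psi_j$ (via finite overlap of the $Q_j$) for the second. The one place you diverge is in how the weighted boundedness of $P_k$ is established: the paper simply observes that $\w{x}^\alpha\in A_2(\R^2)$ (which holds since $\alpha<2$) and invokes the Muckenhoupt theory for Calder\'on--Zygmund operators, whereas you prove $\|\w{x}^\alpha P_k\w{x}^{-\alpha}\|_{L^2\to L^2}\ls 1$ directly by Schur's test on the kernel, using the rapid decay of $K_k$ at scale $2^{-k}$ together with Peetre's inequality $\w{x}^\alpha\w{y}^{-\alpha}\ls\w{x-y}^\alpha$. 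Your route is more elementary and self-contained, and has the advantage of making the uniformity in $k$ transparent (it drops out of the substitution $w=2^kz$), whereas the paper's one-line citation leaves the reader to recall that the $A_2$ constants for the $P_k$ are uniform. The paper's approach is shorter and leans on standard machinery. Both are sound, and you are also right that the lemma only requires $\alpha<2$; the sharper constraint $\alpha\in(0,1/5)$ in the hypothesis is not used here and is carried along only for consistency with the rest of the argument.
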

\begin{proof}
It is obvious that $P_k$ is a bounded operator with $\|P_kV\|_{L^2(\R^2)}\ls\|V\|_{L^2(\R^2)}$.
On the other hand, $\w{x}^{\alpha}$ belongs to $A_2$ class (see \cite{Stein}) and one can achieve $\|\w{x}^\alpha P_kV\|_{L^2(\R^2)}\ls\|\w{x}^{\alpha}V\|_{L^2(\R^2)}$, which yields
\begin{equation*}
2^{j\alpha}\|Q_jP_kV\|_{L^2(\R^2)}\ls\|\w{x}^\alpha P_kV\|_{L^2(\R^2)}
\ls\|\w{x}^\alpha V\|_{L^2(\R^2)}.
\end{equation*}
Thus, we have proved the first inequality in \eqref{Znorm:local}.
The second inequality in \eqref{Znorm:local} can be obtained by the Minkowski inequality
\begin{equation*}
\|\w{x}^\alpha V\|_{L^2(\R^2)}\ls\|\sum_{k\ge-1}\w{x}^\alpha P_kV\|_{L^2(\R^2)}
\ls\sum_{k\ge-1}\|\w{x}^\alpha P_kV\|_{L^2(\R^2)}
\ls\sum_{k\ge-1}\big\|2^{j\alpha}\|Q_jP_kV\|_{L^2(\R^2)}\big\|_{\ell^2_j}.
\end{equation*}
\end{proof}
It follows from the first term in \eqref{BA:d=2} that
\begin{equation*}
\|Q_jP_kV(t)\|_{L^2(\R^2)}\ls\|P_kV(t)\|_{L^2(\R^2)}\ls2^{-Nk}\ve_2.
\end{equation*}
Interpolating this inequality with \eqref{Znorm:local} yields that for any $n\in[0,N]$,
\begin{equation}\label{BA:d=2'}
\|Q_jP_kV(t)\|_{L^2(\R^2)}\ls2^{-j\alpha(1-n/N)-nk}\ve_2.
\end{equation}

\subsection{Localized dispersive estimate and Strichartz estimate}

\begin{lemma}[Localized dispersive estimate]
Suppose that $U,V$ are defined by \eqref{U:def}, \eqref{profile:def} and the bootstrap assumption \eqref{BA:d=2} holds.
For any $n_1,n_2\in[0,N]$ and $t\ge1$, one has
\begin{equation}\label{local:disp}
\|e^{it\mu\Lambda}P_{[k-1,k+1]}Q_jP_kV_\mu\|_{L^\infty(\R^2)}
\ls2^{k(1-n_1+\alpha(1-n_2/N)+j\alpha(n_1-n_2)/N}t^{-\alpha(1-n_2/N)}\ve_2.
\end{equation}
\end{lemma}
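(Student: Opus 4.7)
The strategy is to interpolate between a Bernstein-type $L^2\to L^\infty$ bound and the dispersive estimate of Lemma \ref{lem:disp}, and then insert the weighted $L^2$ control \eqref{BA:d=2'}. The two free parameters $n_1$ and $n_2$ will play decoupled roles: $n_2$ will fix the interpolation weight between the Bernstein and dispersive endpoints (and hence the $t$-decay), while $n_1$ will choose which regularity level of \eqref{BA:d=2'} we use to bound $\|Q_jP_kV\|_{L^2}$.

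First I would record two endpoint bounds in which the left-hand side of \eqref{local:disp} is controlled by $\|Q_jP_kV_\mu\|_{L^2(\R^2)}$. Since $e^{it\mu\Lambda}$ commutes with $P_{[k-1,k+1]}$ and is an $L^2$-isometry, Bernstein's inequality at frequency scale $2^k$ yields
\begin{equation*}
\|e^{it\mu\Lambda}P_{[k-1,k+1]}Q_jP_kV_\mu\|_{L^\infty(\R^2)}\ls 2^k\|Q_jP_kV_\mu\|_{L^2(\R^2)}.
\end{equation*}
On the other hand, writing $P_{[k-1,k+1]}=P_{k-1}+P_k+P_{k+1}$, applying Lemma \ref{lem:disp} with $d=2$ to each piece (together with the standard $L^1$-boundedness of the Littlewood--Paley kernel), and then using Cauchy--Schwarz on the annular support $\{|x|\sim 2^j\}$ of $\psi_j$, gives
\begin{equation*}
\|e^{it\mu\Lambda}P_{[k-1,k+1]}Q_jP_kV_\mu\|_{L^\infty(\R^2)}\ls 2^{2k}t^{-1}\|Q_jP_kV_\mu\|_{L^1(\R^2)}\ls 2^{2k+j}t^{-1}\|Q_jP_kV_\mu\|_{L^2(\R^2)}.
\end{equation*}

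Next I would take a geometric mean of these two inequalities with weight $\beta:=\alpha(1-n_2/N)\in[0,\alpha]\subset[0,1]$ on the dispersive side, which produces
\begin{equation*}
\|e^{it\mu\Lambda}P_{[k-1,k+1]}Q_jP_kV_\mu\|_{L^\infty(\R^2)}\ls 2^{(1+\beta)k+\beta j}t^{-\beta}\|Q_jP_kV_\mu\|_{L^2(\R^2)}.
\end{equation*}
I would then apply \eqref{BA:d=2'} at parameter $n=n_1$ (valid for both $V_\mu=V$ and $V_\mu=\overline V$, since $\psi_j$ and $\psi_k$ are real-valued, so $Q_jP_k$ commutes with complex conjugation) to obtain $\|Q_jP_kV_\mu\|_{L^2}\ls 2^{-\alpha(1-n_1/N)j-n_1 k}\ve_2$, and simplify the exponents using the identities $1+\beta-n_1=1-n_1+\alpha(1-n_2/N)$ and $\beta-\alpha(1-n_1/N)=\alpha(n_1-n_2)/N$ to recover \eqref{local:disp} exactly.

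There is no substantive obstacle in this plan; the only point to verify is that the interpolation weight $\beta$ lies in $[0,1]$, which is immediate from $\alpha\in(0,1/5)$ and $n_2\in[0,N]$. The conceptual content is just the realization that introducing two independent parameters $n_1,n_2$ in the conclusion corresponds to decoupling the choice of interpolation exponent from the choice of Sobolev/weighted regularity on the profile $V$.
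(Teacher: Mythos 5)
Your proposal is correct and follows essentially the same route as the paper: both establish the Bernstein endpoint $\ls 2^k\|Q_jP_kV\|_{L^2}$ and the dispersive endpoint $\ls 2^{2k+j}t^{-1}\|Q_jP_kV\|_{L^2}$ (via Lemma \ref{lem:disp} and Cauchy--Schwarz on the annulus), interpolate with weight $\alpha(1-n_2/N)$, and then invoke \eqref{BA:d=2'} with $n=n_1$. The only thing you add is an explicit remark that the two parameters $n_1,n_2$ decouple and a check that the interpolation exponent lies in $[0,1]$, both of which are implicit in the paper.
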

\begin{proof}
By the Bernstein inequality, we have
\begin{equation}\label{local:disp1}
\|e^{it\mu\Lambda}P_{[k-1,k+1]}Q_jP_kV_\mu\|_{L^\infty(\R^2)}
\ls2^k\|e^{it\mu\Lambda}Q_jP_kV_\mu\|_{L^2(\R^2)}\ls2^k\|Q_jP_kV\|_{L^2(\R^2)}.
\end{equation}
On the other hand, it can be deduced from \eqref{disp:estimate} that
\begin{equation}\label{local:disp2}
\|e^{it\mu\Lambda}P_{[k-1,k+1]}Q_jP_kV_\mu\|_{L^\infty(\R^2)}
\ls2^{2k}t^{-1}\|Q_jP_kV\|_{L^1(\R^2)}
\ls2^{2k+j}t^{-1}\|Q_jP_kV\|_{L^2(\R^2)}.
\end{equation}
Interpolation between \eqref{local:disp1} and \eqref{local:disp2} leads to
\begin{equation*}
\|e^{it\mu\Lambda}P_{[k-1,k+1]}Q_jP_kV_\mu\|_{L^\infty(\R^2)}
\ls2^{k+\alpha(1-n_2/N)(k+j)}t^{-\alpha(1-n_2/N)}\|Q_jP_kV\|_{L^2(\R^2)}.
\end{equation*}
This, together with \eqref{BA:d=2'}, yields \eqref{local:disp}.
\end{proof}

\begin{lemma}[Localized Strichartz estimate]
Suppose that $U,V$ are defined by \eqref{U:def}, \eqref{profile:def} and the bootstrap assumption \eqref{BA:d=2} holds.
For any $0\le\beta_1<\beta_2\le1$, $n\in[0,N]$ and $t\ge1$, one has
\begin{equation}\label{local:Stric}
\|s^{\beta_1/2}e^{is\mu\Lambda}P_{[k-1,k+1]}Q_jP_kV_\mu\|_{L^2([1,t])L^\infty(\R^2)}
\ls2^{k(1+\beta_2-n)+j\beta_2-j\alpha(1-n/N)}\ve_2.
\end{equation}
\end{lemma}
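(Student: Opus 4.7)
The plan is to interpolate between two endpoint estimates in $L^p_s L^\infty_x$ with $p>2$, and then convert the resulting $L^p$-in-time bound into an $L^2$-in-time bound via H\"older, trading temporal integrability for a polynomial weight in $s$. This follows the scheme outlined in \eqref{intro:Stric}--\eqref{intro:Stric3}, here refined by the spatial dyadic cutoff $Q_j$.

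For the first endpoint, I would apply \eqref{Strichartz'} to $P_{[k-1,k+1]}Q_jP_kV_\mu$, whose Fourier support is contained in $\{|\xi|\sim 2^k\}$, which yields $\|e^{is\mu\Lambda}P_{[k-1,k+1]}Q_jP_kV_\mu\|_{L^p([1,t])L^\infty(\R^2)}\ls 2^k\|Q_jP_kV\|_{L^2(\R^2)}$ with implicit constant depending on $p$. For the second endpoint, I would combine the dispersive bound \eqref{disp:estimate} for $d=2$ with the Cauchy--Schwarz inequality $\|Q_jP_kV\|_{L^1}\ls 2^j\|Q_jP_kV\|_{L^2}$ (exploiting that $\supp Q_j$ has area $\sim 2^{2j}$), giving the pointwise-in-time decay
$$\|e^{is\mu\Lambda}P_{[k-1,k+1]}Q_jP_kV_\mu\|_{L^\infty(\R^2)}\ls 2^{2k+j}s^{-1}\|Q_jP_kV\|_{L^2(\R^2)}.$$
Since $\|s^{-1/2}\|_{L^p([1,t])}\ls 1$ uniformly in $t$ for any $p>2$, this produces $\|s^{1/2}e^{is\mu\Lambda}P_{[k-1,k+1]}Q_jP_kV_\mu\|_{L^p([1,t])L^\infty(\R^2)}\ls 2^{2k+j}\|Q_jP_kV\|_{L^2(\R^2)}$. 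Applying the Lyapunov inequality $\|s^{\beta_2/2}a\|_{L^p}\le\|a\|_{L^p}^{1-\beta_2}\|s^{1/2}a\|_{L^p}^{\beta_2}$ to $a(s)=\|e^{is\mu\Lambda}P_{[k-1,k+1]}Q_jP_kV_\mu\|_{L^\infty_x}$, and invoking \eqref{BA:d=2'} to bound $\|Q_jP_kV\|_{L^2}\ls 2^{-nk-j\alpha(1-n/N)}\ve_2$, I arrive at
$$\|s^{\beta_2/2}e^{is\mu\Lambda}P_{[k-1,k+1]}Q_jP_kV_\mu\|_{L^p([1,t])L^\infty(\R^2)}\ls 2^{k(1+\beta_2-n)+j\beta_2-j\alpha(1-n/N)}\ve_2,$$
which already has the target right-hand side of \eqref{local:Stric}.

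To pass from $L^p$ to $L^2$ in time, I would write $s^{\beta_1/2}=s^{(\beta_1-\beta_2)/2}\cdot s^{\beta_2/2}$ and apply H\"older in $s$ with the conjugate pair $\bigl(\frac{2p}{p-2},p\bigr)$. The factor $\|s^{(\beta_1-\beta_2)/2}\|_{L^{2p/(p-2)}([1,t])}$ is bounded uniformly in $t$ exactly when $p<\frac{2}{1-\beta_2+\beta_1}$; since $\beta_2>\beta_1$, the admissible interval $\bigl(2,\frac{2}{1-\beta_2+\beta_1}\bigr)$ is nonempty, and any $p$ in its interior, say the midpoint, closes the estimate. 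The only delicate point is this final choice of $p$: both the Strichartz constant $(p-2)^{-1/p}$ in \eqref{Strichartz'} and the weighted norm $\|s^{(\beta_1-\beta_2)/2}\|_{L^{2p/(p-2)}}$ diverge at the endpoints of the admissible interval, so the implicit constant in \eqref{local:Stric} necessarily degenerates as $\beta_2-\beta_1\to 0^+$; apart from this scaling issue, the proof is a clean two-step interpolation and requires no resonance analysis at this stage.
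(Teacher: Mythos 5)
Your proposal is correct and follows the same route as the paper's proof: the same two endpoint bounds (the $L^p_sL^\infty_x$ Strichartz estimate \eqref{Strichartz'} and the $s^{1/2}$-weighted bound obtained from the $t^{-1}$ dispersive estimate after Cauchy--Schwarz on the spatial ball of radius $\sim 2^j$), the same Lyapunov/H\"older interpolation in $s$ to produce the $s^{\beta_2/2}$ weight, the insertion of \eqref{BA:d=2'}, and the final H\"older step with conjugate pair $\bigl(\tfrac{2p}{p-2},p\bigr)$ to pass from $L^p_s$ to $L^2_s$ with the residual weight $s^{\beta_1/2}$. Your closing observation about the degeneration of the implicit constant as $\beta_2-\beta_1\to 0^+$ is accurate and, while the paper does not comment on it, it is harmless here since in every application $\beta_2-\beta_1$ is a fixed positive quantity depending only on $\alpha$ and $N$.
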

\begin{proof}
\eqref{local:disp1} ensures that for any $p\in(2,\infty)$,
\begin{equation*}
\|s^{1/2}P_{[k-1,k+1]}e^{is\mu\Lambda}Q_jP_kV_\mu\|_{L^p([1,t])L^\infty(\R^2)}
\ls\frac{2^{2k+j}}{(p-2)^{1/p}}\|Q_jP_kV_\mu\|_{L^2(\R^2)}.
\end{equation*}
Interpolating this inequality with \eqref{Strichartz'} yields
\begin{equation*}
\|s^{\beta_2/2}P_{[k-1,k+1]}e^{is\mu\Lambda}Q_jP_kV_\mu\|_{L^p([1,t])L^\infty(\R^2)}
\ls\frac{2^{k+\beta_2(k+j)}}{(p-2)^{1/p}}\|Q_jP_kV_\mu\|_{L^2(\R^2)}.
\end{equation*}
Choosing $p=\frac{2}{1-(\beta_2-\beta_1)/2}\in(2,\frac{2}{1-(\beta_2-\beta_1)})$.
Then we can conclude from the H\"{o}lder inequality that
\begin{equation*}
\begin{split}
&\quad\;\|s^{\beta_1/2}P_{[k-1,k+1]}e^{is\mu\Lambda}Q_jP_kV_\mu\|_{L^2([1,t])L^\infty(\R^2)}\\
&\ls\|s^{(\beta_1-\beta_2)/2}\|_{L^\frac{2p}{p-2}([1,t])}
\|s^{\beta_2/2}P_{[k-1,k+1]}e^{is\mu\Lambda}Q_jP_kV_\mu\|_{L^p([1,t])L^\infty(\R^2)}\\
&\ls2^{k+\beta_2(k+j)}\|Q_jP_kV_\mu\|_{L^2(\R^2)}.
\end{split}
\end{equation*}
This, together with \eqref{BA:d=2'}, leads to \eqref{local:Stric}.
\end{proof}

\subsection{Weighted $L^2$ norm estimate}

\begin{lemma}\label{lem:Znorm1}
Suppose that $U,V$ are defined by \eqref{U:def}, \eqref{profile:def} and the bootstrap assumption \eqref{BA:d=2} holds, then we have
\begin{equation}\label{Znorm:bdry}
\|\w{x}^\alpha e^{-it\Lambda}B_{\Phi^{-1}_{\mu\nu}a_{\mu\nu}}(U_\mu,U_\nu)(t)\|_{L^2(\R^2)}
\ls\ve_2^2.
\end{equation}
\end{lemma}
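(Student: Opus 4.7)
First, I would rewrite the quantity in terms of the profiles by using $U_\mu = e^{it\mu\Lambda}V_\mu$, so that
\begin{equation*}
\cF\big[e^{-it\Lambda}B_{\Phi^{-1}_{\mu\nu}a_{\mu\nu}}(U_\mu,U_\nu)\big](\xi)
= \int_{\R^2}(\Phi^{-1}_{\mu\nu}a_{\mu\nu})(\xi-\eta,\eta)\,
e^{it\Phi_{\mu\nu}(\xi-\eta,\eta)}\hat V_\mu(\xi-\eta)\hat V_\nu(\eta)\,d\eta.
\end{equation*}
Call this function $B(t,\cdot)$. By the equivalence in \eqref{Znorm:local}, the estimate \eqref{Znorm:bdry} is reduced to showing $\sum_{k\ge-1}\big\|2^{j\alpha}\|Q_jP_kB\|_{L^2(\R^2)}\big\|_{\ell^2_j}\ls\ve_2^2$.

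Next, I would dyadically decompose both profiles in frequency and physical space, $V_\mu=\sum_{k_1,j_1}P_{k_1}Q_{j_1}V_\mu$ and $V_\nu=\sum_{k_2,j_2}P_{k_2}Q_{j_2}V_\nu$, and use the frequency support constraint $(k_1,k_2)\in\cX_k$ coming from \eqref{prod:frequency}. By symmetry I may assume $k_1\le k_2$. For each dyadic piece, I would apply the bilinear multiplier bound \eqref{bilinear:e} from Appendix A, placing the lower-frequency factor $e^{it\mu\Lambda}P_{k_1}Q_{j_1}V_\mu$ in $L^\infty$ via the localized dispersive estimate \eqref{local:disp}, and the higher-frequency factor $e^{it\nu\Lambda}P_{k_2}Q_{j_2}V_\nu$ in $L^2$ via \eqref{BA:d=2'}; the outer $e^{-it\Lambda}$ is an $L^2$-isometry and drops out once the bilinear expression has been formed.

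To produce the output weight $2^{j\alpha}$ I would split into regimes according to the size of $2^j$ relative to $\max\{2^{j_1},2^{j_2}\}$ and $t$. When $2^j\ls\max\{2^{j_1},2^{j_2}\}$, the weight is absorbed into the input weights by \eqref{BA:d=2'}. When $\max\{2^{j_1},2^{j_2}\}\ls 2^j\ls t$, the weight is at most $t^\alpha$ and is compensated by the factor $t^{-\alpha(1-n_2/N)}$ in \eqref{local:disp} after a suitable choice of the interpolation parameters $n_1,n_2$. When $2^j\gg t$, the finite propagation speed of $e^{-it\Lambda}$ forces $Q_jP_kB$ to be negligible; I would extract this through integration by parts in $\eta$ exploiting the non-stationarity of the phase $e^{ix\cdot\xi+it\Phi_{\mu\nu}}$ and the bounds in Lemma \ref{lem:phase}, which turn each derivative into a harmless multiple of the symbol itself.

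The principal obstacle is that $e^{-it\Lambda}$ spreads the spatial support at unit speed, so the naive insertion of $Q_j$ on the bilinear output costs a factor of order $t^\alpha$. Closing the estimate relies crucially on the smallness $\alpha<1/5$: the decay $t^{-\alpha(1-n_2/N)}$ from the localized dispersive estimate, combined with the $\ell^1_k$ (rather than $\ell^2_k$) structure of the weighted norm in \eqref{Znorm:local}, must beat this $t^\alpha$ loss uniformly in $t$. The assumption $N\ge 12$ ensures enough Sobolev regularity to absorb the symbol order $2d+3$ coming from \eqref{bilinear:e} and the numerous $2^{k_i}$ factors arising in the six dyadic sums, so that the geometric sums over $k,k_1,k_2,j,j_1,j_2$ all converge and the final bound is indeed $\ls\ve_2^2$ independently of $t$.
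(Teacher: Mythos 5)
Your overall strategy — reducing via \eqref{Znorm:local} to an $\ell^1_k\ell^2_j$ bound, decomposing both profiles dyadically in frequency and space, applying \eqref{bilinear:e} with the low-frequency factor in $L^\infty$ and the high-frequency factor in $L^2$, and handling the far-out spatial region by non-stationary phase — is the same as the paper's. The paper's case split is organized by comparing $j$ and $j_2$ against $\log_2 t$, whereas yours compares $2^j$ against $\max\{2^{j_1},2^{j_2}\}$ and $t$; these are equivalent ways of slicing the same analysis.

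However, there is a genuine quantitative gap in your middle regime $\max\{2^{j_1},2^{j_2}\}\ls 2^j\ls t$. You claim that the output weight $2^{j\alpha}\le t^\alpha$ is ``compensated by the factor $t^{-\alpha(1-n_2/N)}$ in \eqref{local:disp}.'' But the localized dispersive estimate \eqref{local:disp} gives decay at most $t^{-\alpha}$ (taking $n_2=0$), so the product $t^\alpha\cdot t^{-\alpha(1-n_2/N)}=t^{\alpha n_2/N}\ge 1$ is merely bounded, leaving no room to carry out the sums over $j$ and $j_1$ uniformly in $t$: the $j$-range is $[-1,\log_2 t]$, which has $\sim\log t$ terms, and with $n_1=n_2=0$ the factor $2^{j_1\alpha(n_1-n_2)/N}$ from \eqref{local:disp} gives no decay in $j_1$ either, so the $\ell^2_j\ell^1_{j_1}$ sum blows up logarithmically in $t$. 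The paper's Cases~3 and~4 avoid this by switching to the \emph{unlocalized} dispersive estimate \eqref{disp:estimate}, which furnishes the full $t^{-1}$ decay at the price of a factor $2^{j_1}$ from Cauchy--Schwarz $(\|Q_{j_1}P_{k_1}V\|_{L^1}\ls 2^{j_1}\|Q_{j_1}P_{k_1}V\|_{L^2})$; this $2^{j_1}$ is then absorbed because $2^{j_1}\le t$ in that regime, and combining with the weighted decays $2^{-\alpha(j_1+j_2)(1-\frac{11}{2N})}$ from \eqref{BA:d=2'} leaves a net factor $t^{-\alpha(1-11/N)}$ that is strictly decaying (this is where $N\ge12$ enters) and therefore summable. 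Without passing from \eqref{local:disp} to \eqref{disp:estimate} in this regime, the estimate does not close.

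A minor further point: in your regime $2^j\gg t$, the argument that $Q_jP_k B$ is negligible by finite propagation speed only applies when $2^j$ also dominates $\max\{2^{j_1},2^{j_2}\}$; if one input is itself localized at scale $\sim2^j$, the output there is not negligible and must instead be controlled by the smallness of that input (which, under your taxonomy, falls into your first regime). You should state explicitly that the third regime carries the additional restriction $2^j\gg\max\{2^{j_1},2^{j_2}\}$, and that the integration by parts is carried out jointly in $\xi$ and $\eta$ (the paper's operator $L$ acts on both), not just in $\eta$.
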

\begin{proof}
Due to \eqref{Znorm:local}, it only suffices to show
\begin{equation*}
\big\|2^{j\alpha}\|Q_jP_ke^{-it\Lambda}B_{\Phi^{-1}_{\mu\nu}a_{\mu\nu}}
(U_\mu,U_\nu)(t)\|_{L^2(\R^2)}\big\|_{\ell^1_k\ell^2_j}\ls\ve_2^2.
\end{equation*}
By virtue of \eqref{proj:proj}, we can find that
\begin{equation}\label{Znorm:bdry1}
\begin{split}
&Q_jP_ke^{-it\Lambda}B_{\Phi^{-1}_{\mu\nu}a_{\mu\nu}}(U_\mu,U_\nu)
=\sum_{j_1,j_2\ge-1}\sum_{(k_1,k_2)\in\cX_k}I_{kk_1k_2}^{jj_1j_2},\\
&I_{kk_1k_2}^{jj_1j_2}:=Q_jP_ke^{-it\Lambda}B_{\Phi^{-1}_{\mu\nu}a_{\mu\nu}}
(e^{it\mu\Lambda}P_{[[k_1]]}Q_{j_1}P_{k_1}V_\mu,
e^{it\nu\Lambda}P_{[[k_2]]}Q_{j_2}P_{k_2}V_\nu),
\end{split}
\end{equation}
where $[[k]]:=[k-1,k+1]$.
We only require to deal with the case of $k_1\le k_2$ in \eqref{Znorm:bdry1}
since the case of $k_1\ge k_2$ can be treated analogously.

\vskip 0.2 true cm
\noindent\textbf{Case 1. $j\ge\log_2t+10$}
\vskip 0.1 true cm

In this case, $I_{kk_1k_2}^{jj_1j_2}$ can be recast as
\begin{equation*}
\begin{split}
I_{kk_1k_2}^{jj_1j_2}(t,x)
&=(2\pi)^{-4}\psi_j(x)\iint_{(\R^2)^2}K(x-y,x-z)Q_{j_1}P_{k_1}V_\mu(t,y)Q_{j_2}P_{k_2}V_\nu(t,z)dydz,\\
\end{split}
\end{equation*}
where
\begin{equation*}
\begin{split}
K(x-y,x-z)&:=\iint_{(\R^2)^2}e^{i\tilde\Phi}(\Phi^{-1}_{\mu\nu}a_{\mu\nu})(\xi,\eta)
\psi_k(\xi+\eta)\psi_{[[k_1]]}(\xi)\psi_{[[k_2]]}(\eta)d\xi d\eta,\\
\tilde\Phi&:=\xi\cdot(x-y)+\eta\cdot(x-z)+t\Phi_{\mu\nu}(\xi,\eta).
\end{split}
\end{equation*}
By Lemma \ref{lem:phase}, for $\xi+\eta\in\supp\psi_k$, $\xi\in\supp\psi_{[[k_1]]}$, $\eta\in\supp\psi_{[[k_2]]}$ and $(k_1,k_2)\in\cX_k$ we have
\begin{equation*}
\begin{split}
&|\p_\xi\Phi_{\mu\nu}(\xi,\eta)|+|\p_\eta\Phi_{\mu\nu}(\xi,\eta)|\le4,\\
&|\p_{\xi,\eta}^l\Phi_{\mu\nu}(\xi,\eta)|\ls1,\quad
|\p_{\xi,\eta}^l\Phi^{-1}_{\mu\nu}(\xi,\eta)|\ls|\Phi^{-1}_{\mu\nu}|\ls2^{k_1},\quad l\ge1.
\end{split}
\end{equation*}
If $\max\{|j-j_1|,|j-j_2|\}\ge5$, for $x\in\supp\psi_j,y\in\supp\psi_{j_1},z\in\supp\psi_{j_2}$, one then has
\begin{equation*}
|x-y|+|x-z|\ge8t,\qquad2^{\max\{j,j_1,j_2\}}\ls|x-y|+|x-z|.
\end{equation*}
This ensures
\begin{equation*}
\max\{t,2^{\max\{j,j_1,j_2\}}\}\ls|x-y|+|x-z|
\ls|\p_\xi\tilde\Phi|+|\p_\eta\tilde\Phi|.
\end{equation*}
Let
\begin{equation*}
\begin{split}
L&:=-i(|\p_\xi\tilde\Phi|^2+|\p_\eta\tilde\Phi|^2)^{-1}
\sum_{l=1}^2(\p_{\xi_l}\tilde\Phi\p_{\xi_l}+\p_{\eta_l}\tilde\Phi\p_{\eta_l}),\\
L^*&:=i\sum_{l=1}^2\{\p_{\xi_l}(\frac{\p_{\xi_l}\tilde\Phi~\cdot}
{|\p_\xi\tilde\Phi|^2+|\p_\eta\tilde\Phi|^2}
+\p_{\eta_l}(\frac{\p_{\eta_l}\tilde\Phi~\cdot}
{|\p_\xi\tilde\Phi|^2+|\p_\eta\tilde\Phi|^2})\},
\end{split}
\end{equation*}
then one has $Le^{i\tilde\Phi}=e^{i\tilde\Phi}$.
It follows from the method of stationary phase that
\begin{equation*}
\begin{split}
&\quad\;|K(x-y,x-z)|\\
&=\Big|\iint_{(\R^2)^2}L^6(e^{i\tilde\Phi})(\Phi^{-1}_{\mu\nu}a_{\mu\nu})(\xi,\eta)
\psi_k(\xi+\eta)\psi_{[[k_1]]}(\xi)\psi_{[[k_2]]}(\eta)d\xi d\eta\Big|\\
&=\Big|\iint_{(\R^2)^2}e^{i\tilde\Phi}(L^*)^6\Big\{(\Phi^{-1}_{\mu\nu}a_{\mu\nu})(\xi,\eta)
\psi_k(\xi+\eta)\psi_{[[k_1]]}(\xi)\psi_{[[k_2]]}(\eta)\Big\}d\xi d\eta\Big|\\
&\ls2^{k_1+k_2-\max\{j,j_1,j_2\}}(1+|x-y|+|x-z|)^{-5}
(\sum_{l\le6}\|\p^l\psi_{[[k_1]]}\|_{L^1})(\sum_{l\le6}\|\p^l\psi_{[[k_2]]}\|_{L^1})\\
&\ls2^{3k_1+3k_2-\max\{j,j_1,j_2\}}(1+|x-y|+|x-z|)^{-5},
\end{split}
\end{equation*}
which yields
\begin{equation*}
\|K(y,z)\|_{L^1((\R^2)^2)}\ls2^{3k_1+3k_2-\max\{j,j_1,j_2\}}.
\end{equation*}
This, together with the H\"{o}lder inequality, the Bernstein inequality and \eqref{BA:d=2'} with $n=N$, leads to
\begin{equation*}
\begin{split}
\|I_{kk_1k_2}^{jj_1j_2}(t)\|_{L^2(\R^2)}&\ls\|K(y,z)\|_{L^1((\R^2)^2)}
\|P_{k_1}V_\mu\|_{L^\infty(\R^2)}\|P_{k_2}V_\nu\|_{L^2(\R^2)}\\
&\ls2^{4k_1+3k_2-\max\{j,j_1,j_2\}}\|P_{k_1}V_\mu\|_{L^2(\R^2)}\|P_{k_2}V_\nu\|_{L^2(\R^2)}\\
&\ls2^{k_1(4-N)+k_2(3-N)-\max\{j,j_1,j_2\}}\ve_2^2.
\end{split}
\end{equation*}
Therefore, we can obtain that for $\alpha\in(0,1/5)$ and $N\ge12$,
\begin{equation}\label{Znorm:bdry2}
\begin{split}
&\quad\;\Big\|2^{j\alpha}\sum_{\substack{j_1,j_2\ge-1,\\\max\{|j-j_1|,|j-j_2|\}\ge5}}
\sum_{(k_1,k_2)\in\cX_k}\|I_{kk_1k_2}^{jj_1j_2}(t)\|_{L^2(\R^2)}
\Big\|_{\ell^1_k\ell^2_j(j\ge\log_2t+10)}\\
&\ls\Big\|\ve_2^2\sum_{j_1,j_2\ge-1}\sum_{(k_1,k_2)\in\cX_k}
2^{k_2(3-N)+\max\{j,j_1,j_2\}(\alpha-1)}\Big\|_{\ell^1_k\ell^2_j}\ls\ve_2^2.
\end{split}
\end{equation}
It remains to deal with the case of $\max\{|j-j_1|,|j-j_2|\}\le4$ in \eqref{Znorm:bdry1}.
By \eqref{bilinear:e}, \eqref{BA:d=2'} with $n=10$ and \eqref{local:disp} with $n_1=0$, $n_2=N$, we can get that
\begin{equation}\label{Znorm:bdry3}
\begin{split}
2^{j\alpha}\|I_{kk_1k_2}^{jj_1j_2}(t)\|_{L_x^2}
&\ls 2^{j\alpha}\|B_{\Phi^{-1}_{\mu\nu}a_{\mu\nu}}
(e^{it\mu\Lambda}P_{[[k_1]]}Q_{j_1}P_{k_1}V_\mu,
e^{it\nu\Lambda}P_{[[k_2]]}Q_{j_2}P_{k_2}V_\nu)\|_{L_x^2}\\
&\ls2^{7k_1+k_2+j\alpha}\|e^{it\mu\Lambda}P_{[[k_1]]}Q_{j_1}P_{k_1}V_\mu\|_{L_x^\infty}
\|Q_{j_2}P_{k_2}V_\nu\|_{L_x^2}\\
&\ls2^{8k_1-9k_2+j\alpha-j_1\alpha-j_2\alpha(1-10/N)}\ve_2^2
\ls2^{-k_2-j\alpha(1-10/N)}\ve_2^2.
\end{split}
\end{equation}
This gives
\begin{equation}\label{Znorm:bdry4}
\begin{split}
&\quad\;\Big\|2^{j\alpha}\sum_{\substack{j_1,j_2\ge-1,\\\max\{|j-j_1|,|j-j_2|\}\le4}}
\sum_{(k_1,k_2)\in\cX_k}\|I_{kk_1k_2}^{jj_1j_2}(t)\|_{L_x^2}
\Big\|_{\ell^1_k\ell^2_j(j\ge\log_2t+10)}\\
&\ls\Big\|\ve_2^2\sum_{j_1,j_2\ge-1}\sum_{(k_1,k_2)\in\cX_k}
2^{-k_2-\alpha(1-10/N)\max\{j,j_1,j_2\}}\Big\|_{\ell^1_k\ell^2_j}\ls\ve_2^2.
\end{split}
\end{equation}

\vskip 0.2 true cm
\noindent\textbf{Case 2. $j\le\log_2t+10$ and $j_2\ge\log_2t$}
\vskip 0.1 true cm

Similarly to \eqref{Znorm:bdry3} with $n_2=1$ in \eqref{local:disp}, we have
\begin{equation*}
\begin{split}
2^{j\alpha}\|I_{kk_1k_2}^{jj_1j_2}(t)\|_{L_x^2}
&\ls2^{7k_1+k_2+j\alpha}\|e^{it\mu\Lambda}P_{[[k_1]]}Q_{j_1}P_{k_1}V_\mu\|_{L_x^\infty}
\|Q_{j_2}P_{k_2}V_\nu\|_{L_x^2}\\
&\ls\ve_2^22^{k_1(8+\alpha(1-1/N))-9k_2+j\alpha-j_1\alpha/N-j_2\alpha(1-10/N)}
t^{-\alpha(1-1/N)}\\
&\ls\ve_2^22^{-k_2(1-\alpha)-j_1\alpha/N-j_2\alpha(1-11/N)}.
\end{split}
\end{equation*}
This leads to
\begin{equation}\label{Znorm:bdry5}
\Big\|2^{j\alpha}\sum_{\substack{j_1,j_2\ge-1,\\j_2\ge\ln t}}
\sum_{(k_1,k_2)\in\cX_k}\|I_{kk_1k_2}^{jj_1j_2}(t)\|_{L_x^2}
\Big\|_{\ell^1_k\ell^2_j(j\le\log_2t+10)}\ls\ve_2^2.
\end{equation}

\vskip 0.2 true cm
\noindent\textbf{Case 3. $j\le\log_2t+10$, $j_2\le\log_2t$ and $j_1\le j_2$}
\vskip 0.1 true cm

Applying \eqref{disp:estimate} instead of \eqref{local:disp} in \eqref{Znorm:bdry3} gives
\begin{equation}\label{Znorm:bdry6}
\begin{split}
2^{j\alpha}\|I_{kk_1k_2}^{jj_1j_2}(t)\|_{L_x^2}
&\ls2^{7k_1+k_2+j\alpha}\|e^{it\mu\Lambda}P_{[[k_1]]}Q_{j_1}P_{k_1}V_\mu\|_{L_x^\infty}
\|Q_{j_2}P_{k_2}V_\nu\|_{L_x^2}\\
&\ls2^{9k_1+k_2+j\alpha}t^{-1}\|Q_{j_1}P_{k_1}V_\mu\|_{L_x^1}\|Q_{j_2}P_{k_2}V_\nu\|_{L_x^2}\\
&\ls2^{9k_1+k_2+j_1}t^{-1+\alpha}\|Q_{j_1}P_{k_1}V_\mu\|_{L_x^2}\|Q_{j_2}P_{k_2}V_\nu\|_{L_x^2}\\
&\ls\ve_2^22^{-k_2+j_1-\alpha(j_1+j_2)(1-\frac{11}{2N})}t^{-1+\alpha}\\
&\ls\ve_2^22^{-k_2+j_1(1-2\alpha+11\alpha/N)}t^{-1+\alpha}
\ls2^{-k_2-\alpha\max\{j,j_1,j_2\}(1-11/N)},
\end{split}
\end{equation}
where we have used \eqref{BA:d=2'} for $\|Q_{j_1}P_{k_1}V_\mu\|_{L_x^2}$ and $\|Q_{j_2}P_{k_2}V_\nu\|_{L_x^2}$ with $n=11/2$.
Therefore,
\begin{equation}\label{Znorm:bdry7}
\Big\|2^{j\alpha}\sum_{\substack{j_1,j_2\ge-1,\\j_1\le j_2\le\ln t}}
\sum_{(k_1,k_2)\in\cX_k}\|I_{kk_1k_2}^{jj_1j_2}(t)\|_{L_x^2}
\Big\|_{\ell^1_k\ell^2_j(j\le\log_2t+10)}\ls\ve_2^2.
\end{equation}

\vskip 0.2 true cm
\noindent\textbf{Case 4. $j\le\log_2t+10$, $j_2\le\log_2t$ and $j_1\ge j_2$}
\vskip 0.1 true cm

Changing the corresponding $L^\infty$ and $L^2$ norms in \eqref{Znorm:bdry6} to obtain
\begin{equation*}
\begin{split}
2^{j\alpha}\|I_{kk_1k_2}^{jj_1j_2}(t)\|_{L_x^2}
&\ls\ve_2^22^{-k_2+j_2-\alpha(j_1+j_2)(1-\frac{11}{2N})}t^{-1+\alpha}\\
&\ls\ve_2^22^{-k_2-\frac{j_1\alpha}{2N}+j_2(1-2\alpha+\alpha\frac{23}{2N})}t^{-1+\alpha}\\
&\ls\ve_2^22^{-k_2-\frac{j_1\alpha}{2N}-\alpha\max\{j,j_2\}(1-\frac{23}{2N})}.
\end{split}
\end{equation*}
This, together with $N\ge12$, ensures
\begin{equation}\label{Znorm:bdry8}
\Big\|2^{j\alpha}\sum_{\substack{j_1,j_2\ge-1,\\j_2\le\ln t,j_1\ge j_2}}
\sum_{(k_1,k_2)\in\cX_k}\|I_{kk_1k_2}^{jj_1j_2}(t)\|_{L_x^2}
\Big\|_{\ell^1_k\ell^2_j(j\le\log_2t+10)}\ls\ve_2^2.
\end{equation}
Finally, substituting \eqref{Znorm:bdry2}--\eqref{Znorm:bdry8} into \eqref{Znorm:bdry1} completes the proof of \eqref{Znorm:bdry}.
\end{proof}

\subsection{Weighted $L^2$ norm estimate of the nonlinearity}

At first, we will give another formulation of the second line in \eqref{profile:eqn2}.
Due to \eqref{bilinear:def} and \eqref{U:eqn}, we have
\begin{equation*}
\p_t\hat V_\sigma(t,\xi)=\sum_{\mu,\nu=\pm}\int_{\R^2}e^{-it\sigma\Lambda(\xi)}
a_{\mu\nu}(\xi-\eta,\eta)\hat U_\mu(t,\xi-\eta)\hat U_\nu(t,\eta)d\eta
+e^{-it\sigma\Lambda(\xi)}\hat\cC_1(t,\xi).
\end{equation*}
Then the second line of \eqref{profile:eqn1} can be reduced to
\begin{equation*}
\begin{split}
&\quad\; i\sum_{\mu,\nu=\pm}\int_1^t\iint_{\R^4}e^{-is\Lambda(\xi)}
(\Phi^{-1}_{\mu\nu}a_{\mu\nu})(\xi-\eta,\eta)\Big(
a_{\sigma\iota}(\xi-\eta-\zeta,\zeta)
\hat U_\sigma(\xi-\eta-\zeta)\hat U_\iota(\zeta)\hat U_\nu(\eta)\\
&\hspace{2cm}+\hat U_\mu(\xi-\eta)a_{\sigma\iota}(\eta-\zeta,\zeta)
\hat U_\sigma(\eta-\zeta)\hat U_\iota(\zeta)\Big)d\eta d\zeta ds
+\int_1^te^{-is\Lambda(\xi)}\hat\cC_2(s,\xi)ds\\
&=i\sum_{\mu,\sigma,\iota=\pm}\int_1^t\iint_{\R^4}e^{-is\Lambda(\xi)}
b_{\mu\sigma\iota}(\xi-\eta,\eta-\zeta,\zeta)\hat U_\mu(\xi-\eta)
\hat U_\sigma(\eta-\zeta)\hat U_\iota(\zeta)d\eta d\zeta ds
+\int_1^te^{-is\Lambda(\xi)}\hat\cC_2(s,\xi)ds,
\end{split}
\end{equation*}
where $\cC_2$ is at least quartic in $U$ and
\begin{equation}\label{symbol:b}
b=b_{\mu\sigma\iota}(\xi-\eta,\eta-\zeta,\zeta)=\sum_{\nu=\pm}
a_{\sigma\iota}(\eta-\zeta,\zeta)((\Phi^{-1}_{\mu\nu}a_{\mu\nu})(\xi-\eta,\eta)
+(\Phi_{\nu\mu}^{-1}a_{\nu\mu})(\xi,\xi-\eta)).
\end{equation}
Therefore, it concludes that
\begin{equation}\label{profile:eqn3}
\begin{split}
V(t,x)&=V(1,x)-i\sum_{\mu,\nu=\pm}e^{-is\Lambda}
B_{\Phi^{-1}_{\mu\nu}a_{\mu\nu}}(U_\mu,U_\nu)(s,x)\Big|_{s=1}^t\\
&\quad+i\sum_{\mu,\sigma,\iota=\pm}\int_1^te^{-is\Lambda}
\sT_b(U_\mu,U_\sigma,U_\iota)(s,x)ds+\int_1^te^{-is\Lambda}\cC_3ds,
\end{split}
\end{equation}
where the trilinear pseudoproduct operator $\sT_b(U_\mu,U_\sigma,U_\iota)$ is defined by \eqref{trilinear:def} and $\cC_3=\cC_1+\cC_2$.
Note that the estimate of $\cC_3$ is easier to be obtained than that for $\sT_b(U_\mu,U_\sigma,U_\iota)$, then we omit it here.
\begin{lemma}
Suppose that $U,V$ are defined by \eqref{U:def}, \eqref{profile:def} and the bootstrap assumption \eqref{BA:d=2} holds, then
\begin{equation}\label{Znorm}
\Big\|\w{x}^\alpha\int_1^te^{-is\Lambda}\sT_b(U_\mu,U_\sigma,U_\iota)(s)ds\Big\|_{L^2(\R^2)}
\ls\ve_2^3.
\end{equation}
\end{lemma}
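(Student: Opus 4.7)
The plan is to adapt the four-case decomposition used in Lemma \ref{lem:Znorm1} to the trilinear setting. Using the equivalence \eqref{Znorm:local}, it suffices to control
\begin{equation*}
\Big\|2^{j\alpha}\Big\|Q_jP_k\int_1^t e^{-is\Lambda}\sT_b(U_\mu,U_\sigma,U_\iota)(s)\,ds\Big\|_{L^2(\R^2)}\Big\|_{\ell^1_k\ell^2_j}.
\end{equation*}
Writing $V_\cdot=e^{\mp is\Lambda}U_\cdot$ and inserting the decomposition $U_{\mu}=\sum_{k_l,j_l}e^{is\mu\Lambda}P_{[[k_l]]}Q_{j_l}P_{k_l}V_\mu$ via \eqref{proj:proj} for each factor, one reduces to estimating, for each admissible tuple $(k,k_1,k_2,k_3,j,j_1,j_2,j_3)$, the trilinear piece obtained from the symbol $b$ in \eqref{symbol:b}, and then summing the $\ell^1_k\ell^2_j$-norm after integrating in $s\in[1,t]$.

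Second, I would split into exactly the same four regimes as in Lemma \ref{lem:Znorm1}, applied at each time slice $s$ (with $t$ replaced by $s$). In the outside-the-cone regime $j\ge\log_2 s+10$ with $\max_l|j-j_l|\ge 5$, I would write the trilinear operator as an oscillatory integral with phase
\begin{equation*}
\widetilde\Phi=\xi_1\cdot(x-y_1)+\xi_2\cdot(x-y_2)+\xi_3\cdot(x-y_3)+s\,\Phi^{(3)}_{\mu\sigma\iota}(\xi_1,\xi_2,\xi_3),
\end{equation*}
where $\Phi^{(3)}_{\mu\sigma\iota}(\xi_1,\xi_2,\xi_3)=-\Lambda(\xi_1+\xi_2+\xi_3)+\mu\Lambda(\xi_1)+\sigma\Lambda(\xi_2)+\iota\Lambda(\xi_3)$. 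Since derivatives of $\Phi^{(3)}$ are uniformly bounded while $|x-y_l|\gtrsim 2^{\max\{j,j_1,j_2,j_3\}}$, repeated integration by parts in the frequency variables (using the bounds on $b$ from \eqref{symbol:b}, \eqref{phi:deri:bound}, and Lemma \ref{lem:phase}) gives $|K|\ls 2^{c(k_1+k_2+k_3)}\max_l(1+|x-y_l|)^{-M}$, producing summable decay in $j,j_1,j_2,j_3$ after integration in $s\in[1,t]$ because $\max_l j_l\ge\log_2 s$ forces an extra $s^{-(M-2)}$ factor per spatial index.

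Third, in the inside-the-cone regime $j\le\log_2 s+10$, I would use the trilinear analogue of \eqref{bilinear:e} together with the localized Strichartz estimate \eqref{local:Stric} on two of the three inputs (in $L^2_s L^\infty_x$) and the localized $L^\infty_s L^2_x$ bound \eqref{BA:d=2'} on the remaining input carrying the spatial weight. Choosing the interpolation parameters $\beta_1,\beta_2,n_1,n_2,n$ in \eqref{local:disp}--\eqref{local:Stric} exactly as in Cases 2--4 of Lemma \ref{lem:Znorm1} (distinguishing by which of $j_1,j_2,j_3$ is largest relative to $\log_2 s$), and using $\alpha<1/5$, $N\ge12$, produces bounds of the form $\ve_2^3\cdot 2^{-k_{\max}}\cdot 2^{-\alpha\max\{j,j_l\}(1-c/N)}$ after time integration, which sum to $\ve_2^3$ in $\ell^1_k\ell^2_j$.

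The main obstacle I expect is the bookkeeping: the symbol $b$ loses a factor of the form $2^{\min_l k_l}$ from the $\Phi^{-1}_{\mu\nu}$ inside \eqref{symbol:b}, and this has to be absorbed by the output-frequency factor $2^{-k_{\max}}$ while still leaving enough Bernstein room to convert $L^\infty$ gains from \eqref{local:disp} into $L^2$ bounds weighted by $2^{j\alpha}$. This is the trilinear analogue of what forces the specific interpolation choices in Cases 2--4 and is where most of the work lies; once the parameters are tuned the sums close as in Lemma \ref{lem:Znorm1}. The quartic remainder $\cC_3$ picks up an extra factor of $U$ and hence extra time decay, so it is strictly easier and can be handled by the same scheme with one of the Strichartz inputs traded for another $L^\infty_s L^2_x$ estimate.
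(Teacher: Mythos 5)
Your proposal follows essentially the same route as the paper's proof: decompose via \eqref{Znorm:local} and \eqref{proj:proj} into spatially/frequency-localized trilinear pieces driven by the phase $\Psi_{\mu\sigma\iota}$, kill the region $j\gtrsim\log_2 s$ by non-stationary phase in $(\xi,\eta,\zeta)$ using the bounds on $b$ from \eqref{symbol:b} and Lemma \ref{lem:phase} (which yields the crucial $s^{-2}$ factor making the time integral converge), and in the inside-the-cone region apply the trilinear multiplier bound \eqref{trilinear} with the localized Strichartz estimate \eqref{local:Stric} on two inputs and \eqref{BA:d=2'} on the third, choosing interpolation parameters according to whether $\max\{j_1,j_2,j_3\}$ exceeds $\log_2 s$. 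The only minor deviations are bookkeeping details you flag yourself: the paper organizes this into three cases rather than four (Case 1 being subdivided by $\max_l|j-j_l|$), uses thresholds $\log_2 s+20$, and tunes $\beta_1,\beta_2,n$ to slightly different trilinear-specific values than those in Lemma \ref{lem:Znorm1}, but the mechanism is the one you describe.
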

\begin{proof}
Denote
\begin{equation*}
\begin{split}
\cY_k&=\cY_k^1\cup\cY_k^2,\\
\cY_k^1&=\{(k_1,k_2,k_3)\in\Z^3: |\max\{k_1,k_2,k_3\}-k|\le4,k_1,k_2,k_3\ge-1\},\\
\cY_k^2&=\{(k_1,k_2,k_3)\in\Z^3: \max\{k_1,k_2,k_3\}\ge k+4,
\max\{k_1,k_2,k_3\}-{\rm med}\{k_1,k_2,k_3\}\le4,k_1,k_2,k_3\ge-1\}.
\end{split}
\end{equation*}
As in \cite[page 799]{IP13}, if $P_k(P_{k_1}fP_{k_2}gP_{k_3}h)\neq0$, then $(k_1,k_2,k_3)\in\cY_k$.
Similarly to \eqref{Znorm:bdry1}, one has
\begin{equation}\label{Znorm1}
\begin{split}
&Q_jP_ke^{-is\Lambda}\sT_b(U_\mu,U_\sigma,U_\iota)=\sum_{j_1,j_2,j_3\ge-1}
\sum_{(k_1,k_2,k_3)\in\cY_k}I_{kk_1k_2k_3}^{jj_1j_2j_3},\\
&I_{kk_1k_2k_3}^{jj_1j_2j_3}:=Q_jP_ke^{-is\Lambda}\sT_b(e^{is\mu\Lambda}P_{[[k_1]]}Q_{j_1}P_{k_1}V_\mu,
e^{is\sigma\Lambda}P_{[[k_2]]}Q_{j_2}P_{k_2}V_\sigma,e^{is\iota\Lambda}P_{[[k_3]]}Q_{j_3}P_{k_3}V_\iota).
\end{split}
\end{equation}
Without loss of generality, we only deal with the case of $k_1\le k_2\le k_3$ in \eqref{Znorm1}.

\vskip 0.2 true cm
\noindent\textbf{Case 1. In the set $\cI_1:=\{j\ge\log_2s+20\}$}
\vskip 0.1 true cm

This case is similar to the Case 1 in Lemma \ref{lem:Znorm1}.
$I_{kk_1k_2k_3}^{jj_1j_2j_3}$ can be recast as
\begin{equation*}
\begin{split}
&I_{kk_1k_2k_3}^{jj_1j_2j_3}(s,x)=(2\pi)^{-6}\psi_j(x)\iiint_{(\R^2)^3}
K(x-x_1,x-x_2,x-x_3)Q_{j_1}P_{k_1}V_\mu(s,x_1)\\
&\hspace{5cm}\times Q_{j_2}P_{k_2}V_\sigma(s,x_2)
Q_{j_3}P_{k_3}V_\iota(s,x_3)dx_1dx_2dx_3,\\
\end{split}
\end{equation*}
where
\begin{equation*}
\begin{split}
&K(x-x_1,x-x_2,x-x_3):=\iiint_{(\R^2)^3}e^{i\tilde\Psi}b_{\mu\sigma\iota}(\xi,\eta,\zeta)
\psi_k(\xi+\eta+\zeta)\psi_{[[k_1]]}(\xi)\psi_{[[k_2]]}(\eta)
\psi_{[[k_3]]}(\zeta)d\xi d\eta d\zeta,\\
&\tilde\Psi:=s\Psi_{\mu\sigma\iota}(\xi,\eta,\zeta)
+\xi\cdot(x-x_1)+\eta\cdot(x-x_2)+\zeta\cdot(x-x_3),\\
&\Psi_{\mu\sigma\iota}(\xi,\eta,\zeta):=-\Lambda(\xi+\eta+\zeta)+\mu\Lambda(\xi)
+\sigma\Lambda(\eta)+\iota\Lambda(\zeta).
\end{split}
\end{equation*}
Denote
\begin{equation*}
\cL:=-i(|\p_\xi\tilde\Psi|^2+|\p_\eta\tilde\Psi|^2+|\p_\zeta\tilde\Psi|^2)^{-1}
\sum_{l=1}^2(\p_{\xi_l}\tilde\Psi\p_{\xi_l}+\p_{\eta_l}\tilde\Psi\p_{\eta_l}
+\p_{\zeta_l}\tilde\Psi\p_{\zeta_l}).
\end{equation*}
It follows from the method of stationary phase that
\begin{equation*}
\begin{split}
&\quad\;|K(x-x_1,x-x_2,x-x_3)|\\
&=\Big|\iiint_{(\R^2)^3}\cL^{10}(e^{i\tilde\Psi})b_{\mu\sigma\iota}(\xi,\eta,\zeta)
\psi_k(\xi+\eta+\zeta)\psi_{[[k_1]]}(\xi)\psi_{[[k_2]]}(\eta)
\psi_{[[k_3]]}(\zeta)d\xi d\eta d\zeta\Big|\\
&\ls2^{2k_1+2k_2+5k_3-\max\{j,j_1,j_2,j_3\}}(1+|x-x_1|+|x-x_2|+|x-x_3|)^{-7}s^{-2},
\end{split}
\end{equation*}
which yields
\begin{equation*}
\|K(x_1,x_2,x_3)\|_{L^1((\R^2)^3)}\ls2^{2k_1+2k_2+5k_3-\max\{j,j_1,j_2,j_3\}}s^{-2}.
\end{equation*}
It can be deduced from the H\"{o}lder inequality and \eqref{BA:d=2'} with $n=N$ that
\begin{equation*}
\begin{split}
\|I_{kk_1k_2k_3}^{jj_1j_2j_3}(s)\|_{L^2(\R^2)}&\ls\|K(x_1,x_2,x_3)\|_{L^1((\R^2)^3)}
\|P_{k_1}V\|_{L^\infty(\R^2)}\|P_{k_2}V\|_{L^\infty(\R^2)}\|P_{k_3}V\|_{L^2(\R^2)}\\
&\ls2^{3k_1+3k_2+5k_3-\max\{j,j_1,j_2,j_3\}}\|P_{k_1}V\|_{L^2(\R^2)}
\|P_{k_2}V\|_{L^2(\R^2)}\|P_{k_3}V\|_{L^2(\R^2)}\\
&\ls2^{(k_1+k_2)(3-N)+k_3(5-N)-\max\{j,j_1,j_2,j_3\}}s^{-2}\ve_2^3.
\end{split}
\end{equation*}
Therefore, for $\max\{|j-j_l|,l=1,2,3\}\ge5$, one has
\begin{equation}\label{Znorm2}
\begin{split}
&\quad\;\Big\|2^{j\alpha}\sum_{\substack{j_1,j_2,j_3\ge-1,\\
\max\{|j-j_l|,l=1,2,3\}\ge5}}
\sum_{(k_1,k_2,k_3)\in\cY_k}\|I_{kk_1k_2k_3}^{jj_1j_2j_3}(s)\|_{L^2}
\Big\|_{L^1([1,t])\ell^1_k\ell^2_j(j\ge\log_2s+20)}\\
&\ls\Big\|\sum_{j_1,j_2,j_3\ge-1}\sum_{(k_1,k_2,k_3)\in\cY_k}
2^{k_3(5-N)+\max\{j,j_1,j_2,j_3\}(\alpha-1)}s^{-2}\ve_2^3
\Big\|_{L^1([1,t])\ell^1_k\ell^2_j}\ls\ve_2^3.
\end{split}
\end{equation}
Next, we focus on the case of $\max\{|j-j_l|,l=1,2,3\}\le4$.
By \eqref{trilinear}, \eqref{BA:d=2'} with $n=12$ and \eqref{local:Stric} with $\beta_1=0,\beta_2=\alpha/N,n=0$, we arrive at
\begin{equation}\label{Znorm3}
\begin{split}
2^{j\alpha}\|\id_{\cI_1}(s)I_{kk_1k_2k_3}^{jj_1j_2j_3}(s)\|_{L^1([1,t])L_x^2}
&\ls2^{2(k_1+k_2)+5k_3+j\alpha}\|e^{is\mu\Lambda}P_{[[k_1]]}Q_{j_1}P_{k_1}V_\mu\|_{L^2([1,t])L_x^\infty}\\
&\qquad\times\|e^{is\sigma\Lambda}P_{[[k_2]]}Q_{j_2}P_{k_2}V_\sigma\|_{L^2([1,t])L_x^\infty}
\|Q_{j_3}P_{k_3}V\|_{L_t^\infty L_x^2}\\
&\ls2^{k_3(2\beta_2-1)+j\alpha-\alpha(j_1+j_2)(1-1/N)-j_3\alpha(1-12/N)}\ve_2^3\\
&\ls2^{-k_3(1-2\alpha/N)-2j\alpha(1-7/N)}\ve_2^3,
\end{split}
\end{equation}
where $\id_{\cI_1}(s)$ is defined by \eqref{charact:def}.
This, together with the Minkowski inequality, gives
\begin{equation}\label{Znorm4}
\begin{split}
&\quad\;\Big\|2^{j\alpha}\sum_{\substack{j_1,j_2,j_3\ge-1,\\
\max\{|j-j_l|,l=1,2,3\}\le4}}
\sum_{(k_1,k_2,k_3)\in\cY_k}\|I_{kk_1k_2k_3}^{jj_1j_2j_3}(s)\|_{L_x^2}
\Big\|_{L^1([1,t])\ell^1_k\ell^2_j(j\ge\log_2s+20)}\\
&\ls\Big\|2^{j\alpha}\sum_{\substack{j_1,j_2,j_3\ge-1,\\\max\{|j-j_l|,l=1,2,3\}\le4}}
\sum_{(k_1,k_2,k_3)\in\cY_k}\|\id_{\cI_1}(s)
I_{kk_1k_2k_3}^{jj_1j_2j_3}(s)\|_{L^1([1,t])L_x^2}\Big\|_{\ell^1_k\ell^2_j}\\
&\ls\Big\|\ve_2^3\sum_{j_1,j_2,j_3\ge-1}\sum_{(k_1,k_2,k_3)\in\cY_k}
2^{-k_3(1-2\alpha/N)-2\alpha(1-7/N)\max\{j,j_1,j_2,j_3\}}
\Big\|_{\ell^1_k\ell^2_j}\ls\ve_2^3.
\end{split}
\end{equation}

\vskip 0.2 true cm
\noindent\textbf{Case 2. In the set $\cI_2:=\{j\le\log_2s+20,\max\{j_1,j_2,j_3\}\le\log_2s\}$}
\vskip 0.1 true cm

It is convenient to assume $\max\{j_1,j_2,j_3\}=j_1$.
Similarly to \eqref{Znorm3}, applying \eqref{trilinear}, \eqref{BA:d=2'} with $n=0$ and \eqref{local:Stric} with $\beta_1=\alpha+\frac{\alpha}{5N}<\beta_2=\alpha+\frac{\alpha}{4N}<9\alpha/8<1$,
$n=0,n=23/2$, respectively, we can achieve that for $N\ge12$,
\begin{equation*}
\begin{split}
&\quad\;2^{j\alpha}\|\id_{\cI_2}(s)I_{kk_1k_2k_3}^{jj_1j_2j_3}(s)\|_{L^1([1,t])L_x^2}\\
&\ls2^{2(k_1+k_2)+5k_3+j\alpha-\beta_1\max\{j,j_1\}}
\|Q_{j_1}P_{k_1}V_\mu\|_{L_t^\infty L_x^2}\\
&\qquad\times\|s^{\beta_1/2}e^{is\sigma\Lambda}P_{[[k_2]]}Q_{j_2}P_{k_2}V_\sigma\|_{L^2([1,t])L_x^\infty}
\|s^{\beta_1/2}e^{is\iota\Lambda}Q_{j_3}P_{k_3}V_\iota\|_{L^2([1,t])L_x^\infty}\\
&\ls2^{k_3(2\beta_2-1/2)-\frac{\alpha}{5N}\max\{j,j_1\}-j_1\alpha+\beta_2(j_2+j_3)
-j_2\alpha-j_3\alpha(1-\frac{23}{2N})}\ve_2^3\\
&\ls2^{-k_3(1/2-9\alpha/4)-\frac{\alpha}{5N}\max\{j,j_1\}-j_1\alpha+12j_1\alpha/N}\ve_2^3
\ls2^{-k_3(1/2-9\alpha/4)-\frac{\alpha}{5N}\max\{j,j_1,j_2,j_3\}}\ve_2^3.
\end{split}
\end{equation*}
This, together with $\alpha\in(0,1/5)$, yields
\begin{equation}\label{Znorm5}
\Big\|2^{j\alpha}\sum_{j_1,j_2,j_3\ge-1}\sum_{(k_1,k_2,k_3)\in\cY_k}
\|\id_{\cI_2}(s)I_{kk_1k_2k_3}^{jj_1j_2j_3}(s)\|_{L_x^2}\Big\|_{L^1([1,t])\ell^1_k\ell^2_j}\\
\ls\ve_2^3.
\end{equation}

\vskip 0.2 true cm
\noindent\textbf{Case 3. In the set $\cI_3:=\{j\le\log_2s+20,\max\{j_1,j_2,j_3\}\ge\log_2s\}$}
\vskip 0.1 true cm

We can also assume $\max\{j_1,j_2,j_3\}=j_1$.
As in Case 2, choosing $\beta_1=\alpha(1-\frac{1}{4N})<\beta_2=\alpha(1-\frac{1}{5N})<1$ instead, we then have
\begin{equation*}
\begin{split}
2^{j\alpha}\|\id_{\cI_2}(s)I_{kk_1k_2k_3}^{jj_1j_2j_3}(s)\|_{L^1([1,t])L_x^2}
&\ls2^{k_3(2\beta_2-1/2)+j\alpha-\beta_1j-j_1\alpha+\beta_2(j_2+j_3)-j_2\alpha
-j_3\alpha(1-\frac{23}{2N})}\ve_2^3\\
&\ls2^{-k_3(1/2-9\alpha/4)-j_1\alpha(1-\frac{47}{4N})}\ve_2^3.
\end{split}
\end{equation*}
This implies
\begin{equation}\label{Znorm6}
\Big\|2^{j\alpha}\sum_{j_1,j_2,j_3\ge-1}\sum_{(k_1,k_2,k_3)\in\cY_k}
\|\id_{\cI_3}(s)I_{kk_1k_2k_3}^{jj_1j_2j_3}(s)\|_{L_x^2}\Big\|_{L^1([1,t])\ell^1_k\ell^2_j}\\
\ls\ve_2^3.
\end{equation}
Collecting \eqref{Znorm1}--\eqref{Znorm6} shows
\begin{equation*}
\big\|2^{j\alpha}\|Q_jP_ke^{-is\Lambda}\sT_b(U_\mu,U_\sigma,U_\iota)(s)\|_{L^2(\R^2)}
\big\|_{L^1([1,t])\ell^1_k\ell^2_j}\ls\ve_2^3.
\end{equation*}
This, together with \eqref{Znorm:local} and the Minkowski inequality, ensures
\begin{equation*}
\begin{split}
&\quad\;\Big\|\w{x}^\alpha\int_1^te^{-is\Lambda}\sT_b(U_\mu,U_\sigma,U_\iota)(s)ds\Big\|_{L^2(\R^2)}\\
&\ls\|\w{x}^\alpha e^{-is\Lambda}\sT_b(U_\mu,U_\sigma,U_\iota)(s)\|_{L^1([1,t])L^2(\R^2)}\\
&\ls\big\|2^{j\alpha}\|Q_jP_ke^{-is\Lambda}\sT_b(U_\mu,U_\sigma,U_\iota)(s)\|_{L^2(\R^2)}
\big\|_{L^1([1,t])\ell^1_k\ell^2_j}\\
&\ls\ve_2^3.
\end{split}
\end{equation*}
Therefore, the proof of \eqref{Znorm} is completed.
\end{proof}

\subsection{Proof of Theorem \ref{thm2}}

\begin{proof}[Proof of Theorem \ref{thm2}]
It is concluded from \eqref{initial:d=2}, Proposition \ref{prop:high:energy}, Lemmas \ref{lem:goodunknown} and \ref{lem:low:energy}, \eqref{BA:d=2} and the localized Strichartz estimate \eqref{local:Stric} with $\beta_1=0,\beta_2=\alpha/2N\le1/8,n=11$ that
\begin{equation*}
\begin{split}
\|U(t)\|_{H^{N}(\R^2)}&\ls\ve+\ve_2^2+\ve_2\Big(\sum_{k,j\ge-1}2^{k(9+1/8)}
\|e^{is\Lambda}P_{[k-1,k+1]}Q_jP_kP_kV(s)\|_{L^2([1,t])L^\infty(\R^2)}\Big)^2\\
&\ls\ve+\ve_2^2+\ve_2^3\sum_{k,j\ge-1}
2^{-3k/4-j\alpha(1-\frac{23}{2N})}\ls\ve+\ve_2^2+\ve_2^3.
\end{split}
\end{equation*}
This, together with \eqref{initial:d=2}, \eqref{Znorm:bdry}, \eqref{profile:eqn3} and \eqref{Znorm}, shows
that there is a constant $C_2\ge1$ such that
\begin{equation}\label{thm2:pf}
\|U(t)\|_{H^{N}(\R^2)}+\|\w{x}^\alpha V(t)\|_{L^2(\R^2)}
\le C_2(\ve+\ve_2^2+\ve_2^3).
\end{equation}
Let $\ve_0=\frac{1}{16C_2^2}$ and $\ve_2=4C_2\ve$. Then for any $t\in[1,\infty)$, \eqref{thm2:pf} is improved to
\begin{equation*}
\|U(t)\|_{H^{N}(\R^2)}+\|\w{x}^\alpha V(t)\|_{L^2(\R^2)}\le3\ve_2/4.
\end{equation*}
This, together with the local existence of classical solution to \eqref{KG}, yields that \eqref{KG}
admits a unique global solution $u\in C([0,\infty),H^{N+1}(\R^2))\cap C^1([0,\infty),H^{N}(\R^2))$.

Next, we derive the scattering of the solution \eqref{scatter}.
Denote
\begin{equation*}
\begin{split}
V^\infty(x)&:=V(1,x)+i\sum_{\mu,\nu=\pm}e^{-i\Lambda}
B_{\Phi^{-1}_{\mu\nu}a_{\mu\nu}}(U_\mu,U_\nu)(1,x)\\
&\qquad+\int_1^\infty e^{-is\Lambda}\{\cC_3+i\sum_{\mu,\sigma,\iota=\pm}
\sT_b(U_\mu,U_\sigma,U_\iota)\}ds
\end{split}
\end{equation*}
and $U^\infty(t)=e^{it\Lambda}V^\infty$.
By \eqref{Znorm:bdry} and \eqref{Znorm}, $V^\infty(x)$ is in $L^2(\R^2)$.
According to \eqref{profile:eqn3}, we obtain
\begin{equation}\label{scatter:pf1}
\begin{split}
&\|U(t)-U^\infty(t)\|_{L^2}=\|e^{it\Lambda}(V(t)-V^\infty)\|_{L^2(\R^2)}\\
&\ls\sum_{\mu,\nu=\pm}\|e^{-it\Lambda}B_{\Phi^{-1}_{\mu\nu}a_{\mu\nu}}
(U_\mu,U_\nu)(t,x)\|_{L^2(\R^2)}\\
&\quad+\int_t^\infty\{\|e^{-is\Lambda}\cC_3\|_{L^2(\R^2)}+\sum_{\mu,\sigma,\iota=\pm}
\|e^{-is\Lambda}\sT_b(U_\mu,U_\sigma,U_\iota)\|_{L^2(\R^2)}\}ds.
\end{split}
\end{equation}
From \eqref{Znorm}, one knows that the last line of \eqref{scatter:pf1} tends to zero as $t\rightarrow+\infty$.
Note that \eqref{local:disp} with $n_1=0$ and $n_2=1$ leads to
\begin{equation}\label{scatter:pf2}
\begin{split}
&\quad\;\|e^{-it\Lambda}B_{\Phi^{-1}_{\mu\nu}a_{\mu\nu}}(U_\mu,U_\nu)(t,x)\|_{L^2(\R^2)}\\
&\ls\Big\|\sum_{\substack{(k_1,k_2)\in\cX_k,\\k_1\le k_2}}
\sum_{j\ge-1}2^{7k_1+k_2}\|P_{k_2}U(t)\|_{L^2(\R^2)}
\|e^{it\Lambda}P_{[k_1-1,k_1+1]}Q_jP_{k_1}V(t)\|_{L^\infty(\R^2)}\Big\|_{\ell^2_k}\\
&\ls\ve_2^2t^{-\alpha(1-1/N)}.
\end{split}
\end{equation}
Define $u_0^\infty=\Im(\Lambda^{-1}e^{i\Lambda}V^\infty)$, $u_1^\infty=\Re(e^{i\Lambda}V^\infty)$ and $u^\infty$ is the solution to the linear Klein-Gordon equation with initial data $(u_0^\infty,u_1^\infty)$ at time $t=1$, then \eqref{scatter:pf1} and \eqref{scatter:pf2} imply \eqref{scatter}.
\end{proof}

\appendix
\setcounter{equation}{1}
\section{Estimates of multilinear Fourier multipliers}

\begin{lemma}\label{lem:bilinear}
Suppose that $\Phi_{\mu\nu}$ is defined by \eqref{phase:def} and $B_m(f,g)$ is defined by \eqref{bilinear:def} with two functions $f,g$ on $\R^d$.
For any $k_1,k_2\ge-1$ and $p,q,r\in[1,\infty]$ satisfying $1/p=1/q+1/r$, it holds that
\addtocounter{equation}{1}
\begin{align}
\|B_{m_\cS({\rm or}~m_{\cS_1})}(P_{k_1}f,P_{k_2}g)\|_{L^p(\R^d)}
&\ls2^{(2d+3)\min\{k_1,k_2\}}\|P_{k_1}f\|_{L^q(\R^d)}\|P_{k_2}g\|_{L^r(\R^d)},
\tag{\theequation a}\label{bilinear:a}\\
\|B_{a_{\mu\nu}}(P_{k_1}f,P_{k_2}g)\|_{L^p(\R^d)}
&\ls2^{k_2}\|P_{k_1}f\|_{L^q(\R^d)}\|P_{k_2}g\|_{L^r(\R^d)},
\tag{\theequation b}\label{bilinear:b}\\
\|B_{\Phi^{-1}_{\mu\nu}m_\cQ}(P_{k_1}f,P_{k_2}g)\|_{L^p(\R^d)}
&\ls2^{(2d+4)k_1+2Nk_2}\|P_{k_1}f\|_{L^q(\R^d)}\|P_{k_2}g\|_{L^r(\R^d)},
\tag{\theequation c}\label{bilinear:c}\\
\|B_{\Phi^{-1}_{\mu-}m_\cQ}(P_{k_1}f,P_{k_2}g)\|_{L^p(\R^d)}
&\ls2^{k_1+(2N-1)k_2}\|P_{k_1}f\|_{L^q(\R^d)}\|P_{k_2}g\|_{L^r(\R^d)},k_1\le k_2-6,
\tag{\theequation d}\label{bilinear:d}\\
\|B_{\Phi^{-1}_{\mu\nu}a_{\mu\nu}}(P_{k_1}f,P_{k_2}g)\|_{L^p(\R^d)}
&\ls2^{(2d+3)\min\{k_1,k_2\}+k_2}\|P_{k_1}f\|_{L^q(\R^d)}\|P_{k_2}g\|_{L^r(\R^d)},
\tag{\theequation e}\label{bilinear:e}
\end{align}
where one can see \eqref{sem:energy1} for $m_\cS$, \eqref{sem:energy8} for $m_{\cS_1}$, \eqref{symbol:a} for $a_{\mu\nu}$
and \eqref{qua:energy1} for $m_\cQ$.
\end{lemma}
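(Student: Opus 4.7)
The plan for all five estimates is uniform: represent each bilinear operator as a convolution, bound the convolution kernel in $L^1_{y,z}$, and apply Young's inequality to obtain the $L^p$ bound. Specifically, writing the Schwarz kernel
$$K(y,z) = \iint_{(\R^d)^2} e^{i(y\cdot\xi_1+z\cdot\xi_2)} m(\xi_1,\xi_2)\,\psi_{[k_1-1,k_1+1]}(\xi_1)\psi_{[k_2-1,k_2+1]}(\xi_2)\,d\xi_1 d\xi_2,$$
we obtain $B_m(P_{k_1}f,P_{k_2}g)(x) = C\iint K(x-y,x-z)P_{k_1}f(y)P_{k_2}g(z)\,dy\,dz$, and Young's inequality with $1/p=1/q+1/r$ reduces the matter to an $L^1$ bound on $K$. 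Integrating by parts in $y$ and $z$ to produce integrable $(1+|y|)^{-[d/2]-1}(1+|z|)^{-[d/2]-1}$ decay, then applying Cauchy--Schwarz and Plancherel, yields the quantitative estimate (which I would record first in the appendix as the prerequisite inequality \eqref{bilinear1})
$$\|K\|_{L^1((\R^d)^2)} \ls \sum_{n=1,2}\sum_{l\le [d/2]+1} 2^{lk_n}\bigl\|\psi_{k_n}(\xi_n)\,\p_{\xi_n}^l m(\xi_1,\xi_2)\bigr\|_{L^\infty}.$$

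With this reduction, each of \eqref{bilinear:a}--\eqref{bilinear:e} becomes a pointwise derivative estimate on the relevant symbol, for which Lemma \ref{lem:phase} is the main input. The bound \eqref{phi:lowerbound} gives $|\Phi^{-1}_{\mu\nu}|\ls 2^{\min(k_1,k_2)}$ on the relevant supports, while \eqref{phi:deri:bound} guarantees that differentiating $\Phi^{-1}$ never produces extra growth. Combining these with the explicit forms \eqref{symbol:a} of $a_{\mu\nu}$ (and noting that each generator in that list is homogeneous of order $0$ or $1$), with the definitions \eqref{sem:energy1} and \eqref{sem:energy8} of $m_\cS$ and $m_{\cS_1}$ (where the cutoff $\psi_{\le-10}$ forces comparable frequencies $|\xi_1|\sim|\xi_2|$), and with \eqref{qua:energy1} for $m_\cQ$ (whose factors $n_l\in S^{m_l}_{1,0}$ with $\sum m_l = 2N+1$ obey standard symbolic calculus), the bounds \eqref{bilinear:a}, \eqref{bilinear:b}, \eqref{bilinear:c}, \eqref{bilinear:e} follow by routinely tracking powers of $2^{k_1}$ and $2^{k_2}$ through the kernel estimate.

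The main obstacle is \eqref{bilinear:d}. Here $\nu=-$ and the assumption $k_1\le k_2-6$ place us in the non-resonant low-high regime, where a naive application of Lemma \ref{lem:phase} yields only $|\Phi^{-1}_{\mu-}|\ls 2^{k_1}$ -- enough to recover \eqref{bilinear:c} but not the sharper \eqref{bilinear:d}. To gain the additional factor needed to reduce the $k_2$ exponent from $2N$ to $2N-1$ and the $k_1$ exponent from $2d+4$ to $1$, I would exploit the non-resonance explicitly: for $|\xi_1|\ll|\xi_2|$ one has
$$\Phi_{\mu-}(\xi_1,\xi_2)=-\Lambda(\xi_1+\xi_2)+\mu\Lambda(\xi_1)-\Lambda(\xi_2)\approx-2\Lambda(\xi_2),$$
giving the sharper pointwise bound $|\Phi^{-1}_{\mu-}|\ls 2^{-k_2}$, with derivatives obeying the same bound via Leibniz. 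This improved non-resonant estimate must then be combined with the commutator structure built into $m_\cQ$: the bracket $[n_4(\xi_1+\xi_2)n_5(\xi_2)-n_4((\xi_1+2\xi_2)/2)n_5((\xi_1+2\xi_2)/2)]$ is a first-order difference in $\xi_1$ about $(\xi_1+2\xi_2)/2$ and therefore carries an extra factor of $2^{k_1}$. Feeding both gains into the kernel estimate above yields \eqref{bilinear:d}, completing the lemma.
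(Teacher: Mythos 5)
Your overall strategy — Schwarz kernel, $L^1$ kernel bound via weighted $L^2$/Plancherel, symbol derivative estimates, then Young's inequality — is exactly the paper's route: it matches the preparatory inequality \eqref{bilinear1} and the symbol derivative estimates \eqref{bilinear2}--\eqref{bilinear5} in the proof of Lemma~\ref{lem:bilinear}. You have also correctly isolated the key mechanism for \eqref{bilinear:d}: the sharper non-resonance bound $|\Phi^{-1}_{\mu-}|\ls 2^{-k_2}$ when $k_1\le k_2-6$ (from $-\Phi_{\mu-}\ge\Lambda(\xi_1+\xi_2)\gtrsim 2^{k_2}$), together with the commutator structure in the bracket of $m_\cQ$ producing an extra factor $\xi_1$. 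Both appear as \eqref{bilinear4} and the final displayed identity in the paper's proof.

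There is, however, a genuine gap in your account of \eqref{bilinear:c}. You present the commutator gain as an ingredient specific to \eqref{bilinear:d} and claim \eqref{bilinear:c} follows "by routinely tracking powers" from the orders $n_l\in S^{m_l}_{1,0}$ with $\sum m_l=2N+1$. That naive count gives $|m_\cQ|\ls 2^{(2N+1)k_2}$ (recall the cutoff $\psi_{\le-10}$ in $m_\cQ$ forces $|\xi_1|\ll|\xi_2|$), and hence $\ls 2^{(2d+3)k_1+(2N+1)k_2}$ after combining with $\Phi^{-1}_{\mu\nu}$ — an extra $2^{k_2-k_1}$ worse than the claimed $2^{(2d+4)k_1+2Nk_2}$. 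The commutator gain — precisely the paper's \eqref{bilinear3}, $\sum_l\bigl(2^{lk_1}|\p_\xi^l m_\cQ|+2^{lk_2}|\p_\eta^l m_\cQ|\bigr)\ls 2^{2Nk_2+k_1}$, proven by writing the bracket as a first-order difference — is already needed for \eqref{bilinear:c}, not just \eqref{bilinear:d}. And this matters downstream: in \eqref{qua:energy3} the factor $2^{2Nk_2}$ with $k_2\approx k$ is exactly what the two $H^N$-norms can absorb; a loss of a single extra power $2^{k_2}$ would not close. So \eqref{bilinear:c} rests on the same commutator observation you reserved for \eqref{bilinear:d}, and you should make that explicit.

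A small further point: your kernel estimate sums over derivatives only up to order $[d/2]+1$, whereas the paper uses the joint weight $(1+|y|+|z|}^{d+1}$ and goes up to order $d+1$. The structure is the same, but the specific exponents $2d+3$, $2d+4$, etc.\ in the lemma statement track the paper's choice, so you would need to redo the bookkeeping (or accept slightly different constants) to match the stated bounds exactly.
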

\begin{proof}
For convenience, we only deal with the case of $k_1\le k_2$ since the case of $k_1\ge k_2$ can be treated analogously.
According to the definition of the bilinear pseudoproduct operator \eqref{bilinear:def}, we have
\begin{equation*}
\begin{split}
&B_m(P_{k_1}f,P_{k_2}g)(x)=(2\pi)^{-2d}\iint_{(\R^d)^2}\cK(x-y,x-z)P_{k_1}f(y)P_{k_2}g(z)dydz,\\
&\cK(y,z)=\iint_{(\R^d)^2}e^{i(y\cdot\xi+z\cdot\eta)}m(\xi,\eta)
\psi_{[[k_1]]}(\xi)\psi_{[[k_2]]}(\eta)d\xi d\eta.
\end{split}
\end{equation*}
As in Lemma 3.3 of \cite{DIP17}, the $L^1$ norm of the Schwartz kernel $\cK(y,z)$ can be bounded by
\begin{equation}\label{bilinear1}
\begin{split}
\|\cK(y,z)\|_{L^1((\R^d)^2)}&\ls\|(1+|y|+|z|)^{d+1}\cK(y,z)\|_{L^2((\R^d)^2)}\\
&\ls\sum_{l=0}^{d+1}(2^{lk_1}\|\psi_{[[k_1]]}(\xi)\p_\xi^lm(\xi,\eta)\|_{L^\infty}
+2^{lk_2}\|\psi_{[[k_2]]}(\eta)\p_\eta^lm(\xi,\eta)\|_{L^\infty}).
\end{split}
\end{equation}
Inspired by Lemma 4.5 in \cite{Zheng19}, we will show that
\begin{equation}\label{bilinear2}
\sum_{l=0}^{d+1}(2^{lk_1}|\psi_{[[k_1]]}(\xi)\p^l_\xi\Phi^{-1}_{\mu\nu}(\xi,\eta)|
+2^{lk_2}|\psi_{[[k_2]]}(\eta)\p^l_\eta\Phi^{-1}_{\mu\nu}(\xi,\eta)|)\ls2^{(2d+3)k_1}
\end{equation}
and
\begin{equation}\label{bilinear3}
\sum_{l=0}^{d+1}(2^{lk_1}|\psi_{[[k_1]]}(\xi)\p^l_\xi m_\cQ(\xi,\eta)|
+2^{lk_2}|\psi_{[[k_2]]}(\eta)\p^l_\eta m_\cQ(\xi,\eta)|)\ls2^{2Nk_2+k_1}.
\end{equation}
Furthermore, if $k_1\le k_2-6$, one has
\begin{equation}\label{bilinear4}
\sum_{l=0}^{d+1}(2^{lk_1}|\psi_{[[k_1]]}(\xi)\p^l_\xi\Phi_{\mu-}^{-1}(\xi,\eta)|
+2^{lk_2}|\psi_{[[k_2]]}(\eta)\p^l_\eta\Phi_{\mu-}^{-1}(\xi,\eta)|)\ls2^{-k_2}.
\end{equation}
Meanwhile, \eqref{symbol:a} implies
\begin{equation}\label{bilinear5}
\sum_{l=0}^{d+1}(2^{lk_1}|\psi_{[[k_1]]}(\xi)\p_\xi^la_{\mu\nu}(\xi,\eta)|
+2^{lk_2}|\psi_{[[k_2]]}(\eta)\p_\eta^la_{\mu\nu}(\xi,\eta)|)\ls2^{k_2}.
\end{equation}
In fact, if \eqref{bilinear2}--\eqref{bilinear5} have been proved, then these together
with \eqref{bilinear1} and the H\"{o}lder inequality yield \eqref{bilinear:a}--\eqref{bilinear:e}.

The estimate on the first term of \eqref{bilinear2} follows from $|\p_\xi^l\Phi^{-1}_{\mu\nu}(\xi+\eta,\eta)|\ls|\Phi^{-1}_{\mu\nu}(\xi+\eta,\eta)|\ls2^{k_1}$
and direct computation. In addition,
the second term in \eqref{bilinear2} can be easily treated for the case of $k_1\ge k_2-5$.

We next treat the second term in \eqref{bilinear2} for $k_1\le k_2-6$ and $k_2\ge0$.

For $\p_\eta^l\Phi_{\mu+}$, direct computation yields
\begin{equation}\label{bilinear6}
|\p_\eta^l\Phi_{\mu+}(\xi,\eta)|=|\p_\eta^l\Lambda(\xi+\eta)-\p_\eta^l\Lambda(\eta)|
\le\int_0^1|\xi\p^{1+l}\Lambda(s\xi+\eta)|ds\ls|\xi|(1+|\eta|)^{-l},
\end{equation}
which derives $|\eta|^l|\p_\eta^l\Phi_{\mu+}(\xi,\eta)|\ls|\xi|$.
By \eqref{phi:lowerbound}, \eqref{phi:deri:bound} and Leibniz's rules, we have
\begin{equation*}
|\eta|^l|\p_\eta^l\Phi_{\mu+}^{-1}(\xi,\eta)|\ls|\xi|^{2l+1},\quad l=0,1,\cdots,d+1.
\end{equation*}
This leads to \eqref{bilinear2} for $\nu=+$.

For $\p_\eta^l\Phi_{\mu-}$, according to the definition \eqref{phase:def}, it is known that
there is a positive constant $C>0$ such that
\begin{equation*}
-\Phi_{\mu-}(\xi,\eta)=\Lambda(\xi+\eta)-\mu\Lambda(\xi)+\Lambda(\eta)
\ge\Lambda(\xi+\eta)\ge C2^{k_2}.
\end{equation*}
When $l\ge1$, we obtain
$|\p_\eta^l\Phi_{\mu-}(\xi,\eta)|=|\p_\eta^l(\Lambda(\xi+\eta)+\Lambda(\eta))|
\le|\eta|^{1-l}$.
Analogously, for $l=0,1,\cdots,d+1$, one has
$|\eta|^l|\p_\eta^l\Phi_{\mu-}^{-1}(\xi,\eta)|\ls2^{-k_2}$,
which implies \eqref{bilinear2} for $\nu=-$ and \eqref{bilinear4}.

At last, similarly to \eqref{bilinear6}, we can achieve
\begin{equation*}
\begin{split}
&n_4(\xi_1+\xi_2)n_5(\xi_2)-n_4(\frac{\xi_1+2\xi_2}{2})n_5(\frac{\xi_1+2\xi_2}{2})\\
&=\int_0^1\frac{d}{d\theta}\Big[n_4(\xi_1+\xi_2-\theta\frac{\xi_1}{2})
n_5(\xi_2+\theta\frac{\xi_1}{2})\Big]d\theta\\
&=\frac{\xi_1}{2}\int_0^1(-(\nabla n_4)n_5+n_4(\nabla n_5))d\theta,
\end{split}
\end{equation*}
which yields \eqref{bilinear3}.
\end{proof}

\begin{lemma}\label{lem:trilinear}
Suppose that $\sT_b(f,g,h)$ is defined by \eqref{trilinear:def} with three functions $f,g,h$ on $\R^2$. For any $k_1,k_2,k_3\ge-1$ and $p,q_1,q_2,q_3\in[1,\infty]$ satisfying $1/p=1/q_1+1/q_2+1/q_3$, it holds that
\begin{equation}\label{trilinear}
\|\sT_b(P_{k_1}f,P_{k_2}g,P_{k_3}h)\|_{L^p(\R^2)}\ls2^{3\max\{k_1,k_2,k_3\}+2(k_1+k_2+k_3)}
\|P_{k_1}f\|_{L^{q_1}}\|P_{k_2}g\|_{L^{q_2}}\|P_{k_3}h\|_{L^{q_3}},
\end{equation}
where $b$ is defined by \eqref{symbol:b}.
\end{lemma}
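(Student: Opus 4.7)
The plan is to follow the template of Lemma~\ref{lem:bilinear} (the bilinear case) and express $\sT_b(P_{k_1}f, P_{k_2}g, P_{k_3}h)$ via its Schwartz kernel on $(\R^2)^3$, estimate the $L^1$-norm of that kernel, and then invoke Young's inequality to obtain the bound in $L^p$. Concretely, I write
\begin{equation*}
\sT_b(P_{k_1}f, P_{k_2}g, P_{k_3}h)(x)
= (2\pi)^{-6}\iiint_{(\R^2)^3}
\cK(x-y_1, x-y_2, x-y_3)\prod_{i=1}^3 P_{k_i}f_i(y_i)\,dy_i,
\end{equation*}
with $\cK(y_1,y_2,y_3) = \iiint e^{i(y_1\cdot\xi_1+y_2\cdot\xi_2+y_3\cdot\xi_3)} b(\xi_1,\xi_2,\xi_3)\prod_i\psi_{[[k_i]]}(\xi_i)\,d\xi$, where $[[k]]=[k-1,k+1]$. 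By H\"older, $\|\sT_b(P_{k_1}f,P_{k_2}g,P_{k_3}h)\|_{L^p}\ls \|\cK\|_{L^1((\R^2)^3)} \prod_i \|P_{k_i}f_i\|_{L^{q_i}}$, so the task reduces to estimating $\|\cK\|_{L^1}$.

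For the kernel bound I intend to mimic \eqref{bilinear1}: Cauchy–Schwarz against the weight $(1+|y_1|+|y_2|+|y_3|)^{[3d/2]+1}$ in $\R^6$ (which for $d=2$ means exponent $4$), followed by Plancherel, yields a bound of the form
\begin{equation*}
\|\cK\|_{L^1((\R^2)^3)} \ls \sum_{l=0}^{[3d/2]+1}\sum_{i=1}^3
2^{l k_i}\,\|\psi_{[[k_i]]}(\xi_i)\,\partial_{\xi_i}^l b(\xi_1,\xi_2,\xi_3)\|_{L^\infty}
\end{equation*}
(exactly the same device already used in \eqref{qua:energy10}).  What remains is a pointwise bound on $b$ and its derivatives up to order $4$.

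For the derivative bound I use the explicit factorization \eqref{symbol:b}, $b=a_{\sigma\iota}(\xi_2,\xi_3)\cdot\bigl[(\Phi^{-1}_{\mu\nu}a_{\mu\nu})(\xi_1,\xi_2+\xi_3)+(\Phi^{-1}_{\nu\mu}a_{\nu\mu})(\xi_1+\xi_2+\xi_3,\xi_1)\bigr]$, and apply Leibniz. The factor $a_{\sigma\iota}(\xi_2,\xi_3)$ together with all its derivatives is controlled by \eqref{bilinear5}, giving $\sum_{l}(2^{lk_2}+2^{lk_3})|\partial^l a_{\sigma\iota}|\ls 2^{\max\{k_2,k_3\}}$. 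For each bracketed factor, the bilinear bounds \eqref{bilinear2}--\eqref{bilinear5} applied to the pair of frequencies (keeping in mind that on the support, $|\xi_2+\xi_3|\ls 2^{\max\{k_2,k_3\}}$ and $|\xi_1+\xi_2+\xi_3|\ls 2^{\max\{k_1,k_2,k_3\}}$, with the associated gains when we differentiate) deliver $\ls 2^{(2d+3)k_1+\max\{k_1,k_2,k_3\}}$ when $k_1=\min\{k_1,k_2,k_3\}$, and symmetric bounds otherwise. Combining the two factors, summing over the relevant derivatives, and using that in the worst ordering $k_1\le k_2\le k_3$ one has $(2d+3)k_1+\max\{k_2,k_3\}+\max\{k_1,k_2,k_3\}\le 3\max\{k_1,k_2,k_3\}+2(k_1+k_2+k_3)$, yields the desired $\|\cK\|_{L^1}\ls 2^{3\max\{k_1,k_2,k_3\}+2(k_1+k_2+k_3)}$ and hence \eqref{trilinear}.

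The main obstacle will be the derivative computation for the factor $(\Phi^{-1}_{\mu\nu}a_{\mu\nu})(\xi_1,\xi_2+\xi_3)$: differentiating in $\xi_2$ or $\xi_3$ amounts to differentiating in the \emph{composite} second argument, and Leibniz crosses over with the $a_{\sigma\iota}(\xi_2,\xi_3)$ factor, so one must verify that each mixed derivative still falls into the regime covered by Lemma~\ref{lem:bilinear} (i.e., the two frequencies feeding into $\Phi_{\mu\nu}^{-1}$ remain localized at scales $2^{k_1}$ and $\ls 2^{\max\{k_2,k_3\}}$ after any number of differentiations). Once this bookkeeping is done, the estimate \eqref{trilinear} is a direct assembly of the bilinear inputs plus Young's inequality; the exponent $3\max+2(k_1+k_2+k_3)$ is rather slack and will absorb all the case distinctions without any further refinement.
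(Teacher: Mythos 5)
Your overall strategy — Schwartz‐kernel representation, an $L^1$ bound on the kernel, then Young/H\"older — is exactly the paper's structure. The difference is in how the kernel $L^1$-norm is estimated and in the symbol bookkeeping, and there I think the proposal has a real (if subtle) arithmetic gap.

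The paper does \emph{not} use the Cauchy--Schwarz $+$ Plancherel device of \eqref{bilinear1}/\eqref{qua:energy10} here. Instead it integrates by parts seven times in the frequency variables (the ``stationary phase'' step) to get the \emph{pointwise} bound
\[
(1+|x_1|+|x_2|+|x_3|)^7\,|\cK(x_1,x_2,x_3)|
\ls 2^{3\max\{k_1,k_2,k_3\}}\prod_{n=1}^3\sum_{l\le7}\|\p^l\psi_{[[k_n]]}\|_{L^1(\R^2)},
\]
then integrates the $(1+\sum|x_i|)^{-7}$ weight over $\R^6$. The two exponents in \eqref{trilinear} have a clean origin in this accounting: the factor $2^{3\max}$ is a uniform bound on $b$ \emph{and all its $\xi$-derivatives} (since $|\p^l\Phi^{-1}_{\mu\nu}|\ls|\Phi^{-1}_{\mu\nu}|$ by \eqref{phi:deri:bound}, and $\p^l$ applied to the degree-$\le1$ symbols $a_{\mu\nu},a_{\sigma\iota}$ does not grow), with \emph{no} $2^{lk_n}$ weights; and the factor $2^{2(k_1+k_2+k_3)}$ is exactly $\prod_n\|\psi_{[[k_n]]}\|_{L^1(\R^2)}$.

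Your bookkeeping, by contrast, reuses the weighted derivative sums $\sum_l 2^{lk_i}|\p^l(\cdot)|$ from \eqref{bilinear2}--\eqref{bilinear5}, arriving at a bound of the shape $2^{(2d+3)k_1+2\max}$. That quantity is indeed $\le 2^{3\max+2\sum k_i}$, so at first glance the arithmetic closes — but only because you have suppressed the measure-of-support factor that the Cauchy--Schwarz/Plancherel step produces. Concretely, after Cauchy--Schwarz against $(1+\sum|y_i|)^{4}$ and Plancherel, one estimates $\|\p^{\le4}(b\prod_n\psi_{[[k_n]]})\|_{L^2((\R^2)^3)}$, and bounding the integrand in $L^\infty$ forces a factor $\prod_n\|\psi_{[[k_n]]}\|_{L^2(\R^2)}\approx 2^{k_1+k_2+k_3}$. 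With that factor restored, $2^{(2d+3)k_1+2\max}\cdot 2^{\sum k_i}$ already exceeds the target when $k_1=k_2=k_3=k$: you get $2^{12k}$ against the required $2^{9k}$. The culprit is the $2^{(2d+3)k_1}$ coefficient from \eqref{bilinear2}: it was designed for the $2^{lk}$-weighted accounting, and in this route those weights and the volume factor double-count. The fix is to drop the $2^{lk_i}$ weights altogether and observe, as above, that $\sup_{|\alpha|\le 7}|\p^\alpha_\xi b|\ls 2^{3\max}$; then whether you finish by the pointwise IBP bound and $\|\p^l\psi_{[[k_n]]}\|_{L^1}\ls 2^{2k_n}$ (the paper's way) or by Cauchy--Schwarz/Plancherel and $\|\psi_{[[k_n]]}\|_{L^2}\ls 2^{k_n}$ (which even gives the stronger $2^{3\max+\sum}$), the estimate \eqref{trilinear} follows.
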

\begin{proof}
According to the definition of the trilinear pseudoproduct operator \eqref{trilinear:def}, one has
\begin{equation*}
\begin{split}
&\sT_b(P_{k_1}f,P_{k_2}g,P_{k_3}h)(x)=(2\pi)^{-6}\iiint_{(\R^2)^3}\cK(x-x_1,x-x_2,x-x_3)\\
&\hspace{5cm}\times P_{k_1}f(x_1)P_{k_2}g(x_2)P_{k_3}h(x_3)dx_1dx_2dx_3,\\
&\cK(x_1,x_2,x_3)=\iiint_{(\R^2)^3}e^{i(x_1\cdot\xi+x_2\cdot\eta+x_3\cdot\zeta)}
b(\xi,\eta,\zeta)\psi_{[[k_1]]}(\xi)\psi_{[[k_2]]}(\eta)\psi_{[[k_3]]}(\zeta)
d\xi d\eta d\zeta.
\end{split}
\end{equation*}
It follows from the method of stationary phase and \eqref{phi:deri:bound}, \eqref{symbol:b} that
\begin{equation*}
\begin{split}
(1+|x_1|+|x_2|+|x_3|)^{7}|\cK(x_1,x_2,x_3)|
&\ls2^{3\max\{k_1,k_2,k_3\}}\prod_{n=1}^3\sum_{l=0}^{7}\|\p^l\psi_{[[k_n]]}\|_{L^1(\R^2)}\\
&\ls2^{3\max\{k_1,k_2,k_3\}+2(k_1+k_2+k_3)},
\end{split}
\end{equation*}
which implies
\begin{equation*}
\|\cK(x_1,x_2,x_3)\|_{L^1((\R^2)^3)}\ls2^{3\max\{k_1,k_2,k_3\}+2(k_1+k_2+k_3)}.
\end{equation*}
This, together with the H\"{o}lder inequality, leads to \eqref{trilinear}.
\end{proof}

\section{Reformulation of the good unknown}\label{section:B}

\begin{proof}[Proof of \eqref{cU:eqn}]
At first, direct computation yields
\begin{equation}\label{B1}
\begin{split}
(\p_t-iT_{Q^{0j}\zeta_j})^2u&=(\p_t-iT_{Q^{0j}\zeta_j})(\p_t-iT_{Q^{0l}\zeta_l})u\\
&=\p_t^2u-iT_{\p_tQ^{0j}\zeta_j}u-2iT_{Q^{0j}\zeta_j}\p_tu
-T_{Q^{0j}\zeta_j}T_{Q^{0l}\zeta_l}u.
\end{split}
\end{equation}
By the definitions \eqref{error:def} and \eqref{good:unknown}, we have that
\begin{equation*}
\begin{split}
&(\p_t-iT_{Q^{0j}\zeta_j+\sqrt{1+q}\Lambda(\zeta)})\cU
=(\p_t-iT_{Q^{0j}\zeta_j}-iT_{\sqrt{1+q}\Lambda(\zeta)})
(\p_tu-iT_{Q^{0l}\zeta_l}u+iT_{\sqrt{1+q}}\Lambda u)\\
&=(\p_t-iT_{Q^{0j}\zeta_j})^2u+i(\p_t-iT_{Q^{0j}\zeta_j})T_{\sqrt{1+q}}\Lambda u
-iT_{\sqrt{1+q}\Lambda(\zeta)}(\p_tu-iT_{Q^{0j}\zeta_j}u)
+T_{\sqrt{1+q}\Lambda(\zeta)}T_{\sqrt{1+q}}\Lambda u\\
&=(\p_t-iT_{Q^{0j}\zeta_j})^2u+iT_{\p_t\sqrt{1+q}}\Lambda u
+iT_{\sqrt{1+q}}\p_t\Lambda u+T_{Q^{0j}\zeta_j}T_{\sqrt{1+q}}\Lambda u
-iT_{\sqrt{1+q}\Lambda(\zeta)}\p_tu\\
&\quad-T_{\sqrt{1+q}\Lambda(\zeta)}T_{Q^{0j}\zeta_j}u
+E(\sqrt{1+q}\Lambda(\zeta),\sqrt{1+q})\Lambda u+T_{(1+q)\Lambda(\zeta)}\Lambda u.
\end{split}
\end{equation*}
Note that Lemma \ref{lem:para} (ii) leads to
\begin{equation*}
\begin{split}
T_{\sqrt{1+q}}\p_t\Lambda u-T_{\sqrt{1+q}\Lambda(\zeta)}\p_tu
&=E(\sqrt{1+q},\Lambda(\zeta))\p_tu,\\
T_{Q^{0j}\zeta_j}T_{\sqrt{1+q}}\Lambda u
-T_{\sqrt{1+q}\Lambda(\zeta)}T_{Q^{0j}\zeta_j}u
&=E(Q^{0j}\zeta_j,\sqrt{1+q})\Lambda u
-E(\sqrt{1+q}\Lambda(\zeta),Q^{0j}\zeta_j,\Lambda^{-1}(\zeta))\Lambda u.
\end{split}
\end{equation*}
This, together with \eqref{B1} implies
\begin{equation*}
\begin{split}
&\;\quad(\p_t-iT_{Q^{0j}\zeta_j+\sqrt{1+q}\Lambda(\zeta)})\cU\\
&=\p_t^2u-iT_{\p_tQ^{0j}\zeta_j}u-2iT_{Q^{0j}\zeta_j}\p_tu
-T_{Q^{0j}\zeta_j}T_{Q^{0l}\zeta_l}u+iT_{\p_t\sqrt{1+q}}\Lambda u\\
&\quad+iE(\sqrt{1+q},\Lambda(\zeta))\p_tu+E(Q^{0j}\zeta_j,\sqrt{1+q})\Lambda u
-E(\sqrt{1+q}\Lambda(\zeta),Q^{0j}\zeta_j,\Lambda^{-1}(\zeta))\Lambda u\\
&\quad+E(\sqrt{1+q}\Lambda(\zeta),\sqrt{1+q})\Lambda u
+\Lambda^2u+T_{(Q^{jl}+Q^{0j}Q^{0l})\zeta_j\zeta_l\Lambda^{-1}(\zeta)}\Lambda u.
\end{split}
\end{equation*}
In addition, according to \eqref{nonlinear} and \eqref{remainder:def}, we have
\begin{equation}\label{B2}
\begin{split}
\p_t^2u+\Lambda^2u&=S(u,\p u)+2H(Q^{0j},\p^2_{tj}u)+2T_{\p^2_{tj}u}Q^{0j}
+2iT_{Q^{0j}}T_{\zeta_j}\p_tu\\
&\quad+H(Q^{jl},\p^2_{jl}u)+T_{\p^2_{jl}u}Q^{jl}
-T_{Q^{jl}}T_{\zeta_j\zeta_l\Lambda^{-1}(\zeta)}\Lambda u.
\end{split}
\end{equation}
Then
\begin{equation}\label{B3}
\begin{split}
&(\p_t-iT_{Q^{0j}\zeta_j+\sqrt{1+q}\Lambda(\zeta)})\cU
=S(u,\p u)+2H(Q^{0j},\p^2_{tj}u)+H(Q^{jl},\p^2_{jl}u)+2T_{\p^2_{tj}u}Q^{0j}\\
&\quad+T_{\p^2_{jl}u}Q^{jl}-iT_{\p_tQ^{0j}\zeta_j}u+iT_{\p_t\sqrt{1+q}}\Lambda u
+2iE(Q^{0j},\zeta_j)\p_tu-E(Q^{jl},\zeta_j\zeta_l\Lambda^{-1}(\zeta))\Lambda u\\
&\quad-E(Q^{0j}\zeta_j,Q^{0l}\zeta_l,\Lambda^{-1}(\zeta))\Lambda u
+iE(\sqrt{1+q},\Lambda(\zeta))\p_tu+E(Q^{0j}\zeta_j,\sqrt{1+q})\Lambda u\\
&\quad-E(\sqrt{1+q}\Lambda(\zeta),Q^{0j}\zeta_j,\Lambda^{-1}(\zeta))\Lambda u
+E(\sqrt{1+q}\Lambda(\zeta),\sqrt{1+q})\Lambda u.
\end{split}
\end{equation}
For the terms $T_{\p_tQ^{0j}\zeta_j}u$ and $T_{\p_t\sqrt{1+q}}\Lambda u$
in the second line of \eqref{B3},
due to $\p_t^2u=\Delta u-u+F(u,\p u,\p\p_xu)$, one can find that
\begin{equation*}
\begin{split}
\p_tQ^{0j}&=\cF_0(u,\p u)\p_t^2u+\cF_1(u,\p u,\p\p_xu)+\cF_2(u,\p u,\p\p_xu)\\
&=\cF_1(u,\p u,\p\p_xu)+\cF_2(u,\p u,\p\p_xu),\\
\p_t\sqrt{1+q}&=\frac12(1+q)^{-1/2}(\p_tQ^{jl}+2\p_tQ^{0j}Q^{0l})
\zeta_j\zeta_l\Lambda^{-2}(\zeta)\\
&=(1+q)^{-1/2}(\cF_1(u,\p u,\p\p_xu)+\cF_2(u,\p u,\p\p_xu)),
\end{split}
\end{equation*}
where $\cF_1(0,0,0)=0$, $\cF_1(u,\p u,\p\p_xu)$ is linear in $(u,\p u,\p\p_xu)$ and $\cF_2(u,\p u,\p\p_xu)$ is at least second order of $(u,\p u,\p\p_xu)$.
Therefore,
\begin{equation}\label{B4}
\begin{split}
T_{\p_tQ^{0j}\zeta_j}u&=T_{(\cF_1+\cF_2)\zeta_j\Lambda^{-1}(\zeta)}\Lambda u
+E((\cF_1+\cF_2)\zeta_j,\Lambda^{-1}(\zeta))\Lambda u,\\
T_{\p_t\sqrt{1+q}}\Lambda u&=T_{\cF_1}\Lambda u
+T_{((1+q)^{-1/2}-1)\cF_1+(1+q)^{-1/2}\cF_2}\Lambda u.
\end{split}
\end{equation}
Inserting \eqref{B4} into \eqref{B3} with the fact $E(1,a)f=E(a,1)f=0$ yields \eqref{cU:eqn}.
\end{proof}

\begin{remark}\label{fully:nonl}
Suppose that the nonlinearity \eqref{nonlinear} has the form
\begin{equation*}
F(u,\p u,\p\p_xu)=Q(u,\p u,\p\p_xu)+2\sum_{j,k=1}^dF^{0j0k}\p_{0j}^2u\p_{0k}^2u
+\sum_{j,k,l,m=1}^dF^{jklm}\p_{jk}^2u\p_{lm}^2u+R(u,\p u,\p\p_xu),
\end{equation*}
where $Q(u,\p u,\p\p_xu)$ is quadratic and linear in $\p\p_xu$, $R(u,\p u,\p\p_xu)$ is cubic, $F^{0j0k}=F^{0k0j}$ and $F^{jklm}=F^{lmjk}$.
It only suffices to deal with the quadratic term of $\p\p_xu$ since the higher order term $R(u,\p u,\p\p_xu)$ can be treated similarly.
As in \eqref{B2}, we have
\begin{equation*}
\begin{split}
F(u,\p u,\p\p_xu)&=Q(u,\p u,\p\p_xu)+\sum_{j,k=1}^d
\Big(4F^{0j0k}T_{\p_{0j}^2u}\p_{0k}^2u+2H(\p_{0j}^2u,\p_{0k}^2u)\Big)\\
&\quad+\sum_{j,k,l,m=1}^d\Big(2F^{jklm}T_{\p_{jk}^2u}\p_{lm}^2u
+H(\p_{jk}^2u,\p_{lm}^2u)\Big)+R(u,\p u,\p\p_xu).
\end{split}
\end{equation*}
Therefore, for the fully nonlinear quadratic $F(u,\p u,\p^2u)$ in \eqref{KG},
Theorems \ref{thm1}-\ref{thm2} can be established analogously.
\end{remark}

\vskip 0.5 true cm
{\bf \color{blue}{Conflict of interest}}
\vskip 0.2 true cm

{\bf On behalf of all authors, the corresponding author states that there is no conflict of interest.}

\vskip 0.3 true cm

{\bf \color{blue}{Data availability}}

\vskip 0.2 true cm

{\bf Data sharing is not applicable to this article as no new data were created.}

\end{document}